\newcommand\NoBlackBoxes{\global\overfullrule0pt}
\numberwithin{equation}{section}
\newtheorem{theorem}{Theorem}[section]
\newtheorem{lemma}[theorem]{Lemma}
\newtheorem{corollary}[theorem]{Corollary}
\theoremstyle{remark}
\newcommand{\R}{\mathbb{R}}
\newcommand{\C}{\mathbb{C}}
\newcommand{\T}{\mathbb{T}}
\newcommand{\J}{\mathbb{J}}
\newcommand{\K}{\mathbb{K}}
\newcommand{\Cond}{{\bf (C0)}}
\newcommand{\CondTwo}{{\bf (C1)}}
\newcommand{\CondThree}{{\bf (CG)}}
\newcommand{\X}{{\bf X}}
\newcommand{\U}{{\bf U}}
\newcommand{\A}{{\bf A}}
\newcommand{\EE}{{\bf E}}
\newcommand{\II}{{\bf I}}
\newcommand{\RR}{{\bf R}}
\newcommand{\RS}{{\bf S}}
\newcommand{\RT}{{\bf T}}
\newcommand{\V}{{\bf V}}
\newcommand{\y}{{\bf y}}
\newcommand{\W}{{\bf W}}
\newcommand{\Y}{{\bf Y}}
\newcommand{\ee}{{\bf e}}
\newcommand{\uu}{{\bf u}}
\DeclareMathOperator{\Tr}{Tr}
\DeclareMathOperator{\E}{\mathbb{E}}
\DeclareMathOperator{\Pb}{\mathbb{P}}
\DeclareMathOperator{\one}{\mathds{1}}
\DeclareMathOperator{\re}{Re}
\DeclareMathOperator{\imag}{Im}
\DeclareMathOperator{\supp}{supp}
\begin{document}

\vspace{1in}

\title[Local Semicircle Law]{\bf On the Local Semicircular Law for Wigner Ensembles}

\author[F. G{\"o}tze]{F. G{\"o}tze}
\address{Friedrich G{\"o}tze, Faculty of Mathematics, Bielefeld University
}
\email{goetze@math.uni-bielefeld.de}

\author[A. Naumov]{A. Naumov}
\address{Alexey A. Naumov, Skoltech Institute of Science and Technology; Moscow State University; IITP RAS.
 }
\email{a.naumov@skoltech.ru}

\author[A. Tikhomirov]{A. Tikhomirov}
\address{Alexander N. Tikhomirov, Department of Mathematics, 
Komi Science Center of Ural Division of RAS; Syktyvkar State University
 }
\email{tikhomirov@dm.komisc.ru}

\author[D. Timushev]{D. Timushev}
\address{Dmitry A. Timushev\\
	Department of Mathematics,
	Komi Science Center of Ural Division of RAS
}
\email{timushev@dm.komisc.ru}

\thanks{All authors were supported by CRC 701 “Spectral Structures and Topological Methods in
Mathematics”. A, Naumov, A. Tikhomirov and D. Timushev were supported by RFBR N~14-01-00500 and by Programs of UD RAS, project N~15-16-1-3. A.~Naumov was also supported by Hong Kong RGC GRF~403513, RFBR N~16-31-00005 and President’s of Russian Federation Grant for young scientists N~4596.2016.1 }

\keywords{Random matrices, local semicircle law, Stieltjes transform, rigidity, delocalization, rate of convergence, mean spectral distribution}

\date{\today}

\begin{abstract}
We consider a random symmetric matrix $\X = [X_{jk}]_{j,k=1}^n$ with upper triangular entries being i.i.d. random variables with mean zero and unit variance. We additionally suppose that $\E |X_{11}|^{4 + \delta} =: \mu_{4+\delta} < \infty$ for some $\delta > 0$. The aim of this paper is to significantly extend a  recent result of the authors~\cite{GotzeNauTikh2015a} and show that with high probability the typical distance between the Stieltjes transform of the empirical spectral distribution (ESD) of the matrix $n^{-\frac{1}{2}} \X$ and Wigner's semicircle law is of order $(nv)^{-1} \log n$, where $v$ denotes the distance  to the real line in the complex plane. We apply this result to the rate of convergence of the ESD to the distribution function of the semicircle law  as well as to  rigidity of eigenvalues and eigenvector delocalization significantly extending a recent result by G\"otze, Naumov and Tikhomirov~\cite{GotzeNauTikh2015b}. The result on delocalization is optimal by comparison  with GOE ensembles. Furthermore the techniques of this paper provide a new shorter proof for the optimal $O(n^{-1})$ rate of convergence of the expected ESD to the semicircle law.
\end{abstract}

\maketitle

\section{Introduction and main result}

Let $\X = [X_{jk}]_{j,k=1}^n$ be a random Hermitian matrix with upper triangular entries being independent random variables with mean zero and unit variance. Denote the $n$ eigenvalues of the symmetric matrix $\W: = \frac{1}{\sqrt n} \X$  in the increasing order by
$$
\lambda_1(\W) \le ... \le \lambda_n(\W)
$$
and introduce the eigenvalue counting function $N_I(\W):= |\{1 \le k \le n: \lambda_k(\W) \in I\}|$
for any interval $I \subset \R$, where $|A|$ denotes the number of elements in the set $A$. The pioneering result of E. Wigner~\cite{Wigner1955} states that for any interval $I \subset \R$ of fixed length and independent of $n$
\begin{equation}\label{eq: wigner's semicircle law global regime}
\lim_{n \rightarrow \infty} \frac{1}{n}\E N_I(\W) = \int_I g_{sc}(\lambda) \, d\lambda,
\end{equation}
where
$g_{sc}(\lambda) := \frac{1}{2\pi} \sqrt{(4 - \lambda^2)_{+}}$ and $(x)_{+}: = \max(x, 0)$.  Wigner considered the special case when all $X_{jk}$ take only two values $\pm 1$ with equal probabilities. Later on the result~\eqref{eq: wigner's semicircle law global regime} was called {\it Wigner's semicircle law} and has been extended in various aspects, see, for example,~\cite{Arnold1967},~\cite{Pastur1973},~\cite{Girko1985uspexi},~\cite{GotTikh2006},~\cite{Naumov2013gaussiancase} and~\cite{GotNauTikh2012prob}. For an extensive list of references we refer to the monographs~\cite{AndersonZeit},~\cite{BaiSilv2010} and~\cite{Tao2012}. In what follows we call {\it Wigner's semicircle law or semicircle law} not only a result of type~\eqref{eq: wigner's semicircle law global regime}, but the limiting probability distribution as well.

If an interval $I$ is of fixed length, independent of $n$, it typically contains a  macroscopically large number of eigenvalues, which means a number of order $n$. In this case
we may rewrite~\eqref{eq: wigner's semicircle law global regime} in the following form
\begin{equation}\label{eq: wigner's semicircle law global regime 2}
\frac{1}{n |I|}\E N_I(\W) = \frac{1}{|I|}\int_I g_{sc}(\lambda) \, d\lambda + o\left(\frac{1}{|I|}\right).
\end{equation}
It is of  interest to investigate the case  of smaller intervals where the number of eigenvalues cease to be macroscopically large. In this case the second term on the right hand side of~\eqref{eq: wigner's semicircle law global regime 2} needs to be refined. An appropriate analytical tool for asymptotic approximations is
the Stieltjes transform of the empirical spectral distribution function $F_n(x): = \frac{1}{n} N_{(-\infty, x]}(\W)$, which is given by
$$
m_n(z) := \int_{-\infty}^\infty \frac{d F_n(\lambda)}{\lambda - z}   = \frac{1}{n}\Tr (\W - z \II)^{-1} = \frac{1}{n} \sum_{j=1}^n \frac{1}{\lambda_j(\W) - z},
$$
where $z = u + i v, v >0$. For the imaginary part of $m_n(z)$ we get
$$
\imag m_n(u + i v) = \int_{-\infty}^\infty \frac{v}{(\lambda - u)^2 + v^2} \, d F_n(\lambda) = \frac{1}{v}\int_{-\infty}^\infty \mathcal P\left(\frac{u-\lambda}{v}\right) \, d F_n(\lambda)
$$
which is a kernel density estimator with a Poisson kernel $\mathcal P(x)$  and bandwidth $v$. For a meaningful estimator of the spectral density we cannot allow the distance $v$ to the real line, that is the bandwidth  of the kernel density estimator, to be smaller than the typical $\frac{1}{n}$ -distance between eigenvalues.
Hence, in what follows we shall be mostly interested in the situations when $v \geq \frac{c}{n}, c > 0$, where in some situations $c$ may depend on $n$, growing  for example like $\log n$.

Under rather general conditions one can establish the convergence  of  $m_n(z)$  for {\it fixed} $v > 0$  to the Stieltjes transform of Wigner's semicircle law which is given by
$$
s(z) = \int_{-\infty}^\infty \frac{g_{sc}(\lambda)\, d\lambda}{\lambda-z} = -\frac{z}{2} + \sqrt{\frac{z^2}{4} - 1}.
$$
It is much more difficult  to establish the convergence in the region $1 \geq v \geq \frac{c}{n}$. Significant progress in that direction was recently made in a series of results by L.~Erd{\"o}s, B.~Schlein, H.-T.~Yau and et al., \cite{ErdosSchleinYau2009},~\cite{ErdosSchleinYau2009b} ,~\cite{ErdosSchleinYau2010},~\cite{ErdKnowYauYin2013}, showing
that with high probability uniformly in $u \in \R$
\begin{equation}\label{fluctuations of m_n around s}
|m_n(u+iv) - s(u+iv)| \le \frac{\log^\beta n}{nv}, \quad \beta > 0,  
\end{equation}
which they called  {\it local semicircle law}. It means  that the fluctuations of $m_n(z)$ around $s(z)$ are of order $(nv)^{-1}$ (up to a logarithmic factor). The value of $\beta$  depends on $n$, more precisely on $\beta: = \beta_n = c\log \log n$, where $c>0$ denotes some constant. In in the papers~\cite{ErdosSchleinYau2009},~\cite{ErdosSchleinYau2009b},~\cite{ErdosSchleinYau2010} the inequality ~\eqref{fluctuations of m_n around s}
has been shown  assuming that the distribution of $X_{jk}$ has has sub-exponential tails
 for all $1 \le j, k \le n$. Moreover in~\cite{ErdKnowYauYin2013} this assumption had been relaxed to requiring $\E |X_{jk}|^p \le \mu_p$ for all $p \geq 1$, where $\mu_p$ are some constants. 

Without loss of generality we will assume in what follows that $\X$ is a real symmetric matrix. We say that the conditions $\Cond$ hold if:
\begin{itemize}
	\item \quad $X_{jk}, 1\le j\le k\le n$, are i.i.d. real random variables;
	\item \quad $\E X_{11} = 0, \, \E X_{11}^2 = 1$;
	\item \quad $\E|X_{11}|^{4+\delta} =: \mu_{4+\delta} < \infty$ for some $\delta > 0$.
\end{itemize}    
Our results proven below apply to the case of Hermitian matrices as well. Here we may additionally assume for simplicity that $\re X_{jk}$ and $\imag X_{jk}$ are independent r.v. for all $1 \le j < k \le n$. 
Otherwise  one needs to extend the moment inequalities for linear and quadratic forms in complex r.v. (see~\cite{GotzeNauTikh2015a}[Theorem A.1-A.2]) to the case of dependent real and imaginary parts, the details of which we omit.

The result~\eqref{fluctuations of m_n around s} under the conditions $\Cond$ was proved in a series of papers~\cite{ErdKnowYauYin2013a},~\cite{ErdKnowYauYin2012},~\cite{LeeYin2014} with an $n$-dependent value $\beta = c\log \log n$. In~\cite{GotzeNauTikh2015a} we gave a self-contained proof based on the methods developed in~\cite{GotTikh2015},~\cite{GotzeTikh2014rateofconv} while at the same time reducing the power of $\log n$ from $\beta = c\log \log n$ to $\beta = 2$. Our work and some crucial bounds of our proof were motivated by the methods used in a recent paper of C.~Cacciapuoti, A.~Maltsev and B.~Schlein,~\cite{Schlein2014}, where the authors improved the log-factor dependence in~\eqref{fluctuations of m_n around s} in the sub-Gaussian case. Let $\kappa: = \frac{\delta}{2(4+\delta)}$. In ~\cite{GotzeNauTikh2015a}[Theorem~1.1] it is shown,  assuming conditions $\Cond$, that for  any fixed $V > 0$ there exist positive constants $A_0, A_1$ and $C$ depending on $\delta$ and $V$ such that
\begin{equation}\label{main result part 1 1}
\E |m_n(z) - s(z)|^p \le \left(\frac{Cp^2}{nv}\right)^p,
\end{equation}
for all $1 \le p \le A_1 (nv)^{\kappa}$, $V \geq v \geq A_0 n^{-1}$ and $|u| \le 2+v$. Note   that when stating  that some constant $C$ depends on $\delta$ we actually mean  that it may depends on $\mu_{4+\delta}$ as well, that is $C = C(\delta, \mu_{4+\delta})$. Applying Markov's inequality we may rewrite \eqref{main result part 1 1} in what follows in the following form
\begin{equation}\label{main result part 1 1 probability}
\Pb \left( |m_n(z) - s(z)| \geq \frac{K}{nv}\right) \le \left(\frac{Cp^2}{K}\right)^p,
\end{equation}
for all $1 \le p \le A_1 (nv)^{\kappa}$, $V \geq v \geq A_0 n^{-1}$ and $|u| \le 2+v$. For applications  the range of $v$, such that~\eqref{main result part 1 1} holds for fixed $p$ is 
relevant. It is clear that $V \geq v \geq C p^\frac{1}{\kappa} n^{-1}$.  
Since we are interested in error probabilities which are polynomially small only, it suffices to choose $p$ to be of order $\log n$, which implies that $V \geq v \geq C n^{-1} \log^\frac{1}{\kappa} n$. At the same time the constant $K$ in~\eqref{main result part 1 1 probability} should be of order $\log^2 n$. Comparing with~\eqref{fluctuations of m_n around s} we get $\beta = 2$.

In the region $|u| > 2 + v$ we can control the imaginary part of $m_n(s)$ only. It was proved in~\cite{GotzeNauTikh2015a}[Theorem~1.1] that for any $u_0 > 0$ there exist positive constants $A_0, A_1$ and $C$ depending on $u_0, V$ and $\delta$ such that
\begin{equation}\label{main result part 1 2}
\E |\imag m_n(z) - \imag s(z)|^p \le \left(\frac{Cp^2}{nv}\right)^p,
\end{equation}
for all $1 \le p \le A_1 (nv)^\kappa$, $V \geq v \geq A_0 n^{-1}$ and $|u| \le u_0$.

As mentioned above, we are  interested  in the case when $p$ is of order $\log n$ which implies $V \geq v \geq  n^{-1} \log^\frac{1}{\kappa} n$. This choice yields that in our applications  all bounds will depend on $\log^\frac{1}{\kappa} n$. The power $\kappa^{-1}$, which is independent of $n$,  may be rather large for $\delta$ near zero. The aim of the current paper is to strengthen the results of~\cite{GotzeNauTikh2015a},~\cite{GotzeNauTikh2015b} by proving  bounds of type~\eqref{fluctuations of m_n around s} with $\beta = 1$ while  at the same time showing that $p$ and $v$ may be taken of order $\log n$ and $n^{-1} \log n$ respectively. This is done in Theorem~\ref{th:main} below. To this end  we combine our techniques from~\cite{GotzeNauTikh2015a} with fruitful ideas from~\cite{ErdKnowYauYin2012} and~\cite{LeeYin2014} in particular their  moment matching technique. A crucial  result in that direction is Lemma~\ref{main lemma}. See the sketch of proof below in Section~\ref{Sketch of the proof}. It still remains one challenging open problem, namely extending the bounds to weaken the moment condition to $\delta = 0$.

\subsection{Main result} Let us introduce the following notation
$$
\Lambda_n(z) := m_n(z) - s(z), \quad z = u + i v.
$$
The main result of this paper is the following theorem, which estimates the fluctuations~\eqref{fluctuations of m_n around s} and strengthens~\eqref{main result part 1 1} and~\eqref{main result part 1 2}. 
\begin{theorem}\label{th:main}
Assume that the conditions $\Cond$ hold and let $V > 0$ be some constant. \\
$(i)$ There exist positive constants $A_0, A_1$ and $C$ depending on $V$ and $\delta$  such that
$$
\E |\Lambda_n(z)|^p \le \left(\frac{Cp}{nv}\right)^p,
$$
for all $1 \le p \le A_1 \log n$, $V \geq v \geq A_0 n^{-1} \log n$ and $|u| \le 2+v$. \\

\noindent
$(ii)$ For any $u_0 > 0$ there exist positive constants $A_0, A_1$ and $C$ depending on $u_0, V$ and $\delta$
such that
$$
\E |\imag \Lambda_n(z)|^p \le \left(\frac{Cp}{nv}\right)^p,
$$
for all $1 \le p \le A_1 \log n$, $V \geq v \geq A_0 n^{-1} \log n$ and $|u| \le u_0$.
\end{theorem}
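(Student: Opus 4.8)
The plan is to adapt the self-contained approach of \cite{GotzeNauTikh2015a} for the bound \eqref{main result part 1 1}, combining it with the moment-matching idea from \cite{ErdKnowYauYin2012} and \cite{LeeYin2014} to remove the extra power of $\log n$ and to push the admissible range of $v$ down to $A_0 n^{-1}\log n$. The starting point is the standard self-consistent equation for the Stieltjes transform: writing out the Schur complement / resolvent expansion for the diagonal entries of $(\W - z\II)^{-1}$ one obtains an identity of the form $m_n(z) = -\bigl(z + m_n(z) + \delta_n(z)\bigr)^{-1}$, where $\delta_n(z)$ collects the fluctuating error terms (diagonal entries $X_{jj}/\sqrt n$, the off-diagonal quadratic forms $\tfrac1n\sum_{k,l} X_{jk}X_{jl}([\W^{(j)}-z]^{-1})_{kl} - \tfrac1n\Tr(\W^{(j)}-z)^{-1}$, and the rank-one perturbation correction). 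Because $s(z)$ solves $s(z) = -(z+s(z))^{-1}$ exactly, subtracting and using the stability of this quadratic equation in the bulk and edge (the denominator $|z + 2s(z)| = |s(z) - s^{-1}(z)|$ is bounded below when $|u|\le 2+v$, with the usual degeneration $\sim \sqrt{\kappa_E + v}$ near the edge), one reduces the whole problem to an $L^p$-estimate for $\delta_n(z)$.

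The key steps, in order, would be: (1) establish the a priori crude bound $\Lambda_n(z) = o(1)$ in probability (or in $L^p$) on the relevant range of $v$, bootstrapping from large $v$ where it is classical down to $v \sim n^{-1}\log n$ via a continuity/grid argument in $z$ — this is essentially already in \cite{GotzeNauTikh2015a}; (2) bound the $L^p$-norm of the quadratic-form error $\varepsilon_j := \tfrac1n\sum_{k\ne l}X_{jk}X_{jl}R^{(j)}_{kl}$ using the Rosenthal/Burkholder-type moment inequalities for quadratic forms quoted as Theorem A.1--A.2 in \cite{GotzeNauTikh2015a}; here the naive estimate yields $\E|\varepsilon_j|^p \le (Cp^2/(nv))^p$ because of the $4+\delta$-moment truncation, and the whole gain from $p^2$ to $p$ comes from the moment-matching step; (3) construct a Gaussian (GOE) matrix $\hat\X$ whose entries match the first three moments of $\X_{jk}$ and whose fourth moments differ by $O(n^{-\text{const}})$ after the truncation at level $n^{1/2-\varepsilon}$ of the original entries, and use a Lindeberg-type swapping / resolvent interpolation (derivatives of $m_n$ in the matrix entries, controlled by the a priori bound and by delocalization-type bounds on resolvent entries which themselves follow from the bound on $\imag m_n$) to transfer the sharp $L^p$-bound known for the Gaussian case to $\X$; (4) close the self-consistent inequality: feeding $\|\delta_n\|_p \le C(p/(nv) + \text{(terms quadratic in }\Lambda_n))$ into the stability bound for the quadratic equation and using the a priori smallness of $\Lambda_n$ to absorb the quadratic terms, one gets $\|\Lambda_n\|_p \le Cp/(nv)$ for $p \le A_1\log n$; (5) for part $(ii)$, in the region $2+v < |u| \le u_0$ outside the spectrum one has no lower bound on the full denominator but $\imag s(z)$ itself is comparable to $v$ there, and one repeats the argument working only with the imaginary part, using that $\imag m_n$ is monotone and the relevant self-consistent inequality for $\imag \Lambda_n$ remains stable. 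All of this is asserted to follow the sketch in Section~\ref{Sketch of the proof} and to rely on Lemma~\ref{main lemma}.

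The main obstacle — and the genuinely new content relative to \cite{GotzeNauTikh2015a} — is step (3), the moment-matching transfer done \emph{uniformly} down to $v = A_0 n^{-1}\log n$ and \emph{for $p$ as large as $A_1\log n$ simultaneously}. One must control the interpolation error in the swapping argument: each derivative of the resolvent in a matrix entry produces extra resolvent factors $R^{(j)}_{k\ell}$, whose size is only controlled in an averaged ($L^2$-in-indices) sense via $\imag m_n \sim nv\cdot(\text{entrywise }|R|^2)$, so one needs the a priori bound from step (1) and a careful combinatorial accounting so that the accumulated error after matching three moments is genuinely of smaller order than $p/(nv)$ rather than merely comparable. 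The truncation of heavy-tailed entries at scale $n^{1/2-\varepsilon}$ contributes a further error of order $n^{-\varepsilon(2+\delta)}\mu_{4+\delta}$ per entry which must be shown to be summable against the $p$-th power; getting the thresholds ($\varepsilon$ in terms of $\delta$, and the resulting $A_1$, $\kappa$) to be consistent is the delicate bookkeeping. The stability step (4) and the edge degeneration in the denominator are by now routine given the a priori bound, and part $(ii)$ is a minor variation, so I expect essentially all the difficulty to be concentrated in the quantitative Lindeberg replacement of Lemma~\ref{main lemma}.
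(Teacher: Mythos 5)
Your big-picture outline is essentially the one the paper follows: use the self-consistent quadratic equation $1+zm_n+m_n^2=T_n$ and the stability bound $|\Lambda_n|\le C\min\{|T_n|/|b(z)|,\sqrt{|T_n|}\}$, estimate $\E|T_n|^p$ via the quadratic-form moment inequalities, use a Lindeberg/moment-matching swap to a sub-Gaussian ensemble to push $p\sim\log n$ and $v\sim n^{-1}\log n$ simultaneously, and treat $(ii)$ as a minor variation. You also correctly identify that the entire new content is concentrated in the Lindeberg replacement step (which the paper carries out in Lemma~\ref{lem: bound in the optimal region}, building on the descent Lemma~\ref{lemma: descent} and the sub-Gaussian Lemma~\ref{main lemma gauss}).

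However, the comparison ensemble you propose in step~(3) does not work. You want to interpolate to a \emph{Gaussian} (GOE) matrix matching three moments, with the fourth moments ``differing by $O(n^{-\mathrm{const}})$ after truncation.'' Neither clause holds under $\Cond$. First, a centered Gaussian has $\E Y^3=0$, so it matches only two moments of $X_{jk}$ unless one additionally assumes $\E X_{11}^3=0$, which $\Cond$ does not. If $\E X_{11}^3\neq0$, the third-order Lindeberg term per swap is $\sim n^{-3/2}(\E X^3-\E Y^3)\cdot[(\RT\,\EE^{(a,b)})^3\RT]_{jj}\,\varphi(\RT_{jj})$; the resolvent chain gives only $(nv)^{-1}$ from the two forced off-diagonal factors, so after $n^2/2$ swaps the accumulated error is of order $n^{1/2}(nv)^{-1}C^{p}$, which blows up. Second, truncating $X_{jk}$ at level $n^{1/2-\varepsilon}$ leaves the fourth moment close to $\mu_4$, which is an arbitrary finite number, not close to the Gaussian value $3$; the gap is $O(1)$, not $O(n^{-\mathrm{const}})$. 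The paper sidesteps both issues by constructing, via Lee--Yin's existence lemma (quoted here as Lemma~\ref{lem: random variables}), a \emph{bounded, non-Gaussian} random variable $Y$ with $\E Y^s=\E X^s$ for $s=1,\dots,4$ \emph{exactly}; exact four-moment matching is what makes the terms $\mu=0,\dots,4$ in the resolvent Taylor expansion drop out of the difference (the quantity $\mathcal I(p)$ in Lemma~\ref{lem: bound in the optimal region} is literally identical for $\RR$ and $\RR^{\y}$), leaving only $\mu\ge 5$ contributions that the truncation $|X_{jk}|\le Dn^{\alpha}$, $\alpha<1/2$, makes summable over the $n^2/2$ swaps. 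You should replace ``Gaussian (GOE)'' with this bounded four-moment matching ensemble; once that is done the rest of your plan lines up with the paper's argument.

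One smaller point: you also frame the final step as closing a ``self-consistent inequality'' by absorbing quadratic terms in $\Lambda_n$ into the denominator. The paper does not run this kind of fixed-point absorption; it bounds $\E|T_n|^p$ directly (Theorem~\ref{th: general bound}) in terms of $\E\imag^q\RR_{jj}$, controls the latter via Lemma~\ref{main lemma}, and then plugs into the a priori pointwise stability bound. This is cosmetically different but not a gap.
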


As a consequence of this result we may show that similarly to~\eqref{main result part 1 1 probability} for all $K > 0$
\begin{equation}\label{fluctuations of m_n around s 2}
\Pb\left(|\Lambda_n(z)| \geq \frac{K}{nv}\right) \le \left(\frac{C p}{K}\right)^p,
\end{equation}
valid for all $1 \le p \le A_1 \log n$, $V \geq v \geq A_0 n^{-1} \log n$ and $|u| \le 2+v$. Taking $p$ and $K$ of order $\log n$ we may guarantee that~\eqref{fluctuations of m_n around s 2} is less then, for example, $n^{-2}$. Thus, comparing with \eqref{fluctuations of m_n around s}, we get $\beta = 1$.

Applications of Theorem~\ref{th:main} outside the limit spectral interval, that is for $|u| \geq 2$,  require  stronger bounds on $\imag \Lambda_n$. Let us denote
\begin{equation}\label{eq: def gamma}
\gamma: = \gamma(u):= ||u| - 2|
\end{equation}
and introduce the following quantity 
$$
\alpha: = \alpha(\delta) = \frac{2}{4+\delta},
$$
which will control the level of truncation of the matrix entries. We say that the set of conditions $\CondTwo$ holds if:
\begin{itemize}
	\item \quad $\Cond$ are satisfied;
	\item \quad $|X_{jk}| \le D n^\alpha, 1 \le j,k \le n$, where $D: =D(\alpha)$ denotes some positive constant depending on $\alpha$ only.
\end{itemize}
By definition $\kappa=\delta/(2 (4+\delta))$, hence we may write $\kappa = \frac{1-2\alpha}{2}$.
\begin{theorem}\label{th: stronger bound for imag part}
Assume that the conditions $\CondTwo$ hold and $u_0 > 2$ and $V > 0$. There exist positive constants $A_0, A_1$ and $C$ depending on $\delta, u_0$ and $V$ such that
$$
\E |\imag \Lambda_n(z)|^p \le \frac{C^p p^p }{n^p (\gamma + v)^p} +  \frac{C^p p^{2p}}{(nv)^{2p} (\gamma + v)^\frac{p}{2}}  + \frac{C^p}{n^p v^\frac{p}{2} (\gamma + v)^\frac{p}{2}} + \frac{C^p p^\frac{p}{2}}{(nv)^\frac{3p}{2} (\gamma + v)^\frac{p}{4} },
$$
for all $1 \le p \le A_1 (nv)^{\kappa}$, $V \geq v \geq A_0 n^{-1}$ and $2 \le |u| \le u_0$.
\end{theorem}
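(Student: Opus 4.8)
The plan is to revisit the self-consistent equation argument behind Theorem~\ref{th:main}, keeping careful track of the way all relevant quantities degenerate at the spectral edge; the truncation in $\CondTwo$ is used precisely to control the large moments of the matrix entries that appear once $v$ is pushed down to order $n^{-1}$. Write $R(z):=(\W-z\II)^{-1}$, let $R_{jj}$ be its diagonal entries, let $R^{(j)}(z)$ be the resolvent of the matrix with the $j$-th row and column removed, and set $m_n^{(j)}(z):=\frac1n\Tr R^{(j)}(z)$. By the Schur complement formula $R_{jj}(z)=\bigl(-z-m_n(z)+(m_n(z)-m_n^{(j)}(z))-\varepsilon_j(z)\bigr)^{-1}$, where $\varepsilon_j(z):=\frac1n\sum_{k,l\neq j}X_{jk}X_{jl}R^{(j)}_{kl}(z)-m_n^{(j)}(z)$ is a centred quadratic form. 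Averaging over $j$, expanding $R_{jj}$ around $(-z-m_n)^{-1}$ and subtracting the identity $s^2+zs+1=0$ leads, via $m_n+s+z=\sqrt{z^2-4}+\Lambda_n$, to
$$ \Lambda_n(z)\,\bigl(\sqrt{z^2-4}+\Lambda_n(z)\bigr)=\Delta_n(z), $$
with an explicit error term $\Delta_n$. Since $|\sqrt{z^2-4}|\asymp\sqrt{\gamma+v}$ for $2\le|u|\le u_0$, on the event $\{|\Lambda_n|\le\tfrac12|\sqrt{z^2-4}|\}$ this yields $|\Lambda_n|\lesssim|\Delta_n|/\sqrt{\gamma+v}$ and otherwise $|\Lambda_n|\lesssim|\Delta_n|^{1/2}$; together with the a priori bound $\E|\imag\Lambda_n(z)|^p\le(Cp^2/(nv))^p$ from~\eqref{main result part 1 2}, it is the first alternative that propagates.

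The core task is bounding $\E|\imag\Delta_n|^p$. Decompose $\Delta_n$ into the martingale-type sum $\frac1n\sum_j\widetilde\varepsilon_j/(z+m_n)^2$, with $\widetilde\varepsilon_j:=\varepsilon_j-(m_n-m_n^{(j)})$, plus a quadratic remainder $\frac1n\sum_j\widetilde\varepsilon_j^2R_{jj}/(z+m_n)^2$. The key edge input is the identity $\frac1n\Tr|R^{(j)}(z)|^2=\imag m_n^{(j)}(z)/v$ together with $\imag m_n^{(j)}\le\imag m_n+\frac1{nv}\lesssim\imag s+\imag\Lambda_n+\frac1{nv}$ and $\imag s(z)\asymp v/\sqrt{\gamma+v}$, so that the conditional second moment of each $\varepsilon_j$ is of order $\frac1n\bigl(\frac1{\sqrt{\gamma+v}}+\frac{\imag\Lambda_n}{v}+\frac1{nv}\bigr)$. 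Feeding this into the moment inequalities for linear and quadratic forms of~\cite{GotzeNauTikh2015a}[Theorem~A.1--A.2] applied conditionally on the minors, with the large moments of the truncated entries estimated by $\E|X_{11}|^q\le(Dn^\alpha)^{q-4-\delta}\mu_{4+\delta}$ for $q\ge4+\delta$ (this is where $p\le A_1(nv)^\kappa$, $\kappa=(1-2\alpha)/2$, is needed), using Burkholder's inequality for the martingale part, and using the elementary bound $\frac1n\sum_j|m_n-m_n^{(j)}|\lesssim\frac1n(1+|m_n'(z)|)\lesssim\frac1{n\sqrt{\gamma+v}}$ for the self-correction, one is led to a self-improving inequality of the schematic form
$$ \E|\imag\Lambda_n(z)|^p\le T_1+T_2+T_3+T_4+\theta\,\E|\imag\Lambda_n(z)|^p+(\text{lower powers of }\E|\imag\Lambda_n|), $$
where $T_1,\dots,T_4$ are exactly the four terms in the statement and $\theta=\theta(p,n,v)<1$ in the range $p\le A_1(nv)^\kappa$, $v\ge A_0n^{-1}$ once $A_0$ is large. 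Roughly, $T_3$ is the $p$-free contribution of the Burkholder bracket evaluated with the crude bound $\imag m_n^{(j)}\lesssim1$, $T_1$ comes from the self-correction term, $T_2$ from the quadratic remainder, and $T_4$ from the truncation-driven higher-moment contribution to $\E|\varepsilon_j|^p$.

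The final step is a bootstrap: absorb $\theta\,\E|\imag\Lambda_n|^p$ into the left-hand side, eliminate the lower powers of $\E|\imag\Lambda_n|$ by Young's inequality (checking that the new constants are dominated by $T_1,\dots,T_4$ in the stated range, which again uses $nv\ge A_0$ with $A_0$ large), and iterate finitely many times starting from~\eqref{main result part 1 2}; a short monotonicity-in-$v$ argument, if needed, delivers the claimed range of $v$.

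I expect the main obstacle to be the interplay in the last two steps: the size of $\Delta_n$ is itself governed by $\imag m_n=\imag s+\imag\Lambda_n$, so the estimate is genuinely self-referential, while near the edge the stabilising factor $\sqrt{\gamma+v}$ is small and dividing by it yields only a weak gain. Organising the bookkeeping of the powers of $\gamma+v$, $nv$ and $p$ so that the iteration closes — rather than diverges — uniformly over $v\ge A_0n^{-1}$ and $2\le|u|\le u_0$ is the delicate point, and is the reason the bound appears as a sum of four terms rather than a single one.
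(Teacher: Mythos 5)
Your overall plan is conceptually aligned with the paper: write $\Lambda_n(b(z)+\Lambda_n)=T_n$ (your $\Delta_n$ is the paper's $T_n$), bound $\E|T_n|^p$ via moment inequalities for the $\varepsilon_j$, and feed in the edge asymptotics $|b(z)|\asymp\sqrt{\gamma+v}$, $\imag s(z)\asymp v/\sqrt{\gamma+v}$. The paper, however, does essentially none of the work you propose from scratch: its proof of Theorem~\ref{th: stronger bound for imag part} is a few lines, quoting the already-established Theorem~\ref{th: general bound} for $\E|T_n|^p$, Lemma~\ref{main lemma} for $\E\imag^p\RR_{jj}$, and~\eqref{eq: abs imag lambda main result section} for the reduction $\E|\imag\Lambda_n|^p\le\E|T_n|^p/|b(z)|^p$, and then just substituting the two edge asymptotics. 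So you are re-deriving a general-purpose lemma whose statement already appears verbatim as~\eqref{eq: T n 5 step}.

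The genuine gap is in the reduction step. You propose to pass from $\Lambda_n(b+\Lambda_n)=T_n$ to $|\Lambda_n|\lesssim|T_n|/\sqrt{\gamma+v}$ via the dichotomy on $\{|\Lambda_n|\le\tfrac12|b|\}$ and to ``propagate'' the small branch using the a priori bound~\eqref{main result part 1 2}. But~\eqref{main result part 1 2} controls only $\imag\Lambda_n$, not $|\Lambda_n|$, and for $2\le|u|\le u_0$ with $v$ small (so $|u|>2+v$) the paper has no a priori control of $\re\Lambda_n$. Hence you cannot certify that $|\Lambda_n|\le\tfrac12|b|$ holds with high probability, and the alternative branch estimate $|\Lambda_n|\lesssim|T_n|^{1/2}$ is also not immediate when $|\Lambda_n|$ is close to $|b|$ (then $|b+\Lambda_n|$ may be small while $|T_n|$ is small). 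What makes the paper's argument work is the sharper deterministic inequality~\eqref{eq: abs imag lambda main result section}, taken from~\cite{GotzeNauTikh2015a}[Lemma~B.1] and~\cite{Schlein2014}[Proposition~2.2], which bounds $|\imag\Lambda_n|$ (not $|\Lambda_n|$) by $\min\{|T_n|/|b|,\sqrt{|T_n|}\}$ for all $v>0$ and $|u|\le u_0$ without any smallness hypothesis on $\Lambda_n$. That lemma is the missing ingredient; a continuity-in-$v$ stability argument could in principle substitute for it, but you would then still need some a priori control on $\re\Lambda_n$ to rule out a branch jump, which is not available from~\eqref{main result part 1 2}. Two smaller slips worth flagging: the quantity you actually need to bound is $\E|T_n|^p$, not $\E|\imag T_n|^p$, since the inequality reads $|\imag\Lambda_n|\lesssim|T_n|/|b|$; and the third term in your conditional second moment of $\varepsilon_j$ should be $(nv)^{-2}$, i.e.\ $\frac1n\cdot\frac1{nv^2}$, rather than $\frac1n\cdot\frac1{nv}$.
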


\subsection{Sketch of the proof of Theorem~\ref{th:main} and Theorem~\ref{th: stronger bound for imag part}}\label{Sketch of the proof} The proof of Theorem~\ref{th:main} is similar to the proof~\cite{GotzeNauTikh2015a}[Theorem~1.1]. Applying Lemma~B.1 in~\cite{GotzeNauTikh2015a} see~\eqref{eq: abs value lambda main result section} below and~\cite{Schlein2014}[Proposition~2.2]) it is shown in Section~\ref{proof of the main result} that one may estimate $\E |\Lambda_n(z)|^p$ or $\E |\imag \Lambda_n(z)|^p$ (depending on  $\re(z)$ being near  
or far from the spectral interval $[-2,2]$) 
by the moments $\E |T_n(z)|^p$ (see definition~\eqref{definition of T}). 

To estimate $\E |T_n(z)|^p$ we may repeat all the steps of the proof of Theorem~2.1 in~\cite{GotzeNauTikh2015a} with  one important modification. One of the crucial steps in the proof is the bound $\E |\RR_{jj}(z)|^p \le C_0^p$. It was shown in~\cite{GotzeNauTikh2015a}[Lemma~4.1] that this bound holds for all $V \geq v \geq  A_0 n^{-1}$ and $1 \le p \le A_1 (nv)^\kappa$. Since we have to choose $p$  of order $\log n$ we need to prove bounds in the region $v \geq \tilde v : = n^{-1}\log^\frac{1}{\kappa} n$. In order to close the gap in $v$ from $\tilde v$ to $v_0: = n^{-1} \log n$ with  $p$ being still of order $\log n$ we apply the following strategy. We start from the fact that $\E |\RR_{jj}(z)|^p \le C_0^p \log^{\left(\frac{1}{\kappa}-1\right) p} n$ for all $v \geq v_0$, see Lemma~\ref{lemma: descent}. In order to remove the logarithmic factor from the r.h.s of the last bound we apply ideas  motivated by moment matching techniques used in~\cite{ErdKnowYauYin2012} and~\cite{LeeYin2014}. That is we construct a symmetric random matrix $\Y:= [Y_{jk}]_{jk}^n$ such that $\E X_{jk}^s = \E Y_{jk}^s, 1\le s\le 4$ and $Y_{jk}$ are sub-gaussian random variables, see Lemma~\ref{lem: random variables} (see Lemma~5.1~\cite{LeeYin2014}). Then we show in Lemma~\ref{lem: bound in the optimal region}   that the bound  $\E |\RR_{jj}(z)|^p \le C^p + \E |\RR_{jj}^\y(z)|^p$ still holds for all $v \geq v_0$ and $p$ of order $\log n$, where $\RR^\y$ denotes  $\RR$ with the $\X$ matrix being replaced by the $\Y$ matrix.
Our technique in the proof of Lemma~\ref{lem: bound in the optimal region} is a Stein type method. Finally, using the sub-gaussian properties of $\Y$ we show in Lemma~\ref{main lemma gauss} that $\E |\RR_{jj}^\y(z)|^p \le C_0^p$ for all $v \geq v_0$ and $1 \le p \le c \log n$, for some constant $c$. All these arguments rely on the proof of Lemma~\ref{main lemma}. Note in contrast that in~\cite{LeeYin2014} the moment matching techniques were used to estimate the distance $m_n(z) - s(z)$ directly combined with a combinatorial approach (see, for example, Lemma~5.1 and Lemma~5.2 in~\cite{LeeYin2014}).

The proof of Theorem~\ref{th: stronger bound for imag part} is based on the same arguments as the proof of Theorem~1.2 in~\cite{GotzeNauTikh2015a}. Note that $\E |T_n(z)|^p$ is bounded in terms of $\E \imag^p \RR_{jj}$. The same arguments as for $\E |\RR_{jj}(z)|^p $ will give us the bound $\E \imag^p \RR_{jj} \le H_0^p \imag^p s(z) + H_0^p p^p (nv)^{-p}$ valid for some big constant $H_0 > 0$ and $v \geq v_0, 1 \le p \le c \log n$. Since we can derive explicit bounds for $\imag s(z)$ inside as well as outside the limit spectrum  we
are able to control the size of $\E |T_n(z)|^q$ as well as of $\E |\imag \Lambda_n(z)|^p$  on the whole real line  in terms of the quantity  $\gamma$ (see~\eqref{eq: def gamma}). This is a another key argument for the proof of Theorem~\ref{th: stronger bound for imag part}.

\subsection{Delocalization, rigidity and rate of convergence}\label{sec: applications} 
In this section we present results about delocalization of eigenvectors, rate of convergence of empirical spectral distribution function (ESD) to the semicircle law and rigidity of eigenvalues. These results strengthen the corresponding results in~\cite{GotzeNauTikh2015b}.

We start this section by showing delocalization  of  eigenvectors. This question has been intensively studied in many papers, for example, in~\cite{ErdosSchleinYau2009}~\cite{GotzeTikh2011rateofconv},~\cite{ErdKnowYauYin2013a} and~\cite{ErdKnowYauYin2012}.

Let us denote by $\uu_j := (u_{j 1}, ... , u_{j n})$ the eigenvectors of $\W$ corresponding to the eigenvalue $\lambda_j(\W)$ .
\begin{theorem}\label{th: delocalization}
	Assume that  conditions $\Cond$ hold with  $\delta = 4$. For any $0 < \phi < 2$ there exist positive constants $C$ and $C_1$ depending on $\phi$ and $\mu_8$ such that for any 
	$$
	\Pb \left(\max_{1 \le j, k \le n} |u_{jk}|^2 \geq \frac{C_1 \log n}{n} \right) \le  \frac{C}{n^{2-\phi}}.
	$$
\end{theorem}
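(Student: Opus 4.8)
The plan is to deduce the delocalization estimate from the $\ell^\infty$-control of the diagonal resolvent entries which underlies Theorem~\ref{th:main}, following the classical route of Erd{\"o}s--Schlein--Yau. Write $\RR(z) := (\W - z\II)^{-1}$ and recall the spectral identity
$$
\imag \RR_{kk}(u+iv) = \sum_{l=1}^{n} \frac{v\,|u_{lk}|^2}{(\lambda_l(\W) - u)^2 + v^2},
$$
which, upon taking $u = \lambda_j(\W)$ and discarding all but the $l=j$ summand, yields $|u_{jk}|^2 \le v\,\imag \RR_{kk}(\lambda_j(\W)+iv) \le v\,|\RR_{kk}(\lambda_j(\W)+iv)|$ for every $j,k$ and every $v>0$. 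Fix $v_0 := A_0 n^{-1}\log n$ (the smallest scale at which Theorem~\ref{th:main} operates) and $u_0 := 3$. Then it suffices to show that with probability at least $1 - C n^{-(2-\phi)}$ one has simultaneously $\|\W\| \le u_0$ and $\max_{1\le k\le n}\sup_{|u|\le u_0}|\RR_{kk}(u+iv_0)| \le C_2$: on that event $|u_{jk}|^2 \le v_0 C_2 = C_1 \log n/n$ for all $j,k$.

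The bound on $\max_k\sup_{|u|\le u_0}|\RR_{kk}(u+iv_0)|$ is obtained by a net argument. The construction entering the proof of Theorem~\ref{th:main} (the combination of Lemma~\ref{lem: bound in the optimal region} and Lemma~\ref{main lemma gauss} described in Section~\ref{Sketch of the proof}) provides a constant $C_0 = C_0(u_0,\mu_8)$ with $\E |\RR_{kk}(z)|^p \le C_0^p$ for all $v \ge v_0$, $|u| \le u_0$ and all $1 \le p \le c\log n$; this is the step where the eighth-moment hypothesis ($\delta = 4$) is used, since it is what permits $p$ to reach order $\log n$. By Markov's inequality $\Pb(|\RR_{kk}(u+iv_0)| \ge t) \le (C_0/t)^p$. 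Place a grid $u_1,\dots,u_M$ on $[-u_0,u_0]$ with spacing $v_0^2$, so $M \le C n^2$, and use $|\partial_u \RR_{kk}(u+iv)| = |(\RR^2)_{kk}(u+iv)| \le v^{-2}$ to obtain $\sup_{|u|\le u_0}|\RR_{kk}(u+iv_0)| \le \max_{1\le m\le M}|\RR_{kk}(u_m+iv_0)| + 1$. A union bound over the $M$ grid nodes and the $n$ indices $k$ then gives
$$
\Pb\Big(\max_{1\le k\le n}\sup_{|u|\le u_0}|\RR_{kk}(u+iv_0)| \ge t + 1\Big) \le M n\,(C_0/t)^p \le C n^{3}\,(C_0/t)^p ,
$$
and choosing $p = c_1\log n$ with $c_1 := \min(c, A_1)$ and $t = C_2 := C_0\, e^{(6-\phi)/c_1}$ makes the right-hand side at most $n^{-(2-\phi)}$ for $n$ large, small $n$ being absorbed into $C$.

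For the edge event one must show $\Pb(\|\W\| > u_0) \le C n^{-(2-\phi)}$; under the eighth-moment assumption this is standard, following either from a Bai--Yin type estimate or from Theorem~\ref{th:main}$(ii)$ by the usual reasoning (if $\lambda_j(\W) \notin [-u_0,u_0]$ then $\imag m_n(\lambda_j(\W)+iv_0) \ge (nv_0)^{-1}$ while $\imag s$ at the same point is of strictly smaller order, which contradicts $\E|\imag\Lambda_n(z)|^p \le (Cp/(nv_0))^p$ once the latter is made uniform in $u$ by the same discretization). Combining the two events and relabelling constants completes the proof with $C_1 = A_0(C_2 + 1)$, depending only on $\phi$ and $\mu_8$. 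The only genuinely new ingredient, and the main point of difficulty, is the resolvent bound $\E|\RR_{kk}(z)|^p \le C_0^p$ at $p \asymp \log n$ and $v = v_0 \asymp n^{-1}\log n$: with merely the $(nv)^{\kappa}$-range of exponents available in \cite{GotzeNauTikh2015a} the power $p$ in the union bound would be sub-logarithmic and could not overcome the $\sim n^3$ events produced by the net. Everything else --- the spectral identity, the Lipschitz discretization, and the edge bound --- is classical.
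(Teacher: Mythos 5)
Your argument has the right overall shape — the spectral identity $|u_{jk}|^2 \le v\,\imag\RR_{kk}(\lambda_j(\W)+iv)$, a net in $u$, a high-moment Markov bound with $p \asymp \log n$, and a separate edge estimate — and this is essentially the structure the paper uses. But there is a genuine gap: you invoke the moment bound $\E|\RR_{kk}(z)|^p \le C_0^p$ for $p \le c\log n$ directly for the matrix $\W$, attributing it to "the construction entering the proof of Theorem~\ref{th:main}." That moment bound is Lemma~\ref{main lemma}, and it is proved only under $\CondTwo$, i.e.\ after truncating the entries to $|X_{jk}| \le Dn^{\alpha}$. Under the mere hypothesis $\Cond$ with $\delta = 4$ the entries are unbounded, and one cannot apply the lemma to $\RR_{kk}$ as is. The paper's proof therefore devotes most of its length to the truncation chain $\W \to \widehat\W \to \widetilde\W \to \breve\W$ (truncate, recenter, renormalize), controlling $|\RR_{jj}|$ by $3|\widetilde\RR_{jj}|$ and then by $3|\breve\RR_{jj}|$, so that Lemma~\ref{main lemma} is finally applied to $\breve\RR$ whose matrix \emph{does} satisfy $\CondTwo$. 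This chain is not a formality: the event $\{\W \neq \widehat\W\}$ has probability only $O(n^{-(2-\phi)})$ under $\mu_8 < \infty$, and this is precisely where the rate $n^{-(2-\phi)}$ and the parameter $\phi$ come from. Your alternative derivation of that rate — tuning $t = C_0 e^{(6-\phi)/c_1}$ inside the net argument — is spurious; if the moment bound were available unconditionally you could get any polynomial rate from the net and would not need $\phi$ at all.

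A related misreading: you write that the eighth-moment hypothesis is "what permits $p$ to reach order $\log n$." That is not so. Lemma~\ref{main lemma} gives the range $1 \le p \le A_1\log n$ for every $\delta > 0$ once $\CondTwo$ holds; the role of $\delta = 4$ is solely to make the truncation event small enough, namely $\Pb(\W \neq \widehat\W) \le Cn^{-(2-\phi)}$, as the paper's remark following Theorem~\ref{th: delocalization} makes explicit (for $0 < \delta < 4$ one only gets $n^{-c(\delta)}$). A complete proof along your lines must insert the truncation and recentering step before invoking the resolvent moment bound, and then the $n^{-(2-\phi)}$ bound falls out of that step rather than from the net tuning. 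The remaining pieces of your argument — the spectral identity, the Lipschitz discretization with derivative bound $v^{-2}$, the union bound, and the treatment of the edge via a norm estimate — are correct and match the paper's proof in spirit.
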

We remark here that it is possible to relax the moment conditions to  the case $0 < \delta < 4$ as well. But here  we may only conclude that there exists some constant $c(\delta) > 0$ depending on $\delta$ such that
$$
\Pb \left(\max_{1 \le j, k \le n} |u_{jk}|^2 \geq \frac{C_1 \log n}{n} \right) \le  \frac{C}{n^{c(\delta)}}.
$$
A comparison with a similar result for the GOE ensemble (see~\cite{AndersonZeit}[Corollary 2.5.4]) 
and the delocalization of eigenvectors  of the unit sphere shows that this result is optimal with respect to  the power of logarithm. It is not clear though whether it is still possible to strengthen the probability bounds above. The numerical calculations in Section~5 of~\cite{GotzeNauTikh2015b} strongly suggest that the actual probability bounds should be very poor. The proof of this theorem is similar to the proof of Theorem~1.4 in~\cite{GotzeNauTikh2015b}, but since this result is optimal we present it here. It is given in Section~\ref{sec: delocalization}. The proof is based on Lemma~\ref{main lemma}. 

The results on rate of convergence and the rigidity of eigenvalues are based on Theorem~\ref{th:main} and Theorem~\ref{th: stronger bound for imag part}. We first investigate the rate of convergence in probability of ESD to the distribution function of Wigner's semicircle law defined as follows $G_{sc}(x): = \int_{-\infty}^x g_{sc}(\lambda) \, d\lambda$. To measure the distance between distribution functions we introduce the uniform distance
$$
\Delta_n^{*}: = \sup_{x \in \R} |F_n(x) - G_{sc}(x)|.
$$
At this point we omit a detailed discussion of  previous results and refer the reader instead to~\cite{GotzeNauTikh2015b}, which provides   links to the  related papers~\cite{GotTikh2003},~\cite{BaiHuPanZhou2011},~\cite{GotzeTikh2011rateofconv},~\cite{GotzeTikh2014rateofconv} and~\cite{TaoVu2013}. We prove the following theorem.
\begin{theorem} \label{th: rate of convergence}
Assume that the conditions $\Cond$ hold. Then there exist positive constants  $c$ and $C$ depending on $\delta$ such that for all $1 \le p \le c \log n$
$$
\Pb\left(\Delta_n^{*} \geq K \right) \le \frac{C^p \log^{2p} n}{K^p n^p}
$$
for all $K > 0$.
\end{theorem}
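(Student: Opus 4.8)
The plan is to reduce the uniform distance $\Delta_n^{*}$ to estimates on $\Lambda_n(z)=m_n(z)-s(z)$ along horizontal lines in the upper half plane by means of a Bai-type smoothing inequality (in the form used in \cite{GotzeTikh2014rateofconv} and \cite{GotzeNauTikh2015b}), and then to insert the moment bounds of Theorem~\ref{th:main} and Theorem~\ref{th: stronger bound for imag part}. Concretely, with $v_0:=A_0 n^{-1}\log n$ and a fixed $V>0$, the smoothing inequality produces a bound of the shape
$$
\Delta_n^{*}\ \le\ C_1\int_{|u|\le \varkappa}|\Lambda_n(u+iV)|\,du\ +\ C_2\sup_{|u|\le \varkappa}\left|\int_{v_0}^{V}\Lambda_n(u+iy)\,dy\right|\ +\ C_3\,v_0\log\frac{1}{v_0},
$$
where $\varkappa>2$ is a fixed constant, where in the region $|u|>2$ the integrand $\Lambda_n$ is replaced by $\imag\Lambda_n$ (the only quantity controlled there), and where the deterministic remainder $v_0\log(1/v_0)\asymp n^{-1}\log^{2}n$ is already of the asserted order. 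It is at this point that the strength of Theorem~\ref{th:main}, namely its validity down to $v=v_0=A_0 n^{-1}\log n$ with exponents $p$ up to $A_1\log n$, is exactly matched to what the smoothing inequality requires. Preliminarily, I would truncate the matrix entries at level $Dn^{\alpha}$: since $\mu_{4+\delta}<\infty$, the truncated, recentred and rescaled matrix satisfies $\CondTwo$, the truncation changes $\Delta_n^{*}$ by at most $O(n^{-1})$ with overwhelming probability, and this makes Theorem~\ref{th: stronger bound for imag part} applicable.

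For the first term, Theorem~\ref{th:main}$(i)$ gives $\E|\Lambda_n(u+iV)|^{p}\le (Cp/(nV))^{p}$, so by Minkowski's integral inequality over the bounded $u$-interval $\E\big(\int_{|u|\le\varkappa}|\Lambda_n(u+iV)|\,du\big)^{p}\le(C'p/n)^{p}$, which for $1\le p\le c\log n$ is already $\le(C\log n/n)^{p}$. For the vertical integral in the bulk, say $|u|\le 2+v_0$, Theorem~\ref{th:main}$(i)$ applied at each height together with Minkowski's integral inequality in $y$ yields
$$
\left(\E\left|\int_{v_0}^{V}\Lambda_n(u+iy)\,dy\right|^{p}\right)^{1/p}\le\int_{v_0}^{V}\bigl(\E|\Lambda_n(u+iy)|^{p}\bigr)^{1/p}\,dy\le\int_{v_0}^{V}\frac{Cp}{ny}\,dy=\frac{Cp}{n}\log\frac{V}{v_0}\le\frac{Cp\log n}{n},
$$
using $v_0=A_0 n^{-1}\log n$; for $p$ of order $\log n$ this is $\le C\log^{2}n/n$. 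In the region $2\le|u|\le\varkappa$ the same computation with $\imag\Lambda_n$ in place of $\Lambda_n$, controlled by Theorem~\ref{th:main}$(ii)$ — and by the sharper edge bound of Theorem~\ref{th: stronger bound for imag part} wherever the smoothing inequality forces a finer dependence on the edge distance $\gamma$ of \eqref{eq: def gamma}, the extra factors $(\gamma+y)^{-1}$ only helping because $\imag s(u+iy)\asymp y/\sqrt{\gamma+y}$ is small there — again gives a bound of order $p\,n^{-1}\log n\le C\log^{2}n/n$.

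It remains to pass from these pointwise-in-$u$ moment bounds to a bound on the supremum over $u$. I would discretise $u$ on a grid of mesh $n^{-5}$ inside $|u|\le\varkappa$; since $|\partial_u m_n(u+iy)|\le v_0^{-2}$ and $|\partial_u s(u+iy)|$ is bounded by a fixed power of $n$ for $y\ge v_0$, replacing the supremum by the maximum over the $O(n^{6})$ grid points introduces only an error $O(n^{-3})$. A union bound then gives $\E\bigl(\sup_{|u|\le\varkappa}|\cdots|\bigr)^{p}\le n^{6}\,(Cp\log n/n)^{p}$, and since we evaluate at $p=c\log n$ the polynomial factor $n^{6}=(e^{6/c})^{p}$ is absorbed into an enlarged constant $C$. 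Hence $\E(\Delta_n^{*})^{p}\le(C\log^{2}n/n)^{p}$ at $p=c\log n$, and by Lyapunov's inequality (monotonicity of $L^{p}$-norms) the same bound holds for every $1\le p\le c\log n$. Markov's inequality then gives $\Pb(\Delta_n^{*}\ge K)\le(C\log^{2}n/(Kn))^{p}=C^{p}\log^{2p}n/(K^{p}n^{p})$ for all such $p$ and all $K>0$ (the bound being trivial when $K<C\log^{2}n/n$), which is the assertion.

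The step I expect to be the main obstacle is the analysis near the spectral edges $|u|=2$. There one has access only to $\imag\Lambda_n$, so one must verify that the vertical integral $\int_{v_0}^{V}\imag\Lambda_n(u+iy)\,dy$ attains the target order $n^{-1}\log^{2}n$ uniformly in $\gamma$; this is exactly the reason for invoking the refined estimate of Theorem~\ref{th: stronger bound for imag part} together with the explicit behaviour $\imag s(u+iy)\asymp y/\sqrt{\gamma+y}$, and it is here that the compatibility of the optimal scale $v_0=A_0 n^{-1}\log n$ from Theorem~\ref{th:main} with the smoothing inequality — the whole point of the improvement over \cite{GotzeNauTikh2015b} — must be checked with care.
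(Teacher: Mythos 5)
Your overall template — smoothing inequality, $L^{p}$ bounds from Theorem~\ref{th:main} at scale $v_0\asymp n^{-1}\log n$, discretisation plus Newton--Leibniz to take a supremum, then Markov with $p\asymp\log n$ so that polynomial grid factors $n^{O(1)}=(e^{O(1)/c})^{p}$ are absorbed — matches the paper's strategy, and the bookkeeping for the bulk vertical integral and for absorbing $k_n^{1/p}$ is correct. But the smoothing inequality you invoke is not the one the paper uses, and that difference is not cosmetic.

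The paper's Lemma~\ref{l: bound for delta} (from~\cite{GotTikh2003}) takes the supremum only over $x\in\mathbb{J}_\varepsilon'=\{x\in[-2,2]:\gamma(x)\geq\varepsilon/2\}$, i.e.\ strictly \emph{inside} the bulk, with a variable lower limit $v'(x)=v_0/\sqrt{\gamma(x)}$ in the vertical integral, and pays for this with an explicit deterministic remainder $C_2v_0+C_3\varepsilon^{3/2}$; choosing $\varepsilon\asymp v_0^{2/3}$ makes both of order $v_0\asymp n^{-1}\log n$. Because $\mathbb{J}_\varepsilon'\subset(-2,2)$, one never leaves the region where Theorem~\ref{th:main}$(i)$ controls the full $\Lambda_n$, and Theorem~\ref{th: stronger bound for imag part} is never used in the proof of Theorem~\ref{th: rate of convergence} (it is needed only for the edge case of the rigidity Theorem~\ref{th: rigidity}$(ii)$). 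Your version of the inequality instead takes $\sup_{|u|\le\varkappa}$ with $\varkappa>2$, a fixed lower limit $v_0$, a $v_0\log(1/v_0)$ remainder, and postulates that $\Lambda_n$ may be ``replaced by $\imag\Lambda_n$'' for $|u|>2$. That replacement is exactly the step you would need to justify, and it is not present in~\cite{GotTikh2003} or~\cite{GotzeTikh2014rateofconv}: a Bai-type smoothing inequality relates the Kolmogorov distance to the full Stieltjes transform difference, and there is no a priori reason the imaginary part alone suffices outside $[-2,2]$ without a separate argument (the paper's Lemma~B.1 inequality~\eqref{eq: abs imag lambda main result section} relates $|\imag\Lambda_n|$ to $T_n$, not $\Delta_n^*$ to $\imag\Lambda_n$). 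Your ``main obstacle'' near the edge is therefore an artifact of having chosen the wrong form of the smoothing inequality; the paper's choice dissolves the obstacle rather than confronting it.

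A second, smaller discrepancy: for the horizontal integral the paper integrates over \emph{all} of $\mathbb{R}$ and relies on the decay $\E^{1/p}|\Lambda_n(u+iV)|^p\le Cp|s(z)|^{(p+1)/p}/n$ (cited as inequality~2.8 of~\cite{GotzeNauTikh2015b}), whereas you cap the integral at $|u|\le\varkappa$. This is consistent with your alternative smoothing inequality but again not with Lemma~\ref{l: bound for delta}, whose first term is an integral over the whole real line. Finally, the paper does not perform an explicit truncation to $\CondTwo$ inside this proof; Theorem~\ref{th:main} is stated under $\Cond$ and already has the truncation built in, so your preliminary truncation step is redundant (though harmless). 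In short: the reduction pattern and the $L^p$/grid machinery are right, but you should replace your ad hoc smoothing inequality with Corollary~\ref{cor: bound for delta} and observe that the restriction to $\mathbb{J}_\varepsilon'$ makes the whole argument live inside the bulk, with Theorem~\ref{th:main}$(i)$ alone.
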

Let $N[x - \frac{\xi}{2n}, x + \frac{\xi}{2n} ]: = N_I(\W)$ for $I =  [x - \frac{\xi}{2n}, x + \frac{\xi}{2n} ], \xi > 0$. The following result is the direct corollary of Theorem~\ref{th: rate of convergence}.
\begin{corollary}
Assume that  conditions $\Cond$ hold. Then there exist positive constants  $c$ and $C$ depending on $\delta$ such that for all $1 \le p \le c\log n$ and all $\xi > 0, K > 0$
\begin{equation}\label{rigidity}
\Pb\left(\left | \frac{N[x - \frac{\xi}{2n}, x + \frac{\xi}{2n} ]}{\xi} - g_{sc}(x) \right | \geq \frac{K}{\xi} \right) \le \frac{C^p \log^{2p} n}{K^p n^p}.
\end{equation}
\end{corollary}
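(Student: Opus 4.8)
The plan is to deduce this directly from Theorem~\ref{th: rate of convergence}, by rewriting the local eigenvalue count through the empirical distribution function $F_n$ and then estimating the purely deterministic discrepancy between the local average of $g_{sc}$ over the window and its value $g_{sc}(x)$ at the centre.

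Set $a := x - \tfrac{\xi}{2n}$ and $b := x + \tfrac{\xi}{2n}$, so that (up to a $\Pb$-null event) $N[a,b] = n\bigl(F_n(b) - F_n(a)\bigr)$ and $\xi = n(b-a)$. First I would write
\begin{align*}
\left|\frac{N[a,b]}{\xi} - g_{sc}(x)\right|
&\le \frac{n}{\xi}\,\Bigl|\bigl(F_n(b)-G_{sc}(b)\bigr) - \bigl(F_n(a)-G_{sc}(a)\bigr)\Bigr|
 + \left|\frac{1}{b-a}\int_a^b g_{sc}(\lambda)\,d\lambda - g_{sc}(x)\right| \\
&\le \frac{2n}{\xi}\,\Delta_n^{*} + R_n ,
\end{align*}
where $R_n := R_n(x,\xi)$ denotes the deterministic remainder in the second summand. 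The random part is already under control: $\{\tfrac{2n}{\xi}\Delta_n^{*} \ge t\} = \{\Delta_n^{*} \ge \tfrac{\xi t}{2n}\}$, so Theorem~\ref{th: rate of convergence} applies verbatim with threshold $\tfrac{\xi t}{2n}$ and the same range $1 \le p \le c\log n$.

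Next I would bound $R_n$. Because $x$ is the midpoint of $[a,b]$, the midpoint–rule error gives $R_n \le C(b-a)^2\sup_{[a,b]}|g_{sc}''| = C\xi^2 n^{-2}$ whenever $x$ stays a fixed distance away from $\pm 2$; near the spectral edge, where $g_{sc}$ is only $\tfrac12$-Hölder, the elementary bound $|\sqrt{s}-\sqrt{t}| \le \sqrt{|s-t|}$ yields $|g_{sc}(\lambda) - g_{sc}(x)| \le C\sqrt{|\lambda - x|}$ on bounded sets, hence $R_n \le C(\xi/n)^{1/2}$ uniformly. In the range of $(\xi,K)$ for which the asserted inequality is not vacuous one checks that $R_n \le \tfrac{K}{2\xi}$, so that $\{|\frac{N[a,b]}{\xi} - g_{sc}(x)| \ge \tfrac{K}{\xi}\} \subset \{\Delta_n^{*} \ge \tfrac{K}{4n}\}$; Theorem~\ref{th: rate of convergence} applied with threshold $K/(4n)$, the factor $4^p$ absorbed into $C$, then gives the claim, while in the remaining range the bound is trivial since its right-hand side is at least $1$.

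The step I expect to be the main obstacle is the uniform control of $R_n$: in the bulk it is a harmless $O(\xi^2/n^2)$ midpoint-rule error, but at the spectral edges $\pm 2$ the density $g_{sc}$ loses $C^2$ regularity and $R_n$ only decays like $(\xi/n)^{1/2}$, so the clean estimate survives precisely in the regime where this deterministic term is dominated by $K/\xi$ — outside that regime one should not expect $\frac{N[a,b]}{\xi}$ to be close to $g_{sc}(x)$ in the first place. Everything else is a routine rewriting together with a single invocation of Theorem~\ref{th: rate of convergence}.
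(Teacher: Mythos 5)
Your decomposition into the random fluctuation $\frac{2n}{\xi}\Delta_n^*$ and a deterministic midpoint-rule bias $R_n$ is the natural route, and the paper itself treats this statement as a direct consequence of Theorem~\ref{th: rate of convergence} without giving a proof. But the final step of your argument contains an arithmetic error that you did not catch, and it matters. Applying Theorem~\ref{th: rate of convergence} with threshold $K/(4n)$ gives
\begin{equation*}
\Pb\Bigl(\Delta_n^* \geq \tfrac{K}{4n}\Bigr) \leq \frac{C^p\log^{2p}n}{\bigl(K/(4n)\bigr)^p\,n^p} = \frac{(4C)^p\log^{2p}n}{K^p},
\end{equation*}
because the two factors $n^p$ cancel. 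Your route therefore produces a right-hand side of order $\log^{2p}n/K^p$, \emph{not} $\log^{2p}n/(K^p n^p)$; it is weaker than the printed bound by a factor $n^p$, which cannot be absorbed into a constant. In fact the printed bound is too strong to be literally correct: take $x=0$, $\xi=1$, $K=\tfrac12$; then $\{|N - g_{sc}(0)|\ge\tfrac12\}$ occurs whenever $N\ge1$ (since $N$ is a non-negative integer and $g_{sc}(0)=1/\pi$), which happens with probability bounded away from zero, whereas $\bigl(C\log^2 n/n\bigr)^p\to 0$ for $p\asymp\log n$. The bound that genuinely follows from Theorem~\ref{th: rate of convergence} by your rescaling has $K^p$, not $K^pn^p$, in the denominator; the extra $n^p$ in the statement looks like a misprint. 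You should have flagged this mismatch instead of asserting that your computation ``gives the claim.''

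A secondary issue is your assertion that $R_n\le K/(2\xi)$ ``in the range of $(\xi,K)$ for which the asserted inequality is not vacuous.'' In the bulk $R_n\asymp\xi^2/n^2$, so $\xi R_n\le K/2$ requires $\xi^3\lesssim K n^2$. Once the right-hand side is corrected to $\log^{2p}n/K^p$, non-vacuity only gives $K\gtrsim\log^2 n$, so the bias is negligible for $\xi\lesssim n^{2/3}\log^{2/3}n$, but it is not controlled for arbitrary $\xi>0$. A cleaner formulation, under which the proof really is a one-line rescaling of Theorem~\ref{th: rate of convergence}, replaces $\xi g_{sc}(x)$ by $n\bigl(G_{sc}(x+\tfrac{\xi}{2n})-G_{sc}(x-\tfrac{\xi}{2n})\bigr)$, in which case $R_n$ vanishes identically and only the $\Delta_n^*$ term is left.
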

Taking $K = C_1 n^{-1}\log^2 n$ with $C_1 = C e^{2/c}$ (one may of course take a larger constant)  we get that
\begin{equation}\label{eq: rate of convergence optimal}
\Pb\left(\Delta_n^{*} \geq \frac{C_1 \log^2 n}{n} \right) \le  \frac{1}{n^2}.
\end{equation}
The proof of Theorem~\ref{th: rate of convergence} will be given in Section~\ref{sec: rate of convergence}. We believe that it is still possible to reduce the power of $\log$ in \eqref{rigidity} from $2p$ to $p$ or even $\frac{p}{2}$, which would be optimal  due to the result of Gustavsson~\cite{Gustavsson2005} for the  {\it Gaussian Unitary Ensembles} (GUE). 

Instead of $\Delta_n^{*}$ one may study the following distance of the {\it mean} spectral distribution to its limit
$$
\Delta_n : = \sup_{x \in \R} |\E F_n(x) - G_{sc}(x)|.
$$
The first estimate of $\Delta_n$ was obtained by Z. Bai~\cite{Bai1993}, who showed  the bound $\Delta_n = O(n^{-\frac14})$ assuming $\mu_4<\infty$. Already in 1998 Girko~\cite{Girko1998} published an error bound  order $O(n^{-\frac12})$ under the same moment conditions. In 2002 \cite{Girko2002} he closed gaps in his proof. The same result was independently obtained by Bai and et. al.~\cite{BaiMiaTsay2002} and G\"otze, Tikhomirov~\cite{GotTikh2003}. Girko~\cite{Girko1998}  claimed that the actual rate of convergence of the expected spectral distribution function to the semi-circle law is $O(n^{-1/2})$ even in the Gaussian case. In 2002 G\"otze and Tikhomirov~\cite{GotzTikh2002} showed that for the GUE the rate of convergence to the semi-circle law is $O(n^{-2/3})$. In 2005 \cite{GotzTikh2005} they improved this bound to $O(n^{-1})$. In 2007 in~\cite{TimushevTikhKhol2007} the bound $\Delta_n=O(n^{-1})$ was obtained for the {\it Gaussian Orthogonal Ensemble} (GOE) as well.  Up to this point is was not clear what the optimal rate of convergence to the semi-circular law under weak moment conditions only should be. It follows from~\cite{BobGotTikh2010} that if the distributions of the matrix entries satisfy a Poincar\'e type inequality then $\Delta_n=O(n^{-\frac23})$. Recently G\"otze and Tikhomirov~\cite{GotTikh2015} proved the bound $\Delta_n=O(n^{-1})$ assuming that $\mu_8<\infty$ or $\mu_4<\infty$ combined with the assumption  $|X_{jk}|\le Cn^{\frac14}$ a.s. Finally in~\cite{GotTikh2015weak} the following theorem was proved
\begin{theorem}\label{th: rate of conv expected}
Assume that the conditions $\Cond$ hold. There exist a positive constant $C(\delta)$ depending on $\delta$ such that
$$
\Delta_n \le \frac{C(\delta)}{n}.
$$
\end{theorem}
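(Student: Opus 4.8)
The plan is to derive the theorem from a Stieltjes‑transform smoothing inequality combined with a self‑consistent equation for the \emph{expected} Stieltjes transform $\E m_n(z)$. The key point is that $\E m_n(z)-s(z)$ is a pure bias quantity: the $(nv)^{-1}$‑size fluctuation of $m_n-s$ is annihilated by the expectation, so $\E m_n(z)-s(z)$ should be of order $n^{-1}$ away from the edges $u=\pm2$, with only an integrable, $(\gamma+v)^{-1}$‑type singularity as $|u|\to2$ (with $\gamma=\gamma(u)$ as in \eqref{eq: def gamma}). Concretely, fix a constant $V>0$ and set $v_0:=A_0n^{-1}\log n$. A smoothing inequality of the type used for optimal rates in \cite{GotzTikh2005,GotzeTikh2014rateofconv} has the form
\[
\Delta_n\ \le\ C\int_{\mathbb R}\bigl|\E m_n(u+iV)-s(u+iV)\bigr|\,du\;+\;C\int_{v_0}^{V}\!\!\int_{\mathbb R}\bigl|\imag\bigl(\E m_n(u+iy)-s(u+iy)\bigr)\bigr|\,du\,dy\;+\;Cv_0 .
\]
Hence it suffices to prove, for $V\ge v\ge v_0$ and $|u|\le u_0$, the pointwise estimates $|\E m_n(z)-s(z)|\le Cn^{-1}(1+(\gamma+v)^{-1})$ and the analogous bound for $\imag(\E m_n(z)-s(z))$. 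Indeed, at $v=V$ the first integral is $O(n^{-1})$; in the double integral the inner $u$‑integral is $\le Cn^{-1}\log(1/y)$ (since $\int_{|u|\le u_0}(\gamma(u)+y)^{-1}\,du=O(\log(1/y))$), and $\int_{v_0}^{V}\log(1/y)\,dy=O(1)$, so the whole double integral is still $O(n^{-1})$; the range $2\le|u|\le u_0$ is handled using the sharper imaginary‑part estimate of Theorem~\ref{th: stronger bound for imag part}, which is small outside the bulk.

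The self‑consistent equation is obtained as follows. With $\RR=(\W-z\II)^{-1}$, $\RR^{(j)}$ the resolvent of the minor with row/column $j$ removed, $\mathbf x_j=(X_{jk})_{k\ne j}$, the Schur complement gives $\RR_{jj}^{-1}=\W_{jj}-z-n^{-1}\mathbf x_j^{\top}\RR^{(j)}\mathbf x_j$. Put $\varepsilon_j:=n^{-1}\mathbf x_j^{\top}\RR^{(j)}\mathbf x_j-n^{-1}\Tr\RR^{(j)}$, $\eta_j:=\W_{jj}-\varepsilon_j$ (so $\E[\eta_j\mid\mathcal F^{(j)}]=0$, where $\mathcal F^{(j)}=\sigma(\RR^{(j)})$), and $\omega_j:=\Tr\RR-\Tr\RR^{(j)}=\RR_{jj}(1+n^{-1}\mathbf x_j^{\top}(\RR^{(j)})^2\mathbf x_j)$. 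Then $\RR_{jj}(z+m_n)=\RR_{jj}\eta_j+n^{-1}\RR_{jj}\omega_j-1$; summing over $j$, dividing by $n$, taking expectations, and subtracting $s^2+zs+1=0$ yields
\[
\bigl(\E m_n-s\bigr)\bigl(2s+z+(\E m_n-s)\bigr)\ =\ -\mathrm{Var}(m_n)\;+\;\frac1n\sum_{j=1}^{n}\E[\RR_{jj}\eta_j]\;+\;\frac1{n^2}\sum_{j=1}^{n}\E[\RR_{jj}\omega_j],
\]
where $(2s+z)^2=z^2-4$, so $|2s+z|\ge c$ in the bulk but $|2s+z|\asymp(\gamma+v)^{1/2}$ near $u=\pm2$.

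Next I would estimate the right‑hand side. Expand $\RR_{jj}=-\beta_j^{-1}\sum_{r\ge0}(\eta_j/\beta_j)^r$ with $\beta_j:=z+n^{-1}\Tr\RR^{(j)}$ ($\mathcal F^{(j)}$‑measurable, close to $z+s=-1/s$). Taking conditional expectations, the $r=0$ term drops and the $r=1$ term gives $-\beta_j^{-1}\E[\eta_j^{2}\mid\mathcal F^{(j)}]$ with $\E[\eta_j^{2}\mid\mathcal F^{(j)}]=n^{-1}+n^{-2}((\mu_4-3)\sum_k(\RR^{(j)}_{kk})^2+2\Tr(\RR^{(j)})^2)=O(n^{-1}(1+|s'(z)|))$. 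The remaining contributions — the terms $r\ge2$, the replacement of $\beta_j^{-1}$ by $-s$, and $n^{-2}\sum_j\E[\RR_{jj}\omega_j]\approx n^{-1}s^2(1+s')$ — are controlled by the moment estimates $\E|\RR_{jj}(z)|^p\le C_0^p$ and $\E(\imag\RR_{jj}(z))^p\le C^p(\imag s(z))^p+C^p(p/(nv))^p$ (Lemma~\ref{main lemma} and the bounds feeding Theorem~\ref{th: stronger bound for imag part}, valid for $v\ge v_0$, $1\le p\le c\log n$), while $\mathrm{Var}(m_n(z))\le\E|m_n(z)-s(z)|^2\le C(nv)^{-2}$ by Theorem~\ref{th:main}. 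Since $|s'(z)|\asymp(\gamma+v)^{-1/2}$, this gives $|\text{r.h.s.}|\le Cn^{-1}(1+(\gamma+v)^{-1/2})+C(nv)^{-2}$. Finally, treat the displayed identity as a quadratic equation in $\E m_n-s$ and take the root tending to $0$: in the bulk $|\E m_n-s|\le C|\text{r.h.s.}|\le Cn^{-1}$, while near and outside the edges, after tracking the phase and using that the three $s'$‑contributions combine, one gets $|\E m_n-s|\le Cn^{-1}(\gamma+v)^{-1}$ (and likewise for $\imag(\E m_n-s)$); inserting this into the smoothing inequality completes the proof.

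The main obstacle is uniform control near the spectral edges $u=\pm2$: there the stability factor $2s+z$ degenerates like $(\gamma+v)^{1/2}$, so the error terms must be estimated with the precise $(\gamma+v)$‑dependence — this is exactly why one needs the refined imaginary‑part estimates of Theorem~\ref{th: stronger bound for imag part}, built on Lemma~\ref{main lemma} and valid down to $v\gtrsim n^{-1}$ — and one must verify that after integration in $u$ the resulting singularities contribute only $O(n^{-1})$ and not $O(n^{-1}\log n)$. The second delicate point is the precise bookkeeping of constants in $\E[\eta_j^2\mid\mathcal F^{(j)}]$, in $n^{-2}\sum_j\E[\RR_{jj}\omega_j]$, and in $\mathrm{Var}(m_n)$, so that the leading $s'$‑terms cancel to the order required for the $O(n^{-1})$ bound.
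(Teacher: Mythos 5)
Your approach is a genuinely different route from the paper's. The paper's actual proof uses a Lindeberg/moment-matching comparison: it constructs a bounded matrix $\Y$ with $\E X_{jk}^s = \E Y_{jk}^s$ for $s=1,\dots,4$, proves via resolvent swapping that $|\E m_n(z)-\E m_n^{\y}(z)|\le Cn^{-1-\phi}v^{-1}$ (inequality~\eqref{eq: stieltjes comparison}), and then invokes the already-known sub-Gaussian estimate from~\cite{GotTikh2015}[Theorem~1.3], $|\E m_n^{\y}(z)-s(z)|\le Cn^{-1}v^{-3/4}+Cn^{-3/2}v^{-3/2}|z^2-4|^{-1/4}$, before feeding the sum into Lemma~\ref{l: bound for delta}. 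The entire edge analysis and the self-consistent-equation bookkeeping that you propose to carry out yourself is thereby outsourced to a cited theorem in the bounded case, and moment matching is precisely the device that bridges to the $\mu_{4+\delta}$ case. Your proposal instead derives and solves the quadratic self-consistent equation for $\E m_n-s$ directly, which is the classical Bai/G\"otze--Tikhomirov route and is closer to what~\cite{GotTikh2015,GotTikh2015weak} do internally.

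However, your proposal as written contains a concrete gap. You set $v_0:=A_0n^{-1}\log n$ so that you can invoke the moment bounds of Theorem~\ref{th:main} and Lemma~\ref{main lemma}, which require $v\ge A_0 n^{-1}\log n$. But the smoothing inequality you wrote (and the paper's Lemma~\ref{l: bound for delta} alike) carries an additive error of size $O(v_0)$; with your choice this is already $O(n^{-1}\log n)$, which dominates everything else and prevents you from reaching $\Delta_n=O(n^{-1})$. The paper sets $v_0:=A_0 n^{-1}$ (no $\log$ factor), which is permissible precisely because the bounds it uses hold down to $v\sim n^{-1}$: Lemma~\ref{main lemma non optimal} (with $p\le A_1(nv)^{\kappa}$, hence only bounded $p$ at $v\sim n^{-1}$, which suffices here) for the swapping error, and Lemma~\ref{main lemma gauss}/\cite{GotTikh2015} for the sub-Gaussian reference matrix $\Y$, whose domain $\widetilde{\mathbb D}$ reaches $v\ge A_0n^{-1}$. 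To make your self-consistent-equation approach close, you must establish all the ingredient estimates (moments of $\RR_{jj}$ and of $\eta_j$, the variance bound, the Neumann-series remainders) on the full range $v\ge A_0n^{-1}$, not just $v\ge A_0n^{-1}\log n$ — and you should not assume the Neumann expansion of $\RR_{jj}$ converges termwise in expectation without a truncation or a priori bound, since under $\mu_{4+\delta}<\infty$ (even after truncating at $n^{\alpha}$) the high conditional moments of $\eta_j$ grow with $n$. This is exactly the difficulty the paper's moment-matching step is designed to avoid.

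Two smaller points. First, your smoothing inequality with the double integral $\iint|\imag(\E m_n-s)|\,du\,dy$ is not the one the paper uses (Lemma~\ref{l: bound for delta} has a $\sup_x\int_{v'}^V$ over the bulk interval $\mathbb J_{\varepsilon}'$); both forms exist in~\cite{GotTikh2003,GotzeTikh2014rateofconv}, but you should state which one you are using and check that the additive error terms are $O(v_0)$ with the constants you choose. Second, when you solve the quadratic $(\E m_n-s)(b+(\E m_n-s))=\text{r.h.s.}$ near the edge, you must verify that the root you take is the one with $|\E m_n-s|\to 0$ as $v\to\infty$ and that the two roots do not become ambiguous in the regime $|\text{r.h.s.}|\gtrsim|b|^2$; this phase-tracking is not trivial and is part of what~\cite{GotTikh2015}[Theorem~1.3] already did for the bounded case.
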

In Section~\ref{sec: rate of convergence} we give an alternative proof of this theorem, based on the methods developed in this paper. 

Another application of Theorem~\ref{th:main}  is the following result which shows the rigidity of eigenvalues. Let us define the  quantile  position of the $j$-th eigenvalue by
$$
\gamma_j: \quad \int_{-\infty}^{\gamma_j} g_{sc}(\lambda) \, d\lambda = \frac{j}{n}, \quad 1 \le j \le n.
$$
We will prove the following theorem.
\begin{theorem}\label{th: rigidity}
Assume that the conditions $\Cond$ hold and let $K>0$ be an integer. Then \\
\noindent (i). For all $j \in [K, n - K+1]$ there exist constants $c$ and $C, C_1$ depending on $\delta$ such that for  all $1 \le p \le c \log n$ we have
$$
\Pb(|\lambda_j - \gamma_j| \geq C_1 K [\min(j, n- j+1)]^{-\frac13} n^{-\frac23} ) \le \frac{C^p \log^{2p} n}{K^p}.
$$
\noindent (ii). Assume that $\delta = 4$. For any $0 < \phi < 2$ and all $j \leq K$ or $j \geq n - K + 1$ there exist constants $c$ and $C, C_1$ depending on $\phi$ and $\mu_8$ such that for $5 \le p \le c \log n$ 
$$
\Pb(|\lambda_j - \gamma_j| \geq C_1 K [\min(j, n- j+1)]^{-\frac13} n^{-\frac23} ) \le \frac{C}{n^{2-\phi}} + \frac{C^p \log^{12p} n}{K^p}.
$$
\end{theorem}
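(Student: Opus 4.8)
The plan is to deduce Theorem~\ref{th: rigidity} from the two preceding results by means of the elementary dictionary between eigenvalues and counting functions,
$$
\lambda_j(\W)\le x\ \Longleftrightarrow\ N_{(-\infty,x]}(\W)\ge j,\qquad \bigl|\,N_{(-\infty,x]}(\W)-nG_{sc}(x)\,\bigr|\le n\Delta_n^{*},
$$
together with the square-root behaviour of the quantiles near the edges: writing $m_j:=\min(j,n-j+1)$, integrating $g_{sc}(\lambda)\asymp\sqrt{(\lambda+2)_+}$ gives $2-|\gamma_j|\asymp(m_j/n)^{2/3}$ and $g_{sc}(\gamma_j)\asymp(m_j/n)^{1/3}$, and, crucially, $g_{sc}(x)\gtrsim(m_j/n)^{1/3}$ for every $x$ lying between $\gamma_j$ and the centre $0$. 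For part~(i) it will be enough to feed Theorem~\ref{th: rate of convergence} into this dictionary; part~(ii) genuinely needs more, namely the outside-the-spectrum bound of Theorem~\ref{th: stronger bound for imag part}.

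For part~(i) I would work on the event $\{\Delta_n^{*}<K/n\}$, whose complement has probability at most $C^p\log^{2p}n/K^p$ by Theorem~\ref{th: rate of convergence}. Fix $j\in[K,n-K+1]$, say $j\le n/2$, and set $t:=C_1Km_j^{-1/3}n^{-2/3}$ with $C_1$ a large constant. If $\lambda_j>\gamma_j+t$ then $N_{(-\infty,\gamma_j+t]}(\W)<j$, hence $\Delta_n^{*}>\int_{\gamma_j}^{\gamma_j+t}g_{sc}\ge t\,g_{sc}(\gamma_j)\gtrsim C_1K/n>K/n$, a contradiction; the bound $\lambda_j\ge\gamma_j-t$ follows in the same way, a short case check showing that $\int_{\gamma_j-t}^{\gamma_j}g_{sc}$ is either $\gtrsim C_1K/n$ or, should the interval run past the edge, exactly $m_j/n\ge K/n$ (here $j\ge K$ is used). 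Thus $\{|\lambda_j-\gamma_j|\ge t\}\subseteq\{\Delta_n^{*}\ge K/n\}$ and (i) follows. This is precisely where the hypothesis $j\in[K,n-K+1]$ enters: for smaller $j$ the edge case yields only $\Delta_n^{*}\gtrsim j/n$, which is too weak --- and that deficit is exactly what part~(ii) must repair.

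For part~(ii), with $j\le K$ (the case $j\ge n-K+1$ being mirror-symmetric), the upper bound is free: by part~(i) at $j=K$ and monotonicity, $\lambda_j\le\lambda_K\le\gamma_K+C_1K^{2/3}n^{-2/3}$, while $\gamma_K-\gamma_j\le2+\gamma_K\asymp K^{2/3}n^{-2/3}\le Km_j^{-1/3}n^{-2/3}$. Since $\lambda_j\ge\lambda_1$, $\gamma_j\ge-2$ and $2+\gamma_j\asymp(j/n)^{2/3}\le K^{2/3}n^{-2/3}\le Km_j^{-1/3}n^{-2/3}$, the lower bound reduces to the edge estimate $\lambda_1\ge-2-cK^{2/3}n^{-2/3}$ (and the mirror $\lambda_n\le2+cK^{2/3}n^{-2/3}$). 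Here I would first truncate the matrix entries at a level $n^{\alpha'}$ with $\alpha'=\alpha'(\phi)$, so that condition~$\CondTwo$ holds for the truncated matrix and the two matrices coincide outside an event of probability $Cn^{-(2-\phi)}$ --- this step uses $\mu_8<\infty$ and is the source of the first term of the bound. Writing $\eta:=cK^{2/3}n^{-2/3}$, I would then estimate $N_{(-\infty,-2-\eta]}(\W)\le\Tr \chi(\W)$ for a smooth $\chi\ge\one_{(-\infty,-2-\eta]}$ supported in $(-\infty,-2-\eta/2]$; as $\int\chi\,dG_{sc}=0$, a Helffer--Sj\"ostrand / smoothing inequality of G\"otze--Tikhomirov type expresses $\Tr \chi(\W)=n\int\chi\,d(F_n-G_{sc})$ through $\imag\Lambda_n(u+iv)$ at energies $|u|\ge2+\eta/2$ and heights $v\gtrsim\eta\gtrsim n^{-2/3}$, a region where $\gamma(u)+v\asymp\gamma(u)$ and Theorem~\ref{th: stronger bound for imag part} applies. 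Optimising over $v$ in the four error terms of that theorem and taking $p$ of order $\log n$ forces $N_{(-\infty,-2-\eta]}(\W)<1$ outside an event of probability $C^p\log^{12p}n/K^p$, the exponent $12$ absorbing the logarithmic losses from the various edge terms. Combining the two bounds gives~(ii).

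The hard part is this edge estimate in part~(ii). Theorem~\ref{th: rate of convergence} is by itself powerless below the spectral edge --- $G_{sc}$ carries no mass there, so $|N_{(-\infty,-2-\eta]}(\W)-nG_{sc}(-2-\eta)|\le n\Delta_n^{*}$ only gives $N_{(-\infty,-2-\eta]}(\W)\lesssim\log^2n$, independently of $\eta$ --- and pinning the extreme eigenvalues down to the optimal scale $K^{2/3}n^{-2/3}$ genuinely requires the delicate outside-the-spectrum bound of Theorem~\ref{th: stronger bound for imag part}, a localised form of the smoothing inequality that exploits the smallness of $\imag s$ beyond the edge, and careful tracking of the truncation error --- which is why the eighth-moment hypothesis and the loss $\phi$ surface only in part~(ii).
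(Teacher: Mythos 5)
The paper itself omits this proof entirely, referring only to the earlier rigidity argument in \cite{GotzeNauTikh2015b}[Theorem~1.3] together with Theorem~\ref{th: stronger bound for imag part} and ideas from \cite{Schlein2014}[Lemma~8.1] and \cite{ErdKnowYauYin2013}[Theorem~7.6]; your reconstruction follows exactly that route and the essential ideas are all present. Part~(i) via the eigenvalue--counting dictionary and Theorem~\ref{th: rate of convergence}, with the square-root density giving $\gamma_j+2\asymp(m_j/n)^{2/3}$ and $g_{sc}(\gamma_j)\asymp(m_j/n)^{1/3}$, and the $j\ge K$ hypothesis rescuing the case where $[\gamma_j-t,\gamma_j]$ runs past the edge, is correct (though one should work on the event $\{\Delta_n^{*}<c_0K/n\}$ for a small $c_0$ rather than $\{\Delta_n^{*}<K/n\}$, since the case-checking only yields $\Delta_n^{*}\gtrsim K/n$ with an unfavourable constant; this costs only an adjustment of $C$ in the probability bound). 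For part~(ii), the reduction to the extreme-eigenvalue estimate $\lambda_1\ge -2-cK^{2/3}n^{-2/3}$, the truncation producing the $n^{-(2-\phi)}$ term, and the use of the outside-the-spectrum bound of Theorem~\ref{th: stronger bound for imag part} through a Helffer--Sj\"ostrand/smoothing inequality are precisely the ingredients the paper indicates.

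Two places where you assert more than you verify. First, after truncating at level $n^{\alpha'(\phi)}$ the matrix satisfies $\CondTwo$ only with a reduced effective $\delta'=\delta'(\phi)<4$, not with $\delta=4$; one also has to recenter and renormalise (as the paper does in the delocalization proof, passing to $\breve\X$) before Theorem~\ref{th: stronger bound for imag part} applies. Since the resulting $\kappa'$ is still positive and the theorem's constants are allowed to depend on $\phi$, this does not break the argument, but it needs to be said. Second, the exponent $12$ in $\log^{12p}n$ is asserted, not derived: it should come out of optimising $v$ in the four error terms of Theorem~\ref{th: stronger bound for imag part} against the smoothing-inequality losses, and while I believe the bookkeeping, it is the one quantitatively sharp claim in the statement that your sketch does not actually check.
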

For comparisons we  refer the interested reader to relevant  results of Gustavsson~\cite{Gustavsson2005} for the Gaussian case, as well as to the results in the papers ~\cite{ErdKnowYauYin2013}[Theorem~7.6],~\cite{ErdKnowYauYin2013a}[Theorem~2.13],~\cite{GotzeTikh2011rateofconv}[Remark~1.2],~\cite{LeeYin2014}[Theorem~3.6] and~\cite{Schlein2014}[Theorem~4] already mentioned above.
The proof of Theorem~\ref{th: rigidity} is similar to the proof of Theorem~1.3 in~\cite{GotzeNauTikh2015b} up to some small improvements due to an improved bound in Theorem~\ref{th: rate of convergence}. For proving the part $(ii)$ we use the result of Theorem~\ref{th: stronger bound for imag part}, using  ideas from\cite{Schlein2014}[Lemma~8.1] and~\cite{ErdKnowYauYin2013}[Theorem~7.6]. Note here that our techniques allow to treat the case $0 < \delta < 4$ in $(ii)$ as well, but with weaker probability bounds in $(ii)$ of order $n^{-c(\delta)}$,
where  $c(\delta) > 0$ depends on moment exponent $\delta$.  
 We omit the details and the proof of Theorem~\ref{th: rigidity}.

\subsection{Notations} \label{sec: notation}
Throughout the paper we will use the following notations. We assume that all random variables are defined on common probability space $(\Omega, \mathcal F, \Pb)$ and let $\E$ be the mathematical expectation with respect to $\Pb$. We denote by $\one[A]$ the indicator function of the set $A$.

We denote by $\R$ and $\C$ the set of all real and complex numbers. We also define $\C^{+}: = \{z \in \C: \imag  z \geq 0\}$. Let $\T = [1, ... , n]$ denotes the set of the first $n$ positive integers. For any $\J \subset \T$ introduce $\T_{\J}: = \T \setminus \J$. To simplify all notations we will write $\T_j, \T_{\J, j}$ instead of $\T_{\{j\}}$ and $\T_{\J \cup \{j\}}$ respectively.

For any matrix $\W$ together with its resolvent $\RR$ and Stieltjes transform $m_n$
we shall systematically use the corresponding notations	$\W^{(\J)}, \RR^{(\J)}, m_n^{(\J)}$, respectively,  for the  sub-matrix  of $\W$ with entries $X_{jk}, j,k \in \T\setminus \J$. For simplicity we write $\W^{(j)}, \W^{(\J,j)}$ instead of $\W^{(\{j\})}, \W^{(\J \cup \{j\})}$. The same is applies to $\RR, m_n$ etc.

By $C$ and $c$ we denote some positive constants. If we write that $C$ depends on $\delta$ we mean that $C = C(\delta, \mu_{4+\delta})$.

For an arbitrary matrix $\A$ taking values in $\C^{n \times n}$ we define the operator norm by $\|\A\|: = \sup_{x \in \R^n: \|x\| = 1} \|\A x\|_2$, where $\|x\|_2 : = (\sum_{j = 1}^n |x_j|^2)^\frac12$. We also define the Hilbert-Schmidt norm by $\|\A\|_2: = \Tr^\frac12 \A \A^{*} = (\sum_{j,k = 1}^n |\A_{jk}|^2)^\frac12$.

\section{Proof of the main result}\label{proof of the main result}
The proofs of Theorem~\ref{th:main} and Theorem~\ref{th: stronger bound for imag part} repeat the arguments of~\cite{GotzeNauTikh2015a}, but for the readers convenience we provide the main steps here. We start  with the recursive representation of the diagonal entries $\RR_{jj}(z) = [(\W - z \II)^{-1}]_{jj}$ of the resolvent.  As noted before we shall systematically use for any matrix $\W$ together with its resolvent $\RR$, Stieltjes transform $m_n$ and etc. the corresponding quantities $\W^{(\J)}, \RR^{(\J)}, m_n^{(\J)}$ and etc. for the corresponding sub matrix with entries $X_{jk}, j,k \in \T\setminus \J$. We will often omit the argument $z$ from $\RR(z)$ and write $\RR$ instead. We may express $\RR_{jj}$ in the following way
\begin{equation}\label{eq: R_jj representation 0}
\RR_{jj} = \frac{1}{-z + \frac{X_{jj}}{\sqrt n} - \frac{1}{n}\sum_{l,k \in \T_j} X_{jk} X_{jl} \RR_{kl}^{(j)}}.
\end{equation}
Let $\varepsilon_j : = \varepsilon_{1j} + \varepsilon_{2j}+\varepsilon_{3j}+\varepsilon_{4j}$, where
\begin{align*}
&\varepsilon_{1j} =  \frac{1}{\sqrt n}X_{jj}, \quad \varepsilon_{2j} = -\frac{1}{n}\sum_{l \ne k \in \T_j} X_{jk} X_{jl} \RR_{kl}^{(j)}, \quad \varepsilon_{3j} = -\frac{1}{n}\sum_{k \in \T_j} (X_{jk}^2 -1) \RR_{kk}^{(j)}, \\
&\varepsilon_{4j}= \frac{1}{n} (\Tr \RR - \Tr \RR^{(j)}).
\end{align*}
Using these notations we may rewrite~\eqref{eq: R_jj representation 0} as follows
\begin{equation}\label{eq: R_jj representation}
\RR_{jj} = -\frac{1}{z + m_n(z)} + \frac{1}{z + m_n(z)} \varepsilon_j \RR_{jj}.
\end{equation}
Summing  last equations for $j = 1, \ldots, n$, we obtain
\begin{equation}\label{eq: m_n equation}
1 + z m_n(z) + m_n^2(z) = T_n,
\end{equation}
where
\begin{equation}\label{definition of T}
T_n: = \frac{1}{n} \sum_{j=1}^n \varepsilon_j \RR_{jj},
\end{equation}
It is well known that $s(z)$ satisfies the following quadratic equation
\begin{equation}\label{eq: s(z) equation}
1 + z s(z) + s^2(z) = 0.
\end{equation}
From~\eqref{eq: m_n equation} and \eqref{eq: s(z) equation} we conclude that
$$
\Lambda_n = \frac{T_n}{z + m_n(z) + s(z)} = \frac{T_n}{b_n(z)},
$$
where
\begin{equation}\label{eq:Lambda_b_bn}	
b_n(z) := b(z) + \Lambda_n, \quad b(z): = z + 2 s(z).
\end{equation}
From Lemma~B.1 in~\cite{GotzeNauTikh2015a} (see also~\cite{Schlein2014}Proposition 2.2])  it follows that for all $v > 0$ and $|u| \le 2+v$
(using the quantities \eqref{eq:Lambda_b_bn})
\begin{equation}\label{eq: abs value lambda main result section}
|\Lambda_n| \le C \min\left\{\frac{|T_n|}{|b(z)|}, \sqrt{|T_n|}\right\}.
\end{equation}
Moreover, let $u_0$ be an arbitrary fixed positive number. Then for all $v>0$ and $|u| \le u_0$
\begin{equation}\label{eq: abs imag lambda main result section}
|\imag\Lambda_n| \le C \min\left\{\frac{|T_n|}{|b(z)|}, \sqrt{|T_n|}\right\}.
\end{equation}
This means that in order to bound $\E |\Lambda_n|^p$ (or $\E|\imag \Lambda|^p$ respectively) it is enough to estimate $\E |T_n|^p$.

Let $V$ be an arbitrary fixed positive real number and $A_0$ is some large constant defined below. We introduce the following region in the complex plane:
\begin{equation}\label{eq: definition reginon D}
\mathbb D: = \{z = u+iv \in \C: |u| \le u_0, V \geq v \geq v_0: = A_0 n^{-1} \log n  \}.
\end{equation}

The following theorem provides a general bound for $\E |T_n|^p$ for all $z \in \mathbb D$ in terms of diagonal resolvent entries. To formulate the result of the theorem we need to introduce additional notations. Let
\begin{equation}\label{definition of A}
\mathcal A(q): = \max_{|\J| \le 1} \max_{j \in \T_\J} \E^{\frac{1}{q}} \imag^q \RR^{(\J)}_{jj},
\end{equation}
where $\J$ may be an empty set or one point set. We also denote
\begin{equation}\label{eq: definition of E}
\mathcal E_p: = \frac{p^p\mathcal A^p(\kappa p)}{(nv)^p}   +  \frac{ p^{2p}}{(nv)^{2p}}  + \frac{ |b(z)|^\frac{p}{2}\mathcal A^\frac{p}{2}(\kappa p)}{(nv)^p },
\end{equation}
where $\kappa = \frac{16}{1 - 2\alpha}$.
\begin{theorem}\label{th: general bound}
Assume that the conditions $\CondTwo$ hold and $u_0 > 2$ and $V > 0$. There exist  positive constants $A_0, A_1$ and $C$
depending on  $\alpha, u_0$ and $V$ such that for all $z \in \mathbb D$ we have
\begin{equation}\label{eq: general bound}
\E|T_n|^p \le C^p \mathcal E_p,
\end{equation}
where $1 \le p \le A_1 \log n$.
\end{theorem}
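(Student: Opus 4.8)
The plan is to follow the scheme of the proof of Theorem~2.1 in~\cite{GotzeNauTikh2015a}, indicating only the steps and the places that need to be adapted.

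First I would split $T_n=\sum_{\nu=1}^{4}T_n^{(\nu)}$ with $T_n^{(\nu)}:=\frac1n\sum_{j=1}^{n}\varepsilon_{\nu j}\RR_{jj}$ and dispose of $T_n^{(4)}$ at once. From $\RR_{jj}=\det(\W^{(j)}-z\II)/\det(\W-z\II)$ one gets $\partial_z\log\RR_{jj}=\Tr\RR-\Tr\RR^{(j)}$ and $\partial_z\RR_{jj}=(\RR^2)_{jj}$, hence $\varepsilon_{4j}\RR_{jj}=\frac1n(\RR^2)_{jj}$ and the exact identity
$$
T_n^{(4)}=\frac1{n^2}\Tr\RR^2(z)=\frac1n\,m_n'(z) .
$$
This gives the deterministic bound $|T_n^{(4)}|\le\frac{\imag m_n(z)}{nv}$, and taking the $p$-th moment together with $\E(\imag m_n)^p\le\mathcal A^p(p)\le\mathcal A^p(\kappa p)$ (Jensen and exchangeability of the rows of $\X$) this is absorbed by the first term of $\mathcal E_p$.

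For $\nu=1,2,3$ I would fix $j$ and condition on the minor $\W^{(j)}$, which is independent of the $j$-th row of $\X$. Writing the recursion as $\RR_{jj}=-(z+m_n^{(j)})^{-1}\bigl(1-\eta_j\RR_{jj}\bigr)$ with $\eta_j:=\varepsilon_{1j}+\varepsilon_{2j}+\varepsilon_{3j}$, I split
$$
\varepsilon_{\nu j}\RR_{jj}=-\frac{\varepsilon_{\nu j}}{z+m_n^{(j)}}+\frac{\varepsilon_{\nu j}\,\eta_j\,\RR_{jj}}{z+m_n^{(j)}} ,
$$
where the leading summand has zero conditional mean and the remainder is of higher order in the small quantities $\eta_j$. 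To the conditional moments of $\varepsilon_{1j}$ (a linear form) I apply Theorem~A.1 of~\cite{GotzeNauTikh2015a}, and to those of $\varepsilon_{2j},\varepsilon_{3j}$ (a bilinear form and a centred sum of independent variables) Theorem~A.2 of~\cite{GotzeNauTikh2015a}, bounding $\E_{\W^{(j)}}|\varepsilon_{\nu j}|^{2p}$ and $\E_{\W^{(j)}}|\eta_j|^{2p}$ in terms of $\|\RR^{(j)}\|_2^2$, $\sum_{k,l}|\RR^{(j)}_{kl}|^{2p}$, $\sum_k|\RR^{(j)}_{kk}|^2$, $\sum_k|\RR^{(j)}_{kk}|^{2p}$ and the truncation level $Dn^{\alpha}$ from $\CondTwo$. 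I then turn these Hilbert--Schmidt data into imaginary parts through $\|\RR^{(j)}\|_2^2=v^{-1}\imag\Tr\RR^{(j)}$, $\sum_l|\RR^{(j)}_{kl}|^2=v^{-1}\imag\RR^{(j)}_{kk}$ and the spectral bound $|\RR^{(j)}_{kk}|^2\le v^{-1}\imag\RR^{(j)}_{kk}$. Taking expectations, I use exchangeability of the rows and the deterministic interlacing estimate $|\imag m_n-\imag m_n^{(j)}|\le(nv)^{-1}$ to pass to powers of $\mathcal A(\cdot)$, and H\"older's inequality to absorb the correction factors $(Dn^{\alpha})^{2p-4}$ (finite since $\mu_{4+\delta}<\infty$, $\alpha=2/(4+\delta)$), arranging that every imaginary moment occurring has order at most $\kappa p$ and is hence bounded by a power of $\mathcal A(\kappa p)$; the value $\kappa=16/(1-2\alpha)$ is chosen exactly to leave this room. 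The factors $\RR_{jj}$ in the remainder term are controlled by the a priori bound $\E|\RR_{jj}|^{2p}\le C_0^{2p}$, which for $|u|\le2+v$, $v_0=A_0n^{-1}\log n\le v\le V$ and $1\le p\le A_1\log n$ is exactly what the descent in $v$ and the Gaussian moment matching of Section~\ref{Sketch of the proof} provide (Lemma~\ref{lemma: descent}, Lemma~\ref{lem: bound in the optimal region}, Lemma~\ref{main lemma gauss}); Lemma~4.1 of~\cite{GotzeNauTikh2015a} alone would only cover a range of $p$ that is far too small when $v$ is of order $n^{-1}\log n$. Collecting the resulting contributions — $\varepsilon_{3j}$ and $\varepsilon_{4j}$ yielding the first term of $\mathcal E_p$, the truncation corrections the second, and $\varepsilon_{2j}$ the third — and choosing $A_0$ large and $A_1$ small, I obtain $\E|T_n|^p\le C^p\mathcal E_p$.

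I expect the principal obstacle to lie in this last bookkeeping: one has to split the bounds on $\sum_{k,l}|\RR^{(j)}_{kl}|^{2p}$ and $\sum_k|\RR^{(j)}_{kk}|^{2p}$ according to the size of $v$ and balance the exponents of $n$, $v$, $p$ and $\mathcal A(\kappa p)$ so precisely that all the non-Gaussian corrections carrying powers of $n^{\alpha}$ collapse into the single term $p^{2p}(nv)^{-2p}$ of $\mathcal E_p$; and one needs the bound $\E|\RR_{jj}|^{2p}\le C_0^{2p}$ to be available for $p$ as large as $A_1\log n$ and $v$ as small as $A_0n^{-1}\log n$, which is the very improvement over~\cite{GotzeNauTikh2015a} that forces the introduction of the auxiliary descent and moment-matching lemmas.
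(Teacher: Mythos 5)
Your proposal is correct and follows essentially the same route as the paper: the paper's proof of Theorem~\ref{th: general bound} is a one-line citation to~\cite{GotzeNauTikh2015a}[Theorem~2.1], and your write-up supplies exactly the modification that the Introduction's sketch says is required — the a priori bound $\E|\RR_{jj}|^{2p}\le C_0^{2p}$ in the extended range $p\le A_1\log n$, $v\ge A_0 n^{-1}\log n$, which comes from Lemmas~\ref{lemma: descent},~\ref{lem: bound in the optimal region} and~\ref{main lemma gauss} rather than from Lemma~4.1 of~\cite{GotzeNauTikh2015a} alone. The minor deviations (disposing of $T_n^{(4)}$ up front via $\varepsilon_{4j}\RR_{jj}=\frac1n(\RR^2)_{jj}$, and centring the recursion at $m_n^{(j)}$ instead of $m_n$) are cosmetic and compatible with the scheme of~\cite{GotzeNauTikh2015a}.
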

\begin{proof}
See~\cite{GotzeNauTikh2015a}[Theorem~2.1].
\end{proof}

\begin{proof}[Proof of Theorem~\ref{th:main}] By  standard truncation arguments (see Lemmas~D.1,~D.2,~D.3 in~\cite{GotzeNauTikh2015a}) in what follows  we may assume that
$$
|X_{jk}| \le D n^\alpha \text{ for all }  1 \le j,k \le n
$$
and some $D: = D(\alpha) > 0$.
		
Applying Theorem~\ref{th: general bound} we will show in  section~\ref{sec: imagionary part}, Lemma~\ref{main lemma}, that there exist constants $H_0$ depending on $u_0, V$ and $A_0, A_1$ depending on $\alpha$ and $H_0$ such that
\begin{equation}\label{eq: bound for A (alpha p)}
\mathcal A^{p}(\kappa p) \le H_0^p \imag^p s(z) + \frac{H_0^p p^{p}}{(nv)^p}.
\end{equation}
for all $1 \le p \le A_1 \log n$ and $z \in \mathbb D$.  This inequality and Theorem~\ref{th: general bound} together imply that
\begin{align}\label{eq: T n 5 step}
\E|T_n|^p &\le \frac{C^p p^p \imag^p s(z)}{(nv)^p} +  \frac{C^p p^{2p}}{(nv)^{2p}}  + \frac{C^p |b(z)|^\frac{p}{2} \imag^\frac{p}{2}s(z)}{(nv)^p } + \frac{C^p |b(z)|^\frac{p}{2} p^\frac{p}{2}}{(nv)^\frac{3p}{2} } .
\end{align}
with some new constant $C$ which depends on $H_0$. To estimate $\E|\imag \Lambda_n|^p$ we may choose one of the bounds~\eqref{eq: abs imag lambda main result section}, depending on  $z$ being  near the edge of the limiting  spectral interval or not. If $|b(z)|^p \geq \frac{C^p p^p}{(nv)^p}$ then we may use the bound
$$
\E|\imag \Lambda_n|^p \le \frac{C^p \E |T_n|^p}{|b(z)|^p}.
$$
The r.h.s. of the last inequality may be estimated applying~\eqref{eq: T n 5 step}.  We get
\begin{align*}
\E|\imag \Lambda_n|^p \le\frac{C^p p^p \imag^p s(z)}{(nv)^p |b(z)|^p} +  \frac{C^p p^{2p}}{(nv)^{2p}|b(z)|^p}  + \frac{C^p  \imag^\frac{p}{2}s(z)}{(nv)^p |b(z)|^\frac{p}{2}} + \frac{C^p p^\frac{p}{2}}{(nv)^\frac{3p}{2} |b(z)|^\frac{p}{2} }.
\end{align*}
Since $|b(z)|^p \geq \frac{C^p p^p}{(nv)^p}$ the last inequality may be rewritten in the following way
\begin{align*}
\E|\imag \Lambda_n|^p \le\frac{C^p p^p \imag^p s(z)}{(nv)^p |b(z)|^p}  + \frac{C^p  \imag^\frac{p}{2}s(z)}{(nv)^p |b(z)|^\frac{p}{2}} + \frac{C^p p^{p} }{(nv)^p}.
\end{align*}
It remains to estimate the imaginary part of $s(z)$. Since
$$
\imag^p s(z) \le c^p|b(z)|^p \text { for } |u| \le 2  \text { and }  \imag^p s(z) \le \frac{c^p v^p}{|b(z)|^p} \text{ otherwise}
$$
both inequalities combined yield
\begin{equation}\label{eq: bound for Lambda}
\E|\imag \Lambda_n|^p \le \left(\frac{Cp}{nv}\right)^p,
\end{equation}
where we have used as well the fact that $c\sqrt{\gamma + v} \le |b(z)| \le C\sqrt{\gamma + v} $ for all $|u| \le u_0$, $0 < v \le v_1$. We assume now that $|b(z)|^p  \leq \frac{C^p p^{p}}{(nv)^p}$. Then $\imag^p s(z) \leq \frac{C^p p^{p}}{(nv)^p}$ and we obtain a bound proportional to $|T_n|^\frac12$. Hence,
$$
\E |\imag \Lambda_n|^p \le C^p\E|T_n|^\frac{p}{2} \le \left(\frac{C p}{nv}\right)^p.
$$
Similar arguments apply to $\E|\Lambda_n|^p$.
\end{proof}

\begin{proof}[Proof of Theorem~\ref{th: stronger bound for imag part}] From Theorem~\ref{th: general bound} we may  conclude that
\begin{align}\label{eq: T n 6 step}
\E|T_n|^p &\le \frac{C^p p^p \imag^p s(z)}{(nv)^p} +  \frac{C^p p^{2p}}{(nv)^{3p}}  + \frac{C^p |b(z)|^\frac{p}{2} \imag^\frac{p}{2}s(z)}{(nv)^p }
+ \frac{C^p |b(z)|^\frac{p}{2} p^\frac{p}{2}}{(nv)^\frac{3p}{2} } .
\end{align}
Applying~\eqref{eq: abs imag lambda main result section} we get
$$
\E|\imag \Lambda_n|^p \le \frac{\E |T_n|^p}{|b(z)|^p}.
$$
This inequality together with~\eqref{eq: T n 5 step} leads to
\begin{align}\label{eq: Lambda_n stronger bound}
\E|\imag \Lambda_n|^p \le\frac{C^p p^p \imag^p s(z)}{(nv)^p |b(z)|^p} +  \frac{C^p p^{2p}}{(nv)^{2p}|b(z)|^p}  + \frac{C^p  \imag^\frac{p}{2}s(z)}{(nv)^p |b(z)|^\frac{p}{2}} + \frac{C^p p^\frac{p}{2}}{(nv)^\frac{3p}{2} |b(z)|^\frac{p}{2} }.
\end{align}
Since $c\sqrt{\gamma + v} \le |b(z)| \le C\sqrt{\gamma + v} $ for all $|u| \le u_0$, $0 < v \le v_1$ and
$$
\frac{c v}{\sqrt{\gamma + v}} \le \imag s(z) \le \frac{c v}{\sqrt{\gamma + v}} \quad \text{ for all } 2 \le |u| \le u_0, 0 < v \le v_1,
$$
we finally get
\begin{align}\label{eq: Lambda_n stronger bound 2}
\E|\imag \Lambda_n|^p \le \frac{C^p p^p }{n^p (\gamma + v)^p} +  \frac{C^p p^{2p}}{(nv)^{2p} (\gamma + v)^\frac{p}{2}}  + \frac{C^p}{n^p v^\frac{p}{2} (\gamma + v)^\frac{p}{2}} + \frac{C^p p^\frac{p}{2}}{(nv)^\frac{3p}{2} (\gamma + v)^\frac{p}{4} }.
\end{align}
This bound concludes the proof of the theorem.
\end{proof}

\section{A moment bound for diagonal entries of the resolvent}\label{sec: imagionary part}
In this section we prove bounds for the diagonal entries of the resolvent. As mentioned before in the Introduction and Section~\ref{Sketch of the proof} these bounds will play a crucial role in the proof of Theorem~\ref{th:main} and Theorem~\ref{th: stronger bound for imag part}. To formulate the main result we shall introduce additional notations. We denote
\begin{equation}\label{eq: Psi definition}
\Psi(z): = \imag s(z) + \frac{p}{nv},
\end{equation}
and recall definition of the region $\mathbb D$,
\begin{equation*}
\mathbb D: = \{z = u+iv \in \C: |u| \le u_0, V \geq v \geq v_0: = A_0 n^{-1} \log n  \},
\end{equation*}
where $u_0, V > 0 $ are any fixed real numbers and $A_0$ is some large constant determined below. The main result of this section is the following lemma.
\begin{lemma}\label{main lemma}
Assuming the conditions $\CondTwo$ there exist  positive constants $C_0, H_0$ depending on $u_0, V$ and positive constants $A_0, A_1$ depending on $C_0, H_0, \delta$ and $K$ (see the next section for definition of $K$)
such that for all $z \in \mathbb D$ and $1 \le p \le A_1 \log n$ we have
\begin{align}\label{eq: main lemma first statement }
\max_{j \in \T} \E|\RR_{jj}(z)|^p &\le C_0^p, \\
\label{eq: main lemma second statement}
\E \frac{1}{|z+m_n(z)|^p} &\le C_0^p,\\
\label{eq: main lemma third statement}
\max_{j \in \T }\E \imag^p \RR_{jj}(z) &\le H_0^p \Psi^p(z).
\end{align}
\end{lemma}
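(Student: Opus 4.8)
The plan is to prove \eqref{eq: main lemma first statement }--\eqref{eq: main lemma third statement} simultaneously by a bootstrap in the imaginary part $v$, starting in the regime where \cite{GotzeNauTikh2015a}[Lemma~4.1] already applies and descending to the critical scale $v_0=A_0n^{-1}\log n$. Set $\kappa_0:=\delta/(2(4+\delta))=(1-2\alpha)/2$ and $\widetilde v:=n^{-1}\log^{1/\kappa_0}n$. For $v\ge\widetilde v$ we have $(nv)^{\kappa_0}\ge\log n$, so on $\{v\ge\widetilde v\}$ the analogues of \eqref{eq: main lemma first statement }--\eqref{eq: main lemma third statement} for $1\le p\le A_1\log n$ are contained in \cite{GotzeNauTikh2015a}[Lemma~4.1] and its proof (shrinking $A_1$ if necessary). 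The whole issue is the window $v_0\le v<\widetilde v$, where the moment order admitted in \cite{GotzeNauTikh2015a} drops below $\log n$.

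First I would carry out a descent step (Lemma~\ref{lemma: descent}). The maps $v\mapsto v\,\imag\RR_{jj}(u+iv)$ and $v\mapsto v\,\imag m_n(u+iv)$ are nondecreasing, since they equal integrals of $v^{2}/((\lambda-u)^{2}+v^{2})$ against probability measures; hence the bounds at $v=\widetilde v$ transfer to any $v\in[v_0,\widetilde v)$ at the cost of a factor $\widetilde v/v_0=\log^{1/\kappa_0-1}n$. Inserting this, together with crude moment estimates for $\varepsilon_{2j},\varepsilon_{3j}$ from Rosenthal's inequality under the truncation $|X_{jk}|\le Dn^{\alpha}$, into the recursion \eqref{eq: R_jj representation} and into the identity $|\RR_{jj}|^{2}=\imag\RR_{jj}\big/(v+\tfrac vn\|(\RR^{(j)})^{*}X_{j\cdot}\|^{2})$, I obtain $\max_{j}\E|\RR_{jj}(z)|^{p}\le C_0^{p}\log^{(1/\kappa_0-1)p}n$ for all $z\in\mathbb D$ and $1\le p\le A_1\log n$.

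The core of the argument is removing this logarithmic slack by moment matching. Following \cite{LeeYin2014}[Lemma~5.1] one constructs an auxiliary symmetric matrix $\Y=[Y_{jk}]$ with i.i.d.\ sub-Gaussian entries and $\E X_{jk}^{s}=\E Y_{jk}^{s}$ for $s=1,\dots,4$ (Lemma~\ref{lem: random variables}). A Lindeberg/Stein swap, performed one entry at a time along a path from $\X$ to $\Y$ (Lemma~\ref{lem: bound in the optimal region}), gives $\E|\RR_{jj}(z)|^{p}\le C^{p}+\E|\RR^{\y}_{jj}(z)|^{p}$ on $\mathbb D$ for $p$ of order $\log n$: the first four terms of the Taylor expansion of $\RR_{jj}$ in a single replaced entry cancel by the moment match, while the remainder is controlled by the resolvent-derivative bounds, the a priori descent bound (whose $\log^{(1/\kappa_0-1)p}n$ slack is harmless here), the truncation level $n^{\alpha}$, and $\mu_{4+\delta}<\infty$. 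Exploiting genuine sub-Gaussianity of $\Y$ one then proves $\E|\RR^{\y}_{jj}(z)|^{p}\le C_0^{p}$ for $v\ge v_0$, $1\le p\le c\log n$ (Lemma~\ref{main lemma gauss}), the quadratic-form concentration now being available up to moment order $\sim\log n$. Combining the last three displays yields \eqref{eq: main lemma first statement }.

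Statement \eqref{eq: main lemma second statement} then follows by rewriting \eqref{eq: R_jj representation} as $(z+m_n)^{-1}=-\RR_{jj}/(1-\varepsilon_j\RR_{jj})$: on $\{|\varepsilon_j\RR_{jj}|\le\tfrac12\}$ one has $|z+m_n|^{-1}\le 2|\RR_{jj}|$, and the complementary event is superpolynomially unlikely by the moment bounds for $\varepsilon_j$, which suffices after H\"older. For \eqref{eq: main lemma third statement} I would close a self-consistent loop: from $\imag\RR_{jj}=|\RR_{jj}|^{2}(v+\tfrac vn\|(\RR^{(j)})^{*}X_{j\cdot}\|^{2})$, the concentration $\tfrac vn\|(\RR^{(j)})^{*}X_{j\cdot}\|^{2}=\imag m_n^{(j)}+(\text{small error})$, interlacing ($\imag m_n^{(j)}\approx\imag m_n=\imag s+\imag\Lambda_n$), and the bound on $\E|\imag\Lambda_n|^{p}$ from Theorem~\ref{th: general bound} expressed through $\mathcal A(\kappa p)$ — itself a quantity of the type that \eqref{eq: main lemma third statement} controls — one sees that tentatively assuming \eqref{eq: main lemma third statement} with constant $H$ lets one re-derive it with a strictly smaller constant once $A_0$ is taken large in terms of $C_0,H_0,\delta,K$; this fixed-point argument pins down $H_0$ (depending on $u_0,V$) and then $A_0,A_1$. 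I expect the Lindeberg/Stein replacement (Lemma~\ref{lem: bound in the optimal region}) to be the main obstacle: one must run the swap to moment order $p\sim\log n$ at the borderline scale $v\sim n^{-1}\log n$, where each differentiation of a resolvent entry costs a factor $\sim v^{-1}\sim n/\log n$, so controlling the fourth-order Taylor remainder uniformly over the $\sim n^{2}$ replacement steps with only $\mu_{4+\delta}<\infty$, the truncation at $n^{\alpha}$, and the logarithmically lossy descent bound is delicate, and it is here that the hypothesis $\delta>0$ is genuinely used.
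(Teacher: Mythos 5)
Your overall architecture for part \eqref{eq: main lemma first statement } is the same as the paper's: start from the regime $v\geq\tilde v=n^{-1}\log^{1/\kappa}n$ where~\cite{GotzeNauTikh2015a}[Lemma~4.1] applies, descend to $v_0=A_0n^{-1}\log n$ at a polylogarithmic cost (Lemma~\ref{lemma: descent}), then remove the slack by a four-moment Lindeberg/Stein swap to a sub-Gaussian matrix $\Y$ (Lemmas~\ref{lem: random variables} and~\ref{lem: bound in the optimal region}), closing with the sub-Gaussian bound of Lemma~\ref{main lemma gauss}. One small inaccuracy in the descent step: monotonicity of $v\mapsto v\,\imag\RR_{jj}(u+iv)$ is the mechanism behind Lemma~\ref{appendix lemma imag resolvent relations on different v} and controls imaginary parts, but the transfer $|\RR_{jj}(u+iv/s)|\le s|\RR_{jj}(u+iv)|$ needed for the absolute value and for $|z+m_n|^{-1}$ is Lemma~\ref{appendix lemma resolvent relations on different v}, which is a slightly different (though equally elementary) inequality; your stated justification does not yield it directly.

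The proposed shortcut for \eqref{eq: main lemma second statement} has a genuine gap. You rewrite $(z+m_n)^{-1}=-\RR_{jj}/(1-\varepsilon_j\RR_{jj})$ and dispose of the event $\{|\varepsilon_j\RR_{jj}|>\tfrac12\}$ by claiming it is ``superpolynomially unlikely''. With moment control only up to order $p\le A_1\log n$, Markov's inequality gives $\Pb(|\varepsilon_j\RR_{jj}|>\tfrac12)\le c^{\,A_1\log n}=n^{-cA_1}$ for some $c\in(0,1)$ — polynomially, not superpolynomially, small. On that event the only deterministic a priori bound is $|z+m_n|^{-1}\le v^{-1}$, and at the critical scale $v\sim n^{-1}\log n$ with $p\sim\log n$ one has $v^{-p}=\exp\!\big(\Theta((\log n)^2)\big)$, which dwarfs any polynomial factor; interpolating via H\"older against the descent bound $\E|z+m_n|^{-2p}\lesssim\big(C_0\log^{1/\kappa-1}n\big)^{2p}$ does not help, since $\log^{(1/\kappa-1)2p}n=\exp\!\big(\Theta(\log n\,\log\log n)\big)$ also overwhelms $n^{-cA_1/2}$. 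This is precisely why the paper does not reduce \eqref{eq: main lemma second statement} to \eqref{eq: main lemma first statement } but instead carries $|z+m_n|^{-1}$ through the same descent step (second half of Lemma~\ref{lemma: descent}) and the same moment-matching step (inequality~\eqref{eq: matching 2} of Lemma~\ref{lem: bound in the optimal region}, using the resolvent-difference identity for $m_n$ and $m_n^{(a,b)}$) in parallel with $\RR_{jj}$. Likewise, for \eqref{eq: main lemma third statement} the paper does not run a self-consistent fixed-point argument at this stage; it again applies the same three-step pattern with the $\Psi$-weighted versions: Lemma~\ref{lemma: imag part of R_jj} in the large-$v$ regime, Lemma~\ref{lemma: descent imag} for the descent, Lemma~\ref{lem: bound in the optimal region imag} for the swap, and Lemma~\ref{lemma: imag part of R_jj gauss} for the sub-Gaussian endgame. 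Your bootstrap sketch for \eqref{eq: main lemma third statement} is closer in spirit to the recursion used inside the sub-Gaussian Lemma~\ref{lem: imag RR jj recur}, but as stated it presupposes uniform control of $\E|\imag\Lambda_n|^p$ that would have to be established by exactly the descent-plus-matching route you are trying to avoid.
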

Note that the region $\mathbb D$ and $p$ depend on $\alpha$ via some constants $A_0$ and $A_1$ only, but the power of the logarithmic factor in the definition of  $\mathbb D$ is independent of $\alpha$.
We split the proof of this lemma into two subsections. In the first subsection we prove~\eqref{eq: main lemma first statement } and~\eqref{eq: main lemma second statement}. In the second subsection we prove~\eqref{eq: main lemma third statement}. The proofs are very similar and we shall give the proof of~\eqref{eq: main lemma first statement } in full detail while only sketching the proofs of~\eqref{eq: main lemma second statement} and~\eqref{eq: main lemma third statement}. Note that in the subsequent sections we keep the notations for the  constants $A_0, A_1, C_0$ and $H_0$ introduced above, but the particular values may  depend on the location.

\subsection{Moment bounds for the absolute value of resolvent entries}
We start this section with the following lemma which was proved in~\cite{GotzeNauTikh2015a}[Lemma~4.1].
\begin{lemma}\label{main lemma non optimal}
Assuming the conditions $\CondTwo$ there exist a positive constant $C_0$ depending on $u_0, V$ and positive constants $A_0, A_1$ depending on $C_0, \delta$
such that for all $z \in \mathbb D$ and $1 \le p \le A_1(nv)^\kappa$ we have
\begin{equation}\label{eq: main lemma non optimal first statement}
\max_{j,k \in \T} \E|\RR_{jk}(z)|^p \le C_0^p
\end{equation}
and
\begin{equation}\label{eq: main lemma non optimal second statement}
\E \frac{1}{|z+m_n(z)|^p} \le C_0^p.
\end{equation}
\end{lemma}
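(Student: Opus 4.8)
The plan is to run the classical self‑consistent/bootstrap scheme for resolvent entries: both quantities in the lemma are governed by the single scalar $z+m_n(z)$ together with the individual error terms $\varepsilon_j$, so it suffices to control $\Lambda_n$ and the $\varepsilon_j$ in $L^q$ for exponents $q$ up to the range allowed by the moment assumption, and then close a loop in $v$.

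\textbf{Reduction.} From \eqref{eq: R_jj representation} one has $\RR_{jj}=-(z+m_n-\varepsilon_j)^{-1}$, and since $z+s(z)=-1/s(z)$ with $|s(z)|\le 1$ for all $z\in\C^{+}$, this rewrites as
\[
\RR_{jj}=\frac{s(z)}{1-s(z)\bigl(\Lambda_n-\varepsilon_j\bigr)},\qquad \frac{1}{z+m_n}=\frac{-s(z)}{1-s(z)\Lambda_n}.
\]
Hence on the event $\{|\Lambda_n-\varepsilon_j|\le\tfrac12\}$ one gets $|\RR_{jj}|\le 2$ and $|z+m_n|^{-1}\le 2$, while deterministically $|\RR_{jj}|,\,|z+m_n|^{-1}\le v^{-1}\le n(A_0\log n)^{-1}$ on $\mathbb D$. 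Thus $\E|\RR_{jj}|^p\le 2^p+v^{-p}\,\Pb(|\Lambda_n|+|\varepsilon_j|>\tfrac12)$, and it is enough to make the last probability super‑polynomially small by Markov's inequality at a large exponent $q\le A_1(nv)^{\kappa}$, i.e. to show $\E|\Lambda_n|^q$ and $\E|\varepsilon_j|^q$ are small for all such $q$. The off‑diagonal entries are then the easy case: $\RR_{jk}=-\RR_{jj}\,n^{-1/2}\sum_{l\in\T_j}X_{jl}\RR^{(j)}_{lk}$, and the $L^q$‑norm of this linear form is $O((q/(nv))^{1/2})$ via the Ward identity $\sum_{l}|\RR^{(j)}_{lk}|^2=v^{-1}\imag\RR^{(j)}_{kk}$, so $\max_{j,k}\E|\RR_{jk}|^p\le C_0^p$ follows once the diagonal bound is in place.

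\textbf{The closing estimate.} Fix $z\in\mathbb D$ and assume the a priori bound $\max_{|\J|\le n_0}\max_{j}\E^{1/q}|\RR^{(\J)}_{jj}(z)|^q\le 2C_0$ for all $q\le A_1(nv)^{\kappa}$, where $n_0$ is a fixed integer (the family $\{\W^{(\J)}:|\J|\le n_0\}$ is closed under the recursion, since the error terms for $\RR^{(\J)}_{jj}$ only involve diagonal entries of $\RR^{(\J\cup\{j\})}$ and the trace of $\imag\RR^{(\J\cup\{j\})}$). Split $\varepsilon_j=\varepsilon_{1j}+\varepsilon_{2j}+\varepsilon_{3j}+\varepsilon_{4j}$: the term $\varepsilon_{1j}=n^{-1/2}X_{jj}$ is trivial, $|\varepsilon_{4j}|\le (nv)^{-1}$ by rank‑one interlacing, and $\varepsilon_{2j},\varepsilon_{3j}$ are, conditionally on $\W^{(j)}$, a centred quadratic and a centred linear form in the independent entries $(X_{jk})_{k\in\T_j}$; the moment inequalities \cite{GotzeNauTikh2015a}[Theorems~A.1--A.2] bound their $q$‑th moments by powers of $n^{-1}\sum_{k,l\in\T_j}|\RR^{(j)}_{kl}|^2=v^{-1}\imag m_n^{(j)}$ and $n^{-1}\sum_{k}|\RR^{(j)}_{kk}|^2\le v^{-1}\imag m_n^{(j)}$, and $\imag m_n^{(j)}\le\max_k\imag\RR^{(j)}_{kk}\le 2C_0$ under the a priori bound; the truncation $|X_{jk}|\le Dn^{\alpha}$, $\alpha=2/(4+\delta)$, enters only the higher‑order terms, and keeping those under control is exactly what forces $q\le A_1(nv)^{\kappa}$ with $\kappa=\delta/(2(4+\delta))$. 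This gives $\E|\varepsilon_j|^q\lesssim (Cq/(nv))^{q/2}$. Feeding $\mathcal A(\kappa p)\le 2C_0$ into Theorem~\ref{th: general bound} yields $\E|T_n|^p\le C^p\mathcal E_p$, which is small, and then the stability bound \eqref{eq: abs value lambda main result section} — using the $\sqrt{|T_n|}$ branch when $|u|$ is within $O(v)$ of $\pm2$, where $c\sqrt{\gamma+v}\le|b(z)|\le C\sqrt{\gamma+v}$ may be tiny, and the $|T_n|/|b(z)|$ branch otherwise — gives $\E|\Lambda_n|^q$ small for all admissible $q$. Plugging back into the reduction recovers $\max_{|\J|\le n_0,j}\E^{1/p}|\RR^{(\J)}_{jj}|^p\le C_0<2C_0$ and, via the second identity above, $\E|z+m_n^{(\J)}|^{-p}\le C_0^p$.

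\textbf{Removing the a priori assumption; main obstacle.} The a priori bound is bootstrapped by continuity in $v$: at $v=V$ one has $\|\RR^{(\J)}\|\le V^{-1}$, so it holds trivially; the maps $v\mapsto\RR^{(\J)}_{jj}(u+iv)$ are Lipschitz with constant $\le v^{-2}$, which on $\mathbb D$ is at most $n^{2}$, so the set of $v\in[v_0,V]$ for which the strict a priori bound holds uniformly in $|u|\le u_0$ and in the finitely many $\J$ with $|\J|\le n_0$ is open; the closing estimate shows it is also closed, and it contains a neighbourhood of $V$, hence it is all of $[v_0,V]$. The main difficulty is making this loop quantitatively self‑consistent — choosing $C_0$, the slack factor $2C_0$, and $A_0,A_1$ so that the closing estimate \emph{genuinely} improves $2C_0$ to $C_0$ uniformly. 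This needs (i) the additional cancellation built into the bound $\E|T_n|^p\le C^p\mathcal E_p$ of Theorem~\ref{th: general bound}, a crude $\max_j\E|\varepsilon_j\RR_{jj}|^p$ estimate being insufficient; (ii) applying Markov on the bad event at the \emph{largest} admissible exponent, which is precisely why $\mu_{4+\delta}<\infty$ and the truncation scale $n^{\alpha}$ are indispensable and why $p$ is restricted to $O((nv)^{\kappa})$; and (iii) carrying the quantity $\gamma=\bigl||u|-2\bigr|$ through all estimates so that the edge region $|u|\approx 2$ is treated uniformly. A subsidiary but necessary point is that the recursion removes one index at a time, so the bootstrap must be run simultaneously for $\W$ and for all submatrices $\W^{(\J)}$ with $|\J|\le n_0$, each obeying a self‑consistent equation of the same shape.
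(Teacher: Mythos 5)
Your reduction is correct: from $z+s(z)=-1/s(z)$ and $\RR_{jj}=-(z+m_n-\varepsilon_j)^{-1}$ you indeed get $\RR_{jj}=s(z)\bigl(1-s(z)(\Lambda_n-\varepsilon_j)\bigr)^{-1}$, and a super-polynomial tail bound on $|\Lambda_n|+|\varepsilon_j|$ would close the argument, since the trivial bound $|\RR_{jj}|\le v^{-1}=O(n/\log n)$ absorbs the bad event. Your moment estimates for $\varepsilon_j$ and the use of the Ward identity for off-diagonals are also in order and match the tools the paper invokes (Lemma~\ref{appendix lemma resolvent inequalities 1}, the cited moment inequalities, the bound $|\varepsilon_{4j}|\le(nv)^{-1}$). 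So the \emph{closing estimate} part of your plan is sound.

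The gap is in how you propagate the a priori bound. You assert that ``the family $\{\W^{(\J)}:|\J|\le n_0\}$ is closed under the recursion,'' but this is false: the self-consistent representation of $\RR^{(\J)}_{jj}$ has error terms $\varepsilon_{2j}^{(\J)},\varepsilon_{3j}^{(\J)}$ that depend on $\RR^{(\J\cup\{j\})}$, a minor of depth $|\J|+1$. So when $|\J|=n_0$ the closing estimate reaches \emph{outside} the assumed-controlled family, and the continuity/connectedness argument in $v$ does not close at fixed depth $n_0$. You flag the need to run the argument simultaneously over a family of minors, but you do not resolve the depth increment, and taking $n_0$ up to $n$ would break the estimates since for large $|\J|$ the sub-matrix is small and $m_n^{(\J)}$ is far from $s(z)$. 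The paper (here by citation, and in the analogous Lemma~\ref{lemma step for resolvent} for the sub-Gaussian case) handles this differently: it descends $v\to v/s_0$ in finitely many steps, and at each step the depth $|\J|$ \emph{decreases} by one while Lemma~\ref{appendix lemma resolvent relations on different v} transfers the deeper-minor bound from the larger $v$ (where it is part of the hypothesis) to the smaller $v$. One therefore starts the descent from $v=V$ with depth $L=O(\log(V/v_0))=O(\log n)$, where the trivial bound $\|\RR^{(\J)}(V)\|\le V^{-1}$ applies uniformly in $\J$, and arrives at depth $0$ when $v$ hits the bottom of the range. To repair your version you would either need to adopt this $v$-descent, or replace the closed-family claim by a genuine simultaneous induction controlling diagonal entries, off-diagonal entries, and lower bounds on $|\RR^{(\J)}_{jj}|$ so that $\RR^{(\J,j)}_{kk}=\RR^{(\J)}_{kk}-\RR^{(\J)}_{kj}\RR^{(\J)}_{jk}/\RR^{(\J)}_{jj}$ lets you pass from depth $n_0$ to $n_0+1$ with a controlled error.
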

\begin{proof}
See~\cite{GotzeNauTikh2015a}[Lemma~4.1].
\end{proof}

\begin{proof}[Proof of~\eqref{eq: main lemma first statement } and~\eqref{eq: main lemma second statement}] Since $u$ is fixed and $|u| \le u_0$ we shall omit $u$ from the notation of the resolvent and denote $\RR(v): = \RR(z)$. Sometimes in order to simplify notations we shall also omit the  argument $v$ in $\RR(v)$ and just write $\RR$. The same applies to the Stieltjes transform $m_n(z)$.

Let $\tilde v = n^{-1} \log^\frac{1}{\kappa} n$, then $1 \le p \le A_1 (n \tilde v)^\kappa = A_1 \log n$. Without loss of generality we may consider $p = A_1 \log n$ only (otherwise one may apply Lyapunov's inequality for moments). It follows from~\eqref{eq: main lemma non optimal first statement} that
$$
\max_{j \in \T} \E |\RR_{jj}(v)|^p \le C_0^p
$$
for all $V \geq v \geq \tilde v$. To prove~\eqref{eq: main lemma first statement } we need to descent from $\tilde v$ to $v_0$ while keeping $p = A_1 \log n$.   Applying Lemma~\ref{lemma: descent} below with $s_0: = \log^{\frac{1}{\kappa}-1} n$ we may show that for all $v \geq v_0$
$$
\max_{j \in \T} \E |\RR_{jj}(v)|^p \le C_0^p \log^{\left(\frac{1}{\kappa}-1\right) p} n.
$$
It remains to remove the log factor on the right hand side of the previous inequality. To this aim we shall adopt the moment matching technique which has been successfully  used  recently by Lee and in Yin in~\cite{LeeYin2014}(see Lemma~5.2 and Lemma~5.3). We denote by $Y_{jk}, 1 \le j \le k \le n$ a triangular set of random variables such that $|Y_{jk}| \le D$, for some $D$ chosen later, and
$$
\E X_{jk}^s = \E Y_{jk}^s \, \text { for } \, s = 1, ... , 4.
$$
It follows from Lemma~\ref{lem: random variables} below that such a set of random variables exists. Let us denote $\W^{\y}: = \frac{1}{\sqrt n} \Y, \RR^\y: = (\W^\y - z \II)^{-1}$ and $m_n^\y(z): = \frac{1}{n} \Tr\RR^\y(z)$. Then, in Lemma~\ref{lem: bound in the optimal region} we show that for all $v \geq v_0$  and $5 \le p \le A_1 \log n$ there exist positive constants $C_1, C_2$ such that
$$
\E|\RR_{jj}(v)|^p \le C_1^p + C_2 \E|\RR_{jj}^\y(v)|^p.
$$
It is easy to see that $Y_{jk}$ are sub-Gaussian random variables. It follows from Lemma~\ref{main lemma gauss}  of the following section that
$$
\E|\RR_{jj}^\y(v)|^p \le C_0^p.
$$
This fact concludes the proof of~\eqref{eq: main lemma first statement }. Similar arguments yield the estimate~\eqref{eq: main lemma second statement}.
\end{proof}
\begin{lemma}\label{lemma: descent}
Let $C_0$ be some constant. Assume that for all $v' \geq \tilde v$ and all $1 \le p \le A_1(nv')^\kappa$ we have
\begin{equation}\label{eq: main assumption}
\max_{j,k \in \T} \E |\RR_{jk}(v')|^p \le C_0^p \quad \text{ and } \quad \E \frac{1}{|z+m_n(v')|^p} \le C_0^p.
\end{equation}
Then for all $s_0 \geq 1, v \geq \tilde v/s_0$ we have
$$
\max_{j, k \in \T} \E |\RR_{jk}(v)|^p \le 2^p(1+s_0)^p C_0^p \quad \text{ and } \quad \E \frac{1}{|z+m_n(v)|^p} \le s_0^p C_0^p.
$$
\end{lemma}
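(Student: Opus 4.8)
The plan is to derive a self-improving estimate that compares the resolvent at a small scale $v$ to the resolvent at a larger scale, and then iterate. The key elementary observation is a deterministic monotonicity/Lipschitz property of resolvents in the imaginary direction. Write $z = u + iv$ and $z' = u + iv'$ with $v' = s_0 v \geq \tilde v$. For any Hermitian $\W$ one has the identity $\RR(z) = \RR(z') + (z - z')\RR(z)\RR(z')$, hence for every pair of indices
$$
|\RR_{jk}(v)| \le |\RR_{jk}(v')| + (v' - v)\,\|\RR(v)\|\,\max_{l}|\RR_{lk}(v')| \le |\RR_{jk}(v')| + \frac{v'}{v}\,\max_{l}|\RR_{lk}(v')|,
$$
where I used $\|\RR(v)\| \le v^{-1}$ and $v' - v \le v'$. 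A cleaner route, which avoids having to control off-diagonal sums, uses the standard bound relating $\imag\RR_{jj}$ at two scales: since $\imag\RR_{jj}(z) = \sum_m \frac{v\,|\langle e_j,\psi_m\rangle|^2}{(\lambda_m-u)^2+v^2}$ is, up to the factor $v/v'$, monotone, one gets $\imag\RR_{jj}(v) \le (v'/v)\,\imag\RR_{jj}(v')$, and more generally $|\RR_{jk}(v)|$ is controlled by $(v'/v)$ times quantities at scale $v'$ via the spectral representation together with Cauchy--Schwarz. The upshot is a pointwise inequality of the form $|\RR_{jk}(v)| \le (1 + v'/v)\big(|\RR_{jk}(v')| + (\text{diagonal terms at }v')\big)$.

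Next I would take $p$-th moments. Choosing $v' := s_0 v$, we have $v \geq \tilde v/s_0$ exactly when $v' \geq \tilde v$, so assumption~\eqref{eq: main assumption} applies at scale $v'$: in particular $p \le A_1(nv')^\kappa$ is satisfied for the same range of $p$ since $nv' = s_0 nv \geq nv$, and $\max_{j,k}\E|\RR_{jk}(v')|^p \le C_0^p$. Applying Minkowski's inequality in $L^p$ to the pointwise bound, each term on the right contributes at most $(1+s_0)\,C_0$ after taking the $p$-th root, so that $\max_{j,k}\E|\RR_{jk}(v)|^p \le 2^p(1+s_0)^p C_0^p$, where the extra factor $2^p$ absorbs the two groups of terms. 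For the second estimate one writes $z + m_n(z) = z + m_n(z') + (m_n(z) - m_n(z'))$ and uses $|m_n(v) - m_n(v')| \le (v'-v)\,\sup_{v\le w\le v'}|m_n'(w)|$ together with $|m_n'(w)| = |\frac1n\Tr\RR^2(w)| \le \imag m_n(w)/w$; combining this with the elementary inequality $\imag m_n(w) \ge \frac{v}{v'}\,\imag m_n(v')$ for $w \ge v$ and the lower bound $\imag(z+m_n) \ge \imag m_n \ge v\,\imag m_n(v)\cdot v^{-1}$... more simply, one uses $|z+m_n(v)| \ge \frac{v}{v'}\,|z' + m_n(v')|$ type control (this follows because $\imag(z+m_n(z))$ scales down by at most $v/v'$ while its modulus is bounded below by its imaginary part up to constants on $\mathbb D$), giving $\E|z+m_n(v)|^{-p} \le s_0^p\,\E|z'+m_n(v')|^{-p} \le s_0^p C_0^p$.

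The main obstacle is making the deterministic scale-comparison inequality tight enough to land exactly the stated constants $2^p(1+s_0)^p$ and $s_0^p$ rather than something larger; in particular, for the $|z+m_n|^{-1}$ bound one must be careful that the lower bound on $|z+m_n(v)|$ in terms of $|z'+m_n(v')|$ really only loses the factor $v/v'=s_0^{-1}$ and no multiplicative constant. This requires exploiting that on $\mathbb D$ (with $|u|\le u_0$) the quantity $z+m_n(z)$ has $\imag(z+m_n(z)) = v + \imag m_n(z) \geq v$, so $|z+m_n(v)| \geq v \geq (v/v')\,v' $ and simultaneously comparing with the corresponding lower bound at $v'$ — i.e., one shows $\imag(z+m_n(z))$ is nondecreasing enough in $v$, which is a direct consequence of the spectral representation $\imag(z + m_n(z)) = v(1 + \frac1n\sum_j \frac{1}{(\lambda_j-u)^2+v^2})$ being, after division by $v$, monotone decreasing in $v$, hence $\imag(z+m_n(z))/v \geq \imag(z'+m_n(z'))/v'$. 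That monotonicity is the one genuinely structural input; everything else is Minkowski's inequality and bookkeeping of the factors of $s_0$.
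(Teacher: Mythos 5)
Your overall scheme --- set $v' = s_0 v$, derive a deterministic comparison between scales $v$ and $v'$, then apply Minkowski/Cauchy--Schwarz together with \eqref{eq: main assumption} at scale $v'$ --- is exactly the paper's route, but both deterministic inputs you propose are flawed. The resolvent-identity step is not correct as written: from $\RR(z) - \RR(z') = (z - z')\RR(z)\RR(z')$ one cannot bound $\bigl|\sum_l \RR_{jl}(z)\RR_{lk}(z')\bigr|$ by $\|\RR(z)\| \max_l|\RR_{lk}(z')|$; the operator norm does not control a row sum of $\RR(z)$. The correct device is Cauchy--Schwarz followed by the Ward-type identity $\sum_l |\RR_{jl}(z)|^2 = v^{-1}\imag \RR_{jj}(z)$ (Lemma~\ref{appendix lemma resolvent inequalities 1}). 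Your alternative ``cleaner'' route only controls $\imag \RR_{jj}$, yet hypothesis and conclusion concern $|\RR_{jk}|$: you still need the diagonal modulus comparison $|\RR_{jj}(v)| \le s_0 |\RR_{jj}(s_0 v)|$ (Lemma~\ref{appendix lemma resolvent relations on different v}), and this does not follow from imaginary-part monotonicity, since $|\RR_{jj}|$ and $\imag\RR_{jj}$ are far from comparable near the spectral edge.

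The second claim is where the gap is irreparable in your formulation. Monotonicity of $\imag(z+m_n(z))/v$ yields $|z+m_n(v)| \ge \imag(z+m_n(v)) \ge (v/v')\,\imag(z'+m_n(v'))$, which is strictly weaker than the needed lower bound $(v/v')\,|z'+m_n(v')|$, because $\imag(z'+m_n(v'))$ can be far smaller than $|z'+m_n(v')|$ on $\mathbb D$: near $u = \pm 2$ one has $|z+s(z)| \asymp 1$ while $\imag(z+s(z)) \asymp v + \imag s(z)$ is small, and $m_n$ typically behaves the same way. Your parenthetical ``its modulus is bounded below by its imaginary part up to constants'' is in the wrong direction and false on $\mathbb D$. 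The paper instead invokes a modulus-level scaling inequality, $|u+iv/s+m_n(u+iv/s)|^{-1} \le s\,|u+iv+m_n(u+iv)|^{-1}$ (inequality~\eqref{appendix eq resolvent relations on different v 2}), which rests on the observation that $-1/(z+m_n(z))$ is itself the Stieltjes transform of a positive measure, so the general estimate $|f(u+iv/s)| \le s\,|f(u+iv)|$ for Stieltjes transforms (a Schwarz--Pick-type fact for Nevanlinna functions) applies; that same fact, applied to $\RR_{jj}$, also supplies the diagonal modulus comparison you are missing in the first part. This modulus-level scaling, not imaginary-part monotonicity, is the genuinely structural input.
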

\begin{proof}
We first consider the diagonal entries. Let us fix $s_0\geq 1$ and $v \geq \tilde v/s_0$ and denote $v' = s_0 v \geq \tilde v$. Applying Lemma~\ref{appendix lemma resolvent relations on different v} and the main assumption~\eqref{eq: main assumption}  we get
\begin{equation}\label{eq: diagonal RR}
|\RR_{jj}(v)|^p \le s_0^p |\RR_{jj}(s_0 v)| \le (C_0 s_0)^p.
\end{equation}
Applying the resolvent equality
$$
|\RR_{jk}(v) - \RR_{jk}(s_0v)| \le v(s_0-1) |[\RR(v) \RR(s_0v)]_{jk}|.
$$
The Cauchy-Schwartz inequality and Lemma~\ref{appendix lemma resolvent inequalities 1} together imply that
$$
|\RR_{jk}(v) - \RR_{jk}(s_0v)| \le \sqrt{s_0} \sqrt{|\RR_{jj}(v)| |\RR_{kk}(s_0 v)|}.
$$
It follows that
$$
\E |\RR_{jk}(v)|^p \le 2^p \E |\RR_{jk}(s_0 v)|^p + 2^p  s_0^\frac{p}{2} \E^\frac12 |\RR_{jj}(v)|^p \E^\frac12 |\RR_{kk}(s_0 v)|^p.
$$
Applying~\eqref{eq: diagonal RR} we get
$$
\E |\RR_{jk}(v)|^p \le 2^p C_0^p + 2^p s_0^p C_0^p \le 2^p(1+s_0)^p C_0^p.
$$
Similarly, applying Lemma~\ref{appendix lemma resolvent relations on different v}[Inequality~\eqref{appendix eq resolvent relations on different v 2}], we arrive at a bound for $\E \frac{1}{|z+m_n(v)|^p}$. We omit the details.
\end{proof}

\begin{lemma}\label{lem: random variables}
For any $A, B \in \R$ with $B \geq A^2 + 1$, there exists a random variable $X$ such that
$$
\E X = 0, \, \E X^2 = 1,\,  \E X^3 = A,\,  \E X^4 = B
$$
and
$$
\supp(X) \in [-D B, D B],
$$
for some $D$ independent of $B$.
\end{lemma}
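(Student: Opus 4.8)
The plan is to construct $X$ explicitly as a discrete random variable supported on a small number of atoms, chosen so that the first four moments can be matched by solving a linear/algebraic system. A clean choice is a three-point distribution: let $X$ take values $a$, $0$, and $-b$ (with $a,b>0$) with probabilities $p$, $1-p-q$, and $q$ respectively. The four moment conditions
$$
pa - qb = 0,\quad pa^2 + qb^2 = 1,\quad pa^3 - qb^3 = A,\quad pa^4 + qb^4 = B
$$
give four equations in the four unknowns $a,b,p,q$. The first equation lets us eliminate one parameter, and the remaining system can be reduced to a quadratic; one then checks that the admissibility constraints $p,q\ge 0$, $p+q\le 1$ hold precisely when $B\ge A^2+1$. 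In fact the condition $B-A^2-1\ge 0$ is exactly the statement that the Hankel matrix
$$
\begin{pmatrix} 1 & 0 & 1 \\ 0 & 1 & A \\ 1 & A & B \end{pmatrix}
$$
associated with the moment sequence $(1,0,1,A,B)$ is positive semidefinite (its relevant minor is $B-A^2-1$), which by the classical solution of the truncated Hamburger/Hausdorff moment problem guarantees the existence of a representing measure with at most three atoms.

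The key steps, in order, would be: first, invoke the truncated moment problem — a sequence $(m_0,\dots,m_4)=(1,0,1,A,B)$ admits a representing probability measure with support of size $\le 3$ if and only if the Hankel matrix above is positive semidefinite; second, verify this positive semidefiniteness reduces exactly to $B\ge A^2+1$ (the other principal minors being $1$ and $1$); third, locate the atoms. Since the three atoms are roots of a cubic polynomial whose coefficients are rational functions of $A$ and $B$, and since $\sum p_i x_i^4 = B$ forces each atom to satisfy $|x_i|\le B^{1/4}\cdot(\min_i p_i)^{-1/4}$, one extracts a crude but explicit bound $|x_i|\le DB$ for a universal constant $D$ once one has a uniform lower bound on the smallest probability. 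Alternatively, and more robustly, one can avoid delicate root-location: take the largest atom, say $a$; from $pa^2\le 1$ and $pa^4\le B$ one gets $a^2 \le B/(pa^2)\le B \cdot a^2$, hmm — so instead bound via $a^4 \le B/p$ and separately lower-bound $p$ away from $0$ using that $a$ must carry enough mass to produce the fourth moment, yielding $|a|=O(B)$.

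The main obstacle is the last step: getting a bound on the support that is linear in $B$ (rather than, say, $B^{1/4}$ times something that blows up, or $B^2$) while keeping the constant $D$ independent of $A$ and $B$. The difficulty is that as $B\to\infty$ with $A$ moderate, the construction is tempted to push one atom far out with vanishing probability, and one must either choose the construction to prevent this or bound the resulting support carefully. I would handle this by picking a construction that is robust in the regime $B\ge A^2+1$: e.g., first reduce to the case $A\ge 0$ by symmetry ($X\mapsto -X$), then use a four-point or carefully parametrized three-point family where one verifies directly that all atoms stay within $[-CB,CB]$; the elementary inequality $\sum_i p_i x_i^4 = B$ together with $\sum_i p_i x_i^2 = 1$ and $\sum_i p_i = 1$ forces $\max_i x_i^2 \le B$ as soon as the minimal atomic mass is bounded below by $1/B$, which is easy to arrange. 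Once support boundedness with an explicit $D$ is established, the remaining moment identities hold by construction and the proof is complete. (Note the statement's $[-DB,DB]$ is generous — any polynomial-in-$B$ bound would suffice for the application in Lemma~\ref{main lemma}, since $B=\mu_{4+\delta}$-type quantities enter only through constants.)
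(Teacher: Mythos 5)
The paper provides no argument of its own for this lemma: the proof given is just the citation \cite{LeeYin2014}[Lemma~5.2]. Your attempt at a self-contained construction is therefore a genuinely different route, and the three-point ansatz is the right idea, but as written the argument has two gaps.

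First, the ``if and only if'' characterization of the truncated Hamburger moment problem purely in terms of positive semidefiniteness of the Hankel matrix is false in general. The data $(m_0,\dots,m_4)=(1,0,0,0,1)$ has a PSD Hankel matrix (eigenvalues $1,0,1$), yet admits no representing measure, since $m_2=0$ forces the measure to be $\delta_0$ and then $m_4=0\neq 1$. It does hold in the present instance, but only because the leading $2\times 2$ Hankel block is the identity (hence nonsingular), so the Curto--Fialkow recursiveness/flat-extension condition is automatic; if you lean on that theory, you need to say so.

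Second, and more seriously, the support bound---which you yourself identify as ``the main obstacle''---is never actually established. Your chain $a^2\le B/(pa^2)\le Ba^2$ is vacuous, as you notice in passing, and the closing assertion that a lower bound $p_{\min}\ge 1/B$ ``is easy to arrange'' is exactly the missing step: without control of the weight carried by the extreme atom, nothing prevents it from escaping to infinity. Fortunately your own ansatz solves in closed form and bypasses this entirely. Writing $s:=pa=qb$, the four moment equations collapse to $s(a+b)=1$, $a-b=A$, $ab=B-A^2$; hence $a+b=\sqrt{4B-3A^2}$, $a=\tfrac12(\sqrt{4B-3A^2}+A)$, $b=\tfrac12(\sqrt{4B-3A^2}-A)$, $a,b>0$, and $p+q=1/(ab)=1/(B-A^2)\le 1$ precisely when $B\ge A^2+1$, which is the hypothesis. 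Moreover, using $|A|\le\sqrt{B-1}\le\sqrt B$ and $\sqrt{4B-3A^2}\le 2\sqrt B$,
\begin{equation*}
\max(a,b)=\tfrac12\bigl(\sqrt{4B-3A^2}+|A|\bigr)\le\tfrac12\bigl(2\sqrt B+\sqrt B\bigr)=\tfrac32\sqrt B\le\tfrac32 B,
\end{equation*}
since $B\ge 1$. Thus $\supp(X)\subset[-\tfrac32 B,\tfrac32 B]$ (in fact $[-\tfrac32\sqrt B,\tfrac32\sqrt B]$) with the universal constant $D=3/2$, and no appeal to a lower bound on $p_{\min}$ is needed. With this explicit computation inserted, your proof closes.
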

\begin{proof}
See~\cite{LeeYin2014}[Lemma~5.2].
\end{proof}

\begin{lemma}\label{lem: bound in the optimal region}
For all $v \geq v_0$  and $5 \le p \le \log n$ there exist positive constants $C_1, C_2$ such that
\begin{equation}\label{eq: matching 1}
\E|\RR_{jk}(v)|^p \le C_1^p + C_2\E|\widetilde \RR_{jk}(v)|^p
\end{equation}
and
\begin{equation}\label{eq: matching 2}
\E \frac{1}{|z+m_n(v)|^p} \le C_1^p + C_2\E \frac{1}{|z+\tilde m_n(v)|^p}.
\end{equation}
\end{lemma}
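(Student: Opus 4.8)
The plan is to compare the resolvent of $\W$ with that of $\W^{\y}$ by a Lindeberg-type swapping argument at the level of $p$-th moments, replacing the entries $X_{jk}$ one pair at a time by $Y_{jk}$. Since $X_{jk}$ and $Y_{jk}$ have matching moments up to order four and both are bounded (by $Dn^\alpha$ and $D$ respectively), each swap changes $\E|\RR_{jk}(v)|^p$ by a quantity controlled by a fifth-order remainder in the Taylor expansion of the appropriate smooth functional. First I would fix a pair $(a,b)$ and write $\W$ and $\W^{\y}$ as differing only in that entry; denote by $\W^{(ab)}$ the matrix with a zero in position $(a,b)$ (and $(b,a)$), with resolvent $\RR^{(ab)}$. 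Using the resolvent expansion
$$
\RR = \RR^{(ab)} - \frac{1}{\sqrt n}\RR^{(ab)}\Delta_{ab}\RR^{(ab)} + \cdots + \Big(\!-\frac{1}{\sqrt n}\Big)^{4}\RR^{(ab)}(\Delta_{ab}\RR^{(ab)})^4 + \text{(remainder of order $n^{-5/2}$)},
$$
where $\Delta_{ab}$ has $X_{ab}$ in positions $(a,b),(b,a)$, I would expand $|\RR_{jk}(v)|^p = (\RR_{jk}\overline{\RR_{jk}})^{p/2}$ to fifth order in $X_{ab}/\sqrt n$. Taking expectations, the terms of order $0$ through $4$ coincide for the $X$-matrix and the $Y$-matrix by moment matching, so only the fifth-order remainder survives; a matching remainder appears on the $Y$ side.

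The key quantitative input is a bound on the fifth-order remainder term. Each factor of $\RR^{(ab)}$ appearing in the remainder is estimated by $\|\RR^{(ab)}(v)\| \le v^{-1}$, and each off-diagonal contraction $\Delta_{ab}\RR^{(ab)}\Delta_{ab}$ contributes a factor $|X_{ab}|^2 \le D^2 n^{2\alpha}$; with the $n^{-5/2}$ prefactor from $(1/\sqrt n)^5$ and summing over the $O(n^2)$ pairs $(a,b)$, the accumulated error is of order $n^{2}\cdot n^{-5/2}\cdot n^{5\alpha}\cdot v^{-c p}$ for some $c$ depending on the derivative count; since $\alpha = 2/(4+\delta) < 1/2$ and $v \ge v_0 = A_0 n^{-1}\log n$, choosing $p \le \log n$ one checks that for $|j-k|$ fixed this total error is bounded by $C_1^p$ (the polynomially small gain $n^{2-5/2+5\alpha}$ beats the $(nv)^{-1}$-type losses once $p=O(\log n)$). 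Here it is essential to use the a priori control already available: for $v \ge v_0$ we only have $\E|\RR_{jj}(v)|^p \le C_0^p \log^{(1/\kappa-1)p} n$ from Lemma~\ref{lemma: descent}, not the sharp $C_0^p$, and one must be careful that the swapping error is genuinely smaller than this, which is why we keep the constant $C_2$ (not $1$) in front of the $\Y$-moment: we only need the bound to be \emph{stable}, not contractive. To make the telescoping rigorous I would run the swap along a fixed ordering of pairs, at each stage using that the partially-swapped matrix still satisfies the crude bound $\E|\RR_{jk}|^p \le C_0^p\log^{(1/\kappa-1)p}n$ (which follows by the same argument applied to the hybrid matrix, whose entries are all bounded by $Dn^\alpha$ and have matching first four moments), so the per-step error is uniformly $O(n^{-1/2+5\alpha-2}\cdot\text{poly})$ and summing over $n^2$ steps gives a total contribution absorbed into $C_1^p$.

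The argument for \eqref{eq: matching 2} is identical in structure: $\frac{1}{|z+m_n(v)|^p} = \big((z+m_n)(\bar z + \overline{m_n})\big)^{-p/2}$ is a smooth function of the entries on the event where $|z+m_n|$ is bounded below, and on the complementary (polynomially rare, by Lemma~\ref{main lemma non optimal}\eqref{eq: main lemma non optimal second statement} combined with Lemma~\ref{lemma: descent}) event one uses the deterministic bound $|z+m_n(v)|^{-1}\le v^{-1}$ to control the contribution. One then Taylor-expands $m_n = \frac1n\Tr\RR$ to fifth order in each $X_{ab}/\sqrt n$ exactly as above; moment matching kills orders $\le 4$, and the fifth-order remainder is again bounded using $\|\RR^{(ab)}\|\le v^{-1}$ and $|X_{ab}|\le Dn^\alpha$, summed over pairs.

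\textbf{Main obstacle.} The delicate point is the bookkeeping in the fifth-order remainder: one must show that the powers of $n^{\alpha}$ produced by the (bounded but large) entries $X_{ab}$, together with the $p$-dependent combinatorial factors from differentiating $(\RR_{jk}\overline{\RR_{jk}})^{p/2}$ five times and the only-polylogarithmically-controlled a priori moments, still leave a net negative power of $n$ after summing over all $O(n^2)$ pairs — and that this holds uniformly for $p$ up to $\log n$ rather than only up to $(nv)^\kappa$. Organizing the expansion so that each resolvent factor is hit with the operator-norm bound $v^{-1}$ while the two "free" indices $j,k$ in $\RR_{jk}$ are handled by a single Cauchy--Schwarz/Ward-identity step (turning $\sum_l|\RR_{jl}|^2$ into $v^{-1}\imag\RR_{jj}$), so that no extra factors of $n$ leak in from index summations, is where the real work lies; everything else is a routine Lindeberg telescoping.
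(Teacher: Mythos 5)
Your high-level plan — Lindeberg swapping with fourth-moment matching — is indeed the paper's strategy, but two quantitative gaps in the middle of the argument would make the sketch fail as written.

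\textbf{Fixed fifth-order expansion is not enough.} You truncate the resolvent expansion at order $4$ plus a ``remainder of order $n^{-5/2}$'' and estimate the remainder by the uniform bound $|X_{ab}|\le Dn^{\alpha}$, giving a per-pair error of order $n^{-5/2+5\alpha}$ and hence, after summing over $n^2$ pairs, $n^{2-5/2+5\alpha}=n^{-1/2+5\alpha}$. Since $\alpha=2/(4+\delta)$ this is \emph{positive} (indeed close to $2$) for small $\delta$, so the error diverges rather than vanishing. The paper avoids this by expanding to a $\delta$-dependent order $m$ chosen so that $\phi(m+1)=4$ with $\phi=\kappa=\delta/(2(4+\delta))$; the terms of order $\mu\ge 5$ are then controlled using $n^{-\mu/2}\E|X_{ab}|^{\mu}\lesssim n^{-2-\phi(\mu-4)}$ (from $|X_{ab}|\le Dn^{1/2-\phi}$ together with $\E X_{ab}^4\le\mu_4$), which \emph{does} decay for every $\mu\ge 5$. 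The moment-matching cancellation only handles orders $0$ through $4$; orders $5$ through $m$ are absorbed by the extra moment gain, and only the order-$(m+1)$ remainder is thrown away by crude norm bounds.

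\textbf{The bound $v^{-cp}$ on the remainder is fatal.} In the regime $v\ge v_0=A_0 n^{-1}\log n$ and $p\sim\log n$, any factor of the form $v^{-cp}$ with $c>0$ fixed is of size $(n/\log n)^{c\log n}$, i.e.\ super-polynomially large; there is no way to beat it by a fixed power of $n$. The paper uses the operator-norm bound $\|\RR\|\le v^{-1}\le n$ exactly \emph{once} (to control a single dangling resolvent entry $\RR_{ak}$ or $\RR_{bk}$ in the $(m{+}1)$-th order remainder), and bounds all other resolvent factors by the a~priori moment estimate $\E^{1/(cp)}|\RT_{i_l i_{l+1}}|^{cp}\lesssim C_0\log^{1/\kappa-1}n$ coming from Lemma~\ref{lemma: descent}. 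Crucially, the remaining power $|\RR_{jk}|^{p-1}$ is \emph{not} estimated by $v^{-(p-1)}$: it is handled by H\"older's inequality to produce $\E^{(p-1)/p}|\RR_{jk}|^p$, followed by Young's inequality, giving a bound of the form $\frac{C^p}{n^2}+\frac{\E|\RR_{jk}|^p}{n^2}$. The second (a~priori uncontrolled) term is what forces the paper to set up the careful telescoping recursion $\E|\RR_{jk}|^p\le\rho\,\E|\RS_{jk}|^p+C_1^p/n^2$ with $\rho\le 1+\theta/n^2$, which survives $n(n+1)/2$ iterations precisely because the error loop-back coefficient is $1/n^2$. You mention stability over contractivity, which is the right instinct, but the mechanism that makes each step only cost $1/n^2$ is the H\"older--Young step above, which your sketch replaces with the much too lossy $v^{-1}$-per-factor accounting. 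Relatedly, the paper does not Taylor-expand $|\RR_{jk}|^p$ directly; it writes $\E|\RR_{jk}|^p=\E\,\RR_{jk}\varphi(\RR_{jk})$ with $\varphi(z)=\bar z|z|^{p-2}$, expands $\RR_{jk}$ by the \emph{exact} resolvent identity \eqref{resolvent expansion R}, and compares $\varphi(\RR_{jk})$ to $\varphi(\widetilde\RT_{jk})$ where $\widetilde\RT_{jk}=\E(\RR_{jk}\mid\mathfrak M^{(a,b)})$ is the conditional expectation over the single swapped entry. This Stein-type device is what keeps the $p$-dependence from compounding.

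In short: the Lindeberg framework is right, but to make it work for all $\delta>0$ in the range $p\lesssim\log n$, $v\gtrsim n^{-1}\log n$, you need (i) an expansion to order $m\sim 1/\phi$ rather than order $5$, (ii) the descent lemma a~priori moment bounds in place of $\|\RR\|\le v^{-1}$ on all but one factor, and (iii) the H\"older--Young absorption that produces the $1/n^2$-per-step error and the ensuing telescoping inequality.
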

\begin{proof}
We first prove~\eqref{eq: matching 1}. The method is based on the following replacement scheme, which has been used in  recent results~\cite{ErdKnowYauYin2012} and~\cite{LeeYin2014}. We replace  the $\frac{n(n+1)}{2}$ matrix entries $X_{ab}$ by $Y_{ab}, 1 \le a \le b \le n$, thus  replacing the corresponding  resolvent entries $\RR_{jk}$ by $ \RR_{jk}^\y$ for every pair of $(j, k)$. Let $\J, \K \subset \T$.
Denote by $\W^{(\J, \K)}$ the  random matrix $\W$ with all entries in the positions $(\mu, \nu), \mu \in \J, \nu \in \K$ replaced by $\frac{1}{\sqrt n} Y_{\mu \nu}$. Assume that we have already exchanged all entries in positions $(\mu, \nu), \mu \in \J, \nu \in \K$ and are going to replace an additional entry in the position $(a, b), a \in \T \setminus \J, b \in \T \setminus \K$. Without loss of generality we may assume that $\J = \emptyset, \K = \emptyset$ (hence $\W^{(\J, \K)} = \W$) and then denote $\V: = \W^{(\{a\}, \{b\})}$. The following  additional notations will be needed.
$$
\EE^{(a,b)} =
\begin{cases}
\ee_{a} \ee_b^\mathsf{T} + \ee_b \ee_a^\mathsf{T}, & 1 \le a < b \le n, \\
\ee_{a} \ee_a^\mathsf{T}, &a = b.
\end{cases}
$$
and $\U : = \W - \frac{X_{ab}}{\sqrt n}  \EE^{(a,b)}$, where $\ee_j$ denotes a unit column-vector with all zeros except $j$-th position. In these notations we may write
$$
\W = \U + \frac{1}{\sqrt n} X_{ab} \EE^{(a,b)}, \quad \V = \U + \frac{1}{\sqrt n} Y_{ab} \EE^{(a,b)}.
$$
Recall that $\RR: = (\W - z \II)^{-1}$ and denote $\RS: = (\V - z \II)^{-1}$ and $\RT: = (\U - z \II)^{-1}$. Let us assume that we have already proved the following fact
\begin{equation}\label{eq: main eq 1}
\E|\RR_{jk}|^p = \mathcal I(p) + \frac{\theta_1 C^p}{n^2} + \frac{\theta_1\E|\RR_{jk}|^p}{n^2},
\end{equation}
where $\mathcal I(p)$ is some quantity depending on $p, n$ (see~\eqref{eq: mathcal A(p) def} below for precise definition)  and $|\theta_1|\le 1, C > 0$ are some numbers. Similarly,
\begin{equation}\label{eq: main eq 2}
\E|\RS_{jk}|^p = \mathcal I(p) + \frac{\theta_2 C^p}{n^2} + \frac{\theta_2\E|\RS_{jk}|^p}{n^2},
\end{equation}
where $|\theta_2| \le 1$. It follows from~\eqref{eq: main eq 1} and~\eqref{eq: main eq 2} that
\begin{equation}
\left(1 - \frac{\theta_1}{n^2}\right )\E|\RR_{jk}|^p \le \left(1 - \frac{\theta_2}{n^2}\right )\E|\RS_{jk}|^p + \frac{2C^p}{n^2},
\end{equation}
Let us denote $\rho : = \left(1 - \frac{\theta_2}{n^2} \right )\left(1 - \frac{\theta_1}{n^2}\right )^{-1}$. We get
\begin{equation}\label{eq: step}
\E|\RR_{jk}|^p \le \rho \E|\RS_{jk}|^p + \frac{C_1^p}{n^2},
\end{equation}
with some positive constant $C_1$. Repeating~\eqref{eq: step} recursively $n(n+1)/2$ times we arrive at the following bound
\begin{equation}\label{eq: step 2}
\E|\RR_{jk}|^p \le \rho^\frac{n(n+1)}{2} \E|\RR_{jk}^\y|^p + \frac{C_1^p}{n^2} \left(1 + \rho_1 + ... + \rho_1^{\frac{n(n+1)}{2} - 1 }\right).
\end{equation}
It is easy to see from the definition of $\rho$ that for some $\theta$, say $|\theta| < 4$, we have
$$
\rho \le 1 + \frac{|\theta|}{n^2}.
$$
From this inequality and~\eqref{eq: step 2} we deduce that
\begin{equation}
\E|\RR_{jk}|^p \le C_2 \E|\RR_{jk}^\y|^p + C_3^p,
\end{equation}
with some positive constants $C_2$ and $C_3$. From the last inequality we may conclude the statement of the lemma. It remains to prove~\eqref{eq: main eq 1} (resp.~\eqref{eq: main eq 2}).  Applying the resolvent equation we get for $m \geq 0$
\begin{equation}\label{resolvent expansion R}
\RR = \RT + \sum_{\mu=1}^m \frac{(-1)^\mu}{n^\frac{\mu}{2}} X_{ab}^\mu (\RT \EE^{(a,b)})^\mu \RT + \frac{(-1)^{m+1}}{n^\frac{m+1}2} X_{ab}^{m+1} (\RT \EE^{(a,b)})^{m+1} \RR.
\end{equation}
The same identity holds for $\RS$
\begin{equation}\label{resolvent expansion S}
\RS = \RT + \sum_{\mu=1}^m \frac{(-1)^\mu}{n^\frac{\mu}{2}} Y_{ab}^\mu (\RT \EE^{(a,b)})^\mu \RT + \frac{(-1)^{m+1}}{n^\frac{m+1}{2}} Y_{ab}^{m+1} (\RT \EE^{(a,b)})^{m+1} \RS.
\end{equation}
We investigate~\eqref{resolvent expansion R}. In order handle arbitrary high moments of $\RR_{jk}$ we apply a Stein type technique which we have  used  in previous papers~\cite{GotzeNauTikh2015a} and ~\cite{GotzeTikh2014rateofconv}. Let us introduce the following function $\varphi(z): = \overline z |z|^{p-2}$ and write
$$
\E |\RR_{jk}|^p = \E \RR_{jk} \varphi(\RR_{jk}).
$$
Applying~\eqref{resolvent expansion R}  we get
\begin{align}\label{eq: p moment of R_{jk}}
\E |\RR_{jk}|^p &= \sum_{\mu=0}^4 \frac{(-1)^\mu}{n^\frac{\mu}{2}} \E X_{ab}^\mu [(\RT \EE^{(a,b)})^\mu \RT]_{jk} \varphi(\RR_{jk})
\nonumber\\
&+\sum_{\mu=5}^m \frac{(-1)^\mu}{n^\frac{\mu}{2}} \E X_{ab}^\mu [(\RT \EE^{(a,b)})^\mu \RT]_{jk} \varphi(\RR_{jk}) \nonumber \\
&+  \frac{1}{n^\frac{m+1}2} \E X_{ab}^{m+1} [(\RT \EE^{(a,b)})^{m+1} \RR]_{jk}\varphi(\RR_{jk}) =: \mathcal A_0 + \mathcal A_1 + \mathcal A_2.
\end{align}
The bound for $\mathcal A_2$ is easy. It is straightforward to check that $[(\RT \EE^{(a,b)})^{m+1} \RR]_{jk}$ is the sum of $2^{m+1}$ terms of the following type
$$
\RT_{j i_1} \RT_{i_1 i_2} ... \RT_{i_m i_m} \RR_{i_m k},
$$
where $i_l = a$ or $i_l = b$ for $l = 1, ... , m$. We may estimate $|\RR_{ak}|$ or $|\RR_{bk}|$ by $n$, since $v > n^{-1}$. Taking absolute values, applying H{\"o}lder's inequality and the bound $\E^\frac{1}{c p} |\RT_{i_l i_{l+1}}|^{c p} < C_0 \log^{\frac{1}{\kappa}-1}n$ for some $c>0$ (see Lemma~\ref{lemma: descent} with $s_0: = \log^{\frac{1}{\kappa}-1} n$) we get
$$
|\mathcal A_2| \le \frac{1}{n^\frac{m+1}2} \E|X_{ab}|^{m+1} [(\RT \EE^{(a,b)})^{m+1} \RR]_{jk}|\RR_{jk}|^{p-1} \le  \frac{n \log^{c(\kappa)}n}{n^\frac{m+1}2} \E^\frac{1}{p}|X_{ab}|^{(m+1)p} \E^\frac{p-1}{p}|\RR_{jk}|^{p},
$$
where $c(\kappa) > 0 $ -- some constant depending on $\kappa$. Using $\CondTwo$ we may assume without loss of generality that $|X_{jk}| \le D n^{\frac12 - \phi}$ for some $\phi > 0$ depending on $\delta$. Indeed, one may choose $\phi: = \frac{\delta}{2(4+\delta)}$. Applying this fact and the last inequality we may write
$$
|\mathcal A_2| \le \frac{C n \log^{c(\kappa)}n}{n^{\phi(m+1)}}  \E^\frac{p-1}{p}|\RR_{jk}|^{p}.
$$
We now choose $m$ such that $\phi(m+1) = 4$. Applying $ 2 = \frac{2}{p} + \frac{2(p-1)}{p}$ and Young's inequality we obtain
\begin{equation}\label{eq: A_2 estimate}
|\mathcal A_2| \le \frac{C}{n^{\frac{2}{p}+\frac{2(p-1)}{p}}} \E^\frac{p-1}{p}|\RR_{jk}|^{p} \le \frac{C^p}{p n^{2}} + \frac{\E|\RR_{jk}|^{p}}{n^2} .
\end{equation}
Let us consider the term $\mathcal A_1 = \mathcal A_{1,5} + ... + \mathcal A_{1, m}$, where
$$
A_{1, \mu} : = \frac{(-1)^\mu}{n^\frac{\mu}{2}} \E X_{ab}^\mu [(\RT \EE^{(a,b)})^\mu \RT]_{jk} \varphi(\RR_{jk}), \quad \mu = 5, \ldots, m.
$$
We fix some $\mu \geq 5$ and bound  $\mathcal A_{1,\mu}$.
Let us introduce the following quantity
\begin{equation}\label{tilde T definition}
\widetilde \RT_{jk}: = \E(\RR_{jk}\big|\mathfrak M^{(a,b)}),
\end{equation}
where $\mathfrak M^{(a,b)}: = \sigma\{X_{st}, (s,t) \neq (a,b)\}$ -- $\sigma$-algebra generated by  $X_{st}, 1 \le s \le t \le n, (s,t) \neq (a,b)$. We may split $\mathcal A_{1,\mu}$ into a sum of two terms $\mathcal A_{1,\mu} = \mathcal B_1 + \mathcal B_2$, where
\begin{align*}
&\mathcal B_{1} = \frac{(-1)^\mu}{n^\frac{\mu}{2}} \E X_{ab}^\mu [(\RT \EE^{(a,b)})^\mu \RT]_{jk} \varphi(\widetilde \RT_{jk}), \\
&\mathcal B_{2} = \frac{(-1)^\mu}{n^\frac{\mu}{2}} \E X_{ab}^\mu [(\RT \EE^{(a,b)})^\mu \RT]_{jk} [\varphi(\RR_{jk}) -  \varphi(\widetilde \RT_{jk})].
\end{align*}
For the first term we may write
\begin{align*}
|\mathcal B_{1}| &\le \frac{1}{n^\frac{\mu}{2}} \E |X_{ab}|^\mu \E|[(\RT \EE^{(a,b)})^\mu \RT]_{jk}| |\widetilde \RT_{jk}|^{p-1} \le \frac{C \log^{c(\kappa)} n}{n^\frac{\mu}{2}} \E |X_{ab}|^\mu \E^\frac{p-1}{p} |\RR_{jk}|^{p}, \\
&\le \frac{C \log^{c(\kappa)} n}{n^{2 + \phi(\mu-4)}} \E^\frac{p-1}{p} |\RR_{jk}|^{p} \le \frac{C}{n^{\frac{2}{p} + \frac{2(p-1)}{p} }} \E^\frac{p-1}{p} |\RR_{jk}|^{p} \le \frac{C^p}{n^2} + \frac{\E|\RR_{jk}|^{p}}{n^2}.
\end{align*}
Let us introduce the following function
\begin{equation}\label{eq: function f}
f(t): = \varphi(\widetilde \RT_{jk} + t(\RR_{jk}-\widetilde \RT_{jk})), \quad t \in [0,1].
\end{equation}
It is easy to see that
$$
f(1) = \varphi(\RR_{jk}), \quad f(0) = \varphi(\widetilde \RT_{jk}). 
$$
Moreover,  by Taylor's formula, for all $l \geq 1$,
\begin{equation}\label{eq: Taylors formula}
f(t) = \sum_{k=0}^{l-1} \frac{f^{(k)}(0)}{k!} t^k + \frac{t^l}{(l-1)!} \E_\theta (1-\theta)^{l-1} f^{(l)}(\theta t),
\end{equation} 
where $\theta$ is uniformly distributed on $[0, 1]$ and independent of all other r.v. 
It is straightforward to check that there exists some positive constant $K_l$ depending on $l$ such that
$$
|f^{(l)}(t)| \le K_l p^{l}|t|^l |\widetilde \RT_{jk} + t (\RR_{jk}-\widetilde \RT_{jk})|^{p-l-1} .
$$
Applying~\eqref{eq: Taylors formula} with $t = 1, l = 1$ we estimate $\mathcal B_2$ as follows
$$
|\mathcal B_{2}| \le \frac{p}{n^\frac{\mu}{2}} \E |X_{ab}|^\mu |[(\RT \EE^{(a,b)})^\mu \RT]_{jk}| |\RR_{jk} -  \widetilde \RT_{jk}||\widetilde \RT_{jk} + \theta(\RR_{jk} - \widetilde \RT_{jk})|^{p-2}.
$$
Using triangular inequalities we may bound (up to some constant) the r.h.s. of the previous inequality by the sum $|\mathcal B_2| \le \mathcal I_1 + \mathcal I_2$, where
\begin{align*}
&\mathcal I_{1} = \frac{p}{n^\frac{\mu}{2}} \E |X_{ab}|^\mu |[(\RT \EE^{(a,b)})^\mu \RT]_{jk}| |\RR_{jk} -  \widetilde \RT_{jk}||\widetilde \RT_{jk}|^{p-2}, \\
&\mathcal I_{2} = \frac{p^p}{n^\frac{\mu}{2}} \E |X_{ab}|^\mu |[(\RT \EE^{(a,b)})^\mu \RT]_{jk}| |\RR_{jk} -  \widetilde \RT_{jk}|^{p-1}.
\end{align*}
Applying again~\eqref{resolvent expansion R} we obtain $\mathcal I_1 \le \mathcal I_{11} + ... + \mathcal I_{14}$, where we denoted
\begin{align*}
&\mathcal I_{11} = p \sum_{l=1}^m \frac{1}{n^\frac{\mu+l}{2}} \E |X_{ab}|^{\mu+l} |[(\RT \EE^{(a,b)})^\mu \RT]_{jk}| |[(\RT \EE^{(a,b)})^l \RT]_{jk}|\widetilde \RT_{jk}|^{p-2}, \\
&\mathcal I_{12} = p \sum_{l=1}^m \frac{1}{n^\frac{\mu+l}{2}} \E|X_{ab}|^l \E |X_{ab}|^{\mu} \E|[(\RT \EE^{(a,b)})^\mu \RT]_{jk}| |[(\RT \EE^{(a,b)})^l \RT]_{jk}|\widetilde \RT_{jk}|^{p-2},\\
&\mathcal I_{13} =  \frac{p}{n^\frac{\mu+m+1}{2}} \E |X_{ab}|^{\mu+m+1} |[(\RT \EE^{(a,b)})^\mu \RT]_{jk}| |[(\RT \EE^{(a,b)})^{m+1} \RR]_{jk}||\widetilde \RT_{jk}|^{p-2},\\
&\mathcal I_{14} =  \frac{p}{n^\frac{\mu+m+1}{2}} \E |X_{ab}|^{\mu} |[(\RT \EE^{(a,b)})^\mu \RT]_{jk}| |\E(|X_{ab}|^{m+1}|[(\RT \EE^{(a,b)})^{m+1} \RR]_{jk}|\big|\mathfrak M^{(a,b)})|\widetilde \RT_{jk}|^{p-2}.
\end{align*}
It is straightforward to check that
\begin{align*}
\mathcal I_{11} \le \frac{C m}{n^{\frac{4}{p} + \frac{2(p-2)}{p} }}\E^\frac{p-2}{p}|\RR_{jk}|^{p} \le \frac{C^p}{n^2} + \frac{\E |\RR_{jk}|^{p}}{n^2}.
\end{align*}
The term $\mathcal I_{12}$ may be estimated in a similar way.  The terms $\mathcal I_{13}$ and $\mathcal I_{14}$ can be estimated as before choosing $m$ sufficiently large.
Let us consider the term $\mathcal I_2$. Similarly, it may be rewritten as follows
\begin{align*}
&\mathcal I_{21} =  \sum_{l=1}^m \frac{p^p}{n^\frac{\mu+l(p-1)}{2}} \E |X_{ab}|^{\mu+l(p-1)} \E|[(\RT \EE^{(a,b)})^\mu \RT]_{jk}| |[(\RT \EE^{(a,b)})^l\RT]_{jk}|^{p-1}, \\
&\mathcal I_{22} =  \sum_{l=1}^m \frac{p^p}{n^\frac{\mu+l(p-1)}{2}} \E^{p-1} |X_{ab}|^l \E |X_{ab}|^{\mu} \E|[(\RT \EE^{(a,b)})^\mu \RT]_{jk}| |[(\RT \EE^{(a,b)})^l\RT]_{jk}|^{p-1}, \\
&\mathcal I_{23} =  \frac{p^p}{n^\frac{\mu+(m+l)(p-1)}{2}} \E |X_{ab}|^{\mu+(m+1)(p-1)} |[(\RT \EE^{(a,b)})^\mu \RT]_{jk}| |[(\RT \EE^{(a,b)})^{m+1} \RR]_{jk}|^{p-1},\\
&\mathcal I_{24} =  \frac{p^p}{n^\frac{\mu+(m+l)(p-1)}{2}} \E |X_{ab}|^{\mu} |[(\RT \EE^{(a,b)})^\mu \RT]_{jk}| |\E(|X_{ab}|^{m+1}|[(\RT \EE^{(a,b)})^{m+1} \RR]_{jk}|\big|\mathfrak M^{(a,b)})|^{p-1}.
\end{align*}
We investigate $\mathcal I_{21}$. We obtain
$$
\mathcal I_{21} \le   \sum_{l=1}^m \frac{C^{cp} p^p \log^{c(\kappa)p} n }{n^{2+ \phi l(p-1)}} \le \frac{C^p}{n^2}.
$$
The same estimate is valid for $\mathcal I_{22}$. The terms $\mathcal I_{23}, \mathcal I_{24}$ may be estimated as before choosing  $m$ sufficiently large. Finally, we  conclude that
\begin{equation}\label{eq: A_1 estimate}
|\mathcal A_1| \le \frac{C^p}{n^2} + \frac{\E|\RR_{jk}|^{p}}{n^2}.
\end{equation}
Let us consider now the term $\mathcal A_0$. Applying~\eqref{eq: Taylors formula} with $t = 1, l = 5-\mu$ we rewrite $\mathcal A_0$ as follows
\begin{align*}
\mathcal A_0 &= \sum_{\mu=0}^4 \frac{(-1)^\mu}{n^\frac{\mu}{2}} \E X_{ab}^\mu \E [(\RT \EE^{(a,b)})^\mu \RT]_{jk} \varphi(\RT_{jk}) \\
&+ \sum_{\mu=0}^4 \frac{(-1)^\mu}{n^\frac{\mu}{2}} \sum_{l=1}^{4-\mu} \frac{1}{l!}\E X_{ab}^\mu  [(\RT \EE^{(a,b)})^\mu \RT]_{jk} (\RR_{jk} - \RT_{jk})^l  \varphi^{(l)}(\RT_{jk}) \\
&+  \sum_{\mu=0}^4 \frac{(-1)^\mu}{n^\frac{\mu}{2}} \frac{1}{(4-\mu)!} \E X_{ab}^\mu [(\RT \EE^{(a,b)})^\mu \RT]_{jk} (\RR_{jk} - \RT_{jk})^{5-k} (1 - \theta)^{4-k} \\
&\qquad\qquad\qquad\qquad\qquad\qquad\times\varphi^{(5-\mu)}(\RT_{jk} + \theta(\RR_{jk} - \RT_{jk}))\\
& =: \mathcal A_{00} + \mathcal A_{01} + \mathcal A_{02}.
\end{align*}
Let us study the term $\mathcal A_{01}$. We may write $\mathcal A_{01} = \sum_{\mu=0}^4 \sum_{l=1}^{4-\mu} (-1)^\mu (l!)^{-1} \mathcal B_{\mu l}$. We fix some $\mu = 0 ,... , 4$ and some $l = 1,..., 4-\mu$. We may apply now~\eqref{resolvent expansion R} with $m \geq 4$ and get $\mathcal B_{\mu l} = \mathcal B_{\mu l}^{(0)} + \mathcal B_{\mu l}^{(1)}+\mathcal B_{\mu l}^{(2)}$, where
\begin{align*}
&\mathcal B_{\mu l}^{(0)} := \sum_{\substack{\mu_1 + ... + \mu_m = l \\ \mu+\mu_1 + 2\mu_2 + ... + m \mu_m \le 4 }} \frac{C_{\mu_1, ... , \mu_m}^l}{n^{\frac{\mu}{2} + \frac{\mu_1}{2} + \frac{2\mu_2}{2} + ... + \frac{m \mu_m}{2}}} \E X_{ab}^{\mu+\mu_1 + 2\mu_2 + ... + m \mu_m} \\
&\qquad\qquad\qquad\qquad\times \E [(\RT \EE^{(a,b)})^\mu \RT]_{jk}[(\RT \EE^{(a,b)})\RT]_{jk}^{\mu_1}...[(\RT \EE^{(a,b)})^m\RT]_{jk}^{\mu_m} \varphi^{(l)}(\RT_{jk}), \\
&\mathcal B_{\mu l}^{(1)} := \sum_{\substack{\mu_1 + ... + \mu_m = l \\ \mu+ \mu_1 + 2\mu_2 + ... + m \mu_m > 4 }} \frac{C_{\mu_1, ... , \mu_m}^l}{n^{\frac{\mu}{2}+\frac{\mu_1}{2} + \frac{2\mu_2}{2} + ... + \frac{m \mu_m}{2}}} \E X_{ab}^{\mu+\mu_1 + 2\mu_2 + ... + m \mu_m} \\
&\qquad\qquad\qquad\qquad\times \E [(\RT \EE^{(a,b)})^\mu \RT]_{jk}[(\RT \EE^{(a,b)})\RT]_{jk}^{\mu_1}...[(\RT \EE^{(a,b)})^m \RT]_{jk}^{\mu_m} \varphi^{(l)}(\RT_{jk}), \\
&\mathcal B_{\mu l}^{(2)} := \sum_{\mu_{m+1} = 1}^l \sum_{\mu_1 + ... + \mu_m = l-\mu_{m+1}} \frac{C_{\mu_1, ... , \mu_{m+1}}^l}{n^{\frac{\mu}{2} +\frac{\mu_1}{2} + \frac{2\mu_2}{2} + ... + \frac{(m+1) \mu_{m+1}}{2}}} \E X_{ab}^{\mu+\mu_1 + 2\mu_2 + ... + (m+1) \mu_{m+1}} \\
&\qquad\qquad\times  [(\RT \EE^{(a,b)})^\mu \RT]_{jk}[(\RT \EE^{(a,b)})\RT]_{jk}^{\mu_1}...[(\RT \EE^{(a,b)})^m\RT]_{jk}^{\mu_m}[(\RT \EE^{(a,b)})^{m+1}\RR]_{jk}^{\mu_{m+1}} \varphi^{(l)}(\RT_{jk}),
\end{align*}
where $C_{\mu_1, ... , \mu_{m+1}}^l$ are multinomial coefficients. We now introduce in~\eqref{eq: main eq 1}
\begin{equation}\label{eq: mathcal A(p) def}
\mathcal I(p) := \mathcal A_{00} + \sum_{\mu=0}^4 \sum_{l=1}^{4-\mu} \frac{(-1)^\mu}{l!} \mathcal B_{\mu l}^{(0)}.
\end{equation}
The estimate of $\mathcal B_{\mu l}^{(1)}$ is similar to $\mathcal A_1$, see~\eqref{eq: A_1 estimate}. Here, one has to expand   $\varphi^{(l)}(\RT_{jk})$ at the point $\widetilde \RT_{jk}$ and apply the same arguments as before. To estimate $\mathcal B_{\mu l}^{(2)}$ we again expand $\varphi^{(l)}(\RT_{jk})$ at the point $\widetilde \RT_{jk}$. From here on  we may apply the same arguments as for $\mathcal A_1$ and $\mathcal A_2$ (see~\eqref{eq: A_1 estimate} and~\eqref{eq: A_2 estimate} by taking $m$ large enough and bound $|\RR_{ak}|$ or $|\RR_{bk}|$ by $n$. The same procedure applies to $\mathcal A_{02}$. We finally get
$$
\E|\RR_{jk}|^p = \mathcal I(p) + r_n(p),
$$
where
$$
|r_n(p)| \le \frac{C^p}{n^2} + \frac{\E|\RR_{jk}|^{p}}{n^2}.
$$
The proof of~\eqref{eq: matching 2} is similar. Let us denote by $m_n^{(a,b)}(z)$ the Stieltjes transform of $\W^{(\{a\}, \{b\})}$. It is easy to see that
$$
\frac{1}{z + m_n(z)} = \frac{1}{z + m_n^{(a,b)}(z)} + \frac{m_n(z) - m_n^{(a,b)}(z)}{(z + m_n(z))(z + m_n^{(a,b)}(z))}.
$$
We may use this formula recursively together with~\eqref{resolvent expansion R} and get~\eqref{eq: matching 2}. We omit the details.
\end{proof}

\subsection{Moments of  the imaginary part of diagonal entries of the resolvent}

\begin{lemma}\label{lemma: imag part of R_jj}
Assuming conditions $\CondTwo$ there exist positive constants $H_0$ depending on $u_0, V$ and positive constants $A_0, A_1$ depending on $H_0, \delta$ such that for all $1 \le p \le A_1(nv)^\kappa$ and $z \in \mathbb D$  we get
$$
\max_{j \in \T }\E \imag^p \RR_{jj}(z) \le H_0^p \imag^p s(z) + \frac{H_0^p p^{2p}}{(nv)^p}.
$$
\end{lemma}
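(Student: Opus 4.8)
The plan is to prove this as the imaginary--part companion of Lemma~\ref{main lemma non optimal} (\cite{GotzeNauTikh2015a}[Lemma~4.1]), reusing its scheme and its moment bounds, which hold in the same range $1\le p\le A_1(nv)^\kappa$. Taking imaginary parts in the Schur representation~\eqref{eq: R_jj representation 0} (here $\imag(-z)=-v$ and $\imag(X_{jj}/\sqrt n)=0$) gives the basic identity
\[
\imag\RR_{jj}=|\RR_{jj}|^2\bigl(v+\imag m_n^{(j)}(z)+\eta_j\bigr),\qquad \eta_j:=\frac1n\sum_{k\ne l\in\T_j}X_{jk}X_{jl}\imag\RR^{(j)}_{kl}+\frac1n\sum_{k\in\T_j}(X_{jk}^2-1)\imag\RR^{(j)}_{kk}.
\]
The useful features are that $\imag\RR^{(j)}=v\,\RR^{(j)}(\RR^{(j)})^{*}\succeq 0$, so the whole bracket equals $v+\tfrac1n\langle X_j,\imag\RR^{(j)}X_j\rangle\ge v>0$, and that $\eta_j$ is, conditionally on $\W^{(j)}$, a centred quadratic plus a centred diagonal form in the i.i.d.\ truncated variables $X_{jk}$, $k\in\T_j$.

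First I would peel off the leading term using the two elementary identities $|s(z)|^2(v+\imag s(z))=\imag s(z)$ (from $s(z+s)=-1$) and $\RR_{jj}-s=(\Lambda_n-\varepsilon_j)\RR_{jj}s$ (subtract the defining relations for $\RR_{jj}$ and $s$). Writing $v+\imag m_n^{(j)}=(v+\imag s)+(\imag m_n^{(j)}-\imag s)$ and $\imag m_n^{(j)}-\imag s=\imag\Lambda_n-\imag\varepsilon_{4j}$ (recall $m_n^{(j)}=m_n-\varepsilon_{4j}$, $|\varepsilon_{4j}|\le(nv)^{-1}$), one obtains
\[
\imag\RR_{jj}-\imag s=\bigl(|\RR_{jj}|^2/|s|^2-1\bigr)\imag s+|\RR_{jj}|^2\bigl(\imag\Lambda_n-\imag\varepsilon_{4j}\bigr)+|\RR_{jj}|^2\eta_j,
\]
so, since $\imag^p\RR_{jj}\le 2^{p-1}(\imag^p s+|\imag\RR_{jj}-\imag s|^p)$, it suffices to bound the $p$-th moment of each of the three summands. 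For the first: $|\RR_{jj}|^2/|s|^2=|1-s(\Lambda_n-\varepsilon_j)|^{-2}=|\RR_{jj}/s|^2$, hence its $p$-th moment is $\le |s|^{-2p}\E|\RR_{jj}|^{2p}\le(C_0/c_0)^{2p}$ by Lemma~\ref{main lemma non optimal} and the bound $|s(z)|\ge c_0=c_0(u_0)>0$ on $\mathbb D$; this contributes a harmless term $\le C^p\imag^p s$.

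For the second summand, $|\imag\Lambda_n|\le C\sqrt{|T_n|}$ by~\eqref{eq: abs imag lambda main result section}; after a H\"older split against $\E|\RR_{jj}|^{2pq}\le C_0^{2pq}$ with $q$ close to $1$ (this is why the admissible $A_1$ here is a fixed fraction of the one in Lemma~\ref{main lemma non optimal}), the factor $\E|T_n|^{pq/2}$ is controlled, via the same recursion that underlies Theorem~\ref{th: general bound} (expand $T_n$ through the $\varepsilon_j$ and apply the quadratic--form moment inequalities \cite{GotzeNauTikh2015a}[Theorems~A.1--A.2]), in terms of $\mathcal A(\kappa p')$ with $p'\le p$, which is available from the inductive hypothesis of the present lemma; together with $|\varepsilon_{4j}|\le(nv)^{-1}$ this summand is $\le C^p\imag^p s+C^pp^{2p}(nv)^{-p}$ once $H_0$ is chosen large. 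The decisive summand is $\E(|\RR_{jj}|^{2p}|\eta_j|^p)$. Conditioning on $\W^{(j)}$, the quadratic--form inequalities give $\E_{(j)}|\eta_j|^p\le(Cp)^p\bigl(\tfrac1{n^2}\Tr(\imag\RR^{(j)})^2\bigr)^{p/2}+(\text{truncation remainder})$, and $\tfrac1{n^2}\Tr(\imag\RR^{(j)})^2\le\tfrac1{nv}\imag m_n^{(j)}$ because $\imag\RR^{(j)}$ has operator norm $\le v^{-1}$ and trace $n\,\imag m_n^{(j)}$. Using $\imag m_n^{(j)}\le\imag s+|\imag\Lambda_n|+(nv)^{-1}$ once more and splitting $(\imag s/(nv))^{p/2}\le\tfrac12\imag^p s+\tfrac12(nv)^{-p}$ (AM--GM against $((Cp)^2/(nv))^{p/2}$), the $\imag s$--part is absorbed, the $(nv)^{-1}$--part gives $(Cp)^pp^{?}\dots$, actually $C^pp^{p}(nv)^{-p}$, and the $|\imag\Lambda_n|^{p/2}$--part, estimated inductively by $(p^2/(nv))^{p/2}$ and multiplied by $(Cp)^p$, produces exactly $C^pp^{2p}(nv)^{-p}$; the $|\RR_{jj}|^{2p}$ prefactor is handled by a further H\"older step against Lemma~\ref{main lemma non optimal}. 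Collecting, choosing $H_0=H_0(u_0,V)$ large and $A_0,A_1=A_0,A_1(\delta,H_0)$ large enough that every truncation remainder (which carries powers of $n^\alpha$, balanced against $p\le A_1(nv)^\kappa$ with $\kappa=(1-2\alpha)/2$) and the $\varepsilon_{1j},\varepsilon_{4j}$ contributions are dominated, the recursion closes by induction on $p$.

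I expect the main obstacle to be precisely the coupling of $|\RR_{jj}|^{2p}$ with the fluctuation $\eta_j$ and with $\imag m_n^{(j)}$: one must not pull $|\RR_{jj}|^{2p}$ out crudely, since that would destroy the exact cancellation $|s|^2(v+\imag s)=\imag s$ on which the leading order rests, so the split has to be organised around $\imag s$ first and only the genuinely small remainders ($\imag\Lambda_n$, $\varepsilon_{4j}$, $\eta_j$) may be separated; and one must track the polynomial-in-$p$ factors through the quadratic--form inequalities and the $T_n$/$\mathcal A$ feedback carefully enough to land on $p^{2p}(nv)^{-p}$ and no worse, which is exactly the bookkeeping that fixes the ranges $v\ge v_0$ and $p\le A_1(nv)^\kappa$.
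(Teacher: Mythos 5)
The paper does not prove this lemma here; it simply cites \cite{GotzeNauTikh2015a}[Lemma~5.1], whose argument parallels the sub-Gaussian version proved in Section~4 via Lemmas~\ref{lemma recurence relation for imag R_jj}, \ref{lem: imag RR jj recur} and \ref{lemma: imag part of R_jj gauss}. Your starting identity is genuinely different and arguably cleaner: taking imaginary parts in the Schur complement \eqref{eq: R_jj representation 0} gives $\imag\RR_{jj}=|\RR_{jj}|^2\bigl(v+\imag m_n^{(j)}+\eta_j\bigr)$ exactly, and you peel off $\imag s$ using $|s|^2(v+\imag s)=\imag s$; the paper instead derives a pointwise recursive inequality (Lemma~\ref{lemma recurence relation for imag R_jj}) in which $\imag\RR_{jj}$ reappears on the right with the small prefactor $|\varepsilon_j|+|T_n|^{1/2}$. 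Your three-term decomposition, the AM--GM absorption against $\imag s$, and the bookkeeping that lands on $p^{2p}(nv)^{-p}$ are all correct.

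The genuine gap is the circularity that you name but do not close. Your estimates of the second and third summands invoke $\E|\imag\Lambda_n|^{q}\lesssim(q^2/(nv))^q$, equivalently $\E|T_n|^q\lesssim\mathcal E_q$ via Theorem~\ref{th: general bound}; but $\mathcal E_q$ is built from $\mathcal A(\kappa q)=\max_{|\J|\le1}\max_j\E^{1/(\kappa q)}\imag^{\kappa q}\RR^{(\J)}_{jj}$, which is exactly the object the present lemma must supply. The same feedback reappears when you bound $\E_{(j)}|\eta_j|^{p}$ by $(Cp)^p\bigl(\imag m_n^{(j)}/(nv)\bigr)^{p/2}$: the remaining $\E\,\imag^{p/2}m_n^{(j)}$ is again a moment of $\imag\RR^{(\J)}_{jj}$. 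You write that ``the recursion closes by induction on $p$'', but it does not: by Lyapunov $\mathcal A(\kappa q)\le\mathcal A(1)$ once $\kappa q\le1$, so even the base case $p=1$ already requires the same nontrivial input, and the crude bound $\imag\RR_{jj}\le v^{-1}$ is useless compared with the target $\imag s+1/(nv)$ when $v\sim n^{-1}\log n$. The loop must be broken the way Lemma~\ref{lem: imag RR jj recur} breaks it for the sub-Gaussian companion: a descent in $v$ combined with a decreasing induction on $|\J|$, initiated at $v=V$ and $L=[\log_{s_0}(V/v_0)]+1$ where $\imag\RR^{(\J)}_{jj}\le 1/V\le C\,\imag s(z)$ (cf.\ \eqref{bound of imag RR via imag s}) makes the target trivially true, and propagated downwards in $v$ via Lemma~\ref{appendix lemma imag resolvent relations on different v}: assume the bound for $|\J|\le L$ at all $v'\ge\tilde v$, bound every ingredient on the right of your identity at $v'=s_0 v$, and deduce the claim for $|\J|\le L-1$ at $v\ge\tilde v/s_0$. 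Carried out inside that double induction, your ``available from the inductive hypothesis'' becomes literally true, and the rest of your bookkeeping (the H\"older dilution that shrinks $A_1$, the truncation remainders balanced against $p\le A_1(nv)^\kappa$) goes through as stated.
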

\begin{proof}
See~\cite{GotzeNauTikh2015a}[Lemma~5.1].
\end{proof}
\begin{proof}[Proof of~\eqref{eq: main lemma third statement}]
Let $\tilde v = n^{-1} \log^\frac{1}{\kappa} n$, then $1 \le p \le A_1 (n \tilde v) = A_1 \log n$. Without loss of generality we may consider $p = A_1 \log n$ only . Using the notation introduced in \eqref{eq: Psi definition} the statement of  Lemma~\ref{lemma: imag part of R_jj} may be rewritten (up to constant) in the following way
\begin{equation}\label{eq: crude bound}
\max_{j \in \T }\E \imag^p \RR_{jj}(z) \le H_0^p p^p \Psi^p(z) \le H_0^p \log^p n \Psi^p(z)
\end{equation}
for all $V \geq v \geq \tilde v$. To prove~\eqref{eq: main lemma first statement } we need to descent from $\tilde v$ to $v_0$ while maintaining that $p = A_1 \log n$.   Applying Lemma~\ref{lemma: descent imag} below with $s_0: = \log^{\frac{1}{\kappa}-1} n$ we may show that for all $v \geq v_0$
$$
\max_{j \in \T} \E \imag^p \RR_{jj}(v) \le H_0^p \log^{\left(\frac{2}{\kappa}-1\right) p} n \Psi^p(z).
$$
It remains to remove the log factor from the r.h.s. on the previous equation. We may proceed as before and arrive at the following bound for any $j \in \T$
$$
\E \imag^p \RR_{jj}(v) \le C_1^p \Psi^p(z) + C_2 \E \imag^p \RR_{jj}^\y(v).
$$
See Lemma~\ref{lem: bound in the optimal region imag} below. It follows from Lemma~\ref{lemma: imag part of R_jj gauss} in the next section that
$$
\max_{j \in \T} \E \imag^p \RR_{jj}^\y(v) \le H_0^p \Psi^p(z).
$$
\end{proof}

\begin{lemma}\label{lemma: descent imag}
Let $C_0$ be some constant. Assume that for all $v' \geq \tilde v$ and all $1 \le p \le A_1(nv')^\kappa$ we have
\begin{equation}\label{eq: main assumption imag }
\max_{j \in \T} \E \imag^p \RR_{jj}(v') \le H_0^p \log^p n \Psi^p(z).
\end{equation}
Then for all $s_0 \geq 1, v \geq \tilde v/s_0$ we have
$$
\max_{j, k \in \T} \E \imag^p \RR_{j k}(v) \le s_0^{2p} H_0^p \log^p n \Psi^p(z).
$$
\end{lemma}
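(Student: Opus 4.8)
The plan is to follow the template of Lemma~\ref{lemma: descent}, with $|\RR_{jj}|$ replaced by $\imag\RR_{jj}$ and with one extra ingredient: rescaling the imaginary part of $z$ also rescales the target quantity $\Psi$. First I would fix $s_0 \ge 1$ and $v \ge \tilde v/s_0$ and set $v' := s_0 v$, so that $v' \ge \tilde v$; since then $(nv')^\kappa \ge (n\tilde v)^\kappa = \log n$, the hypothesis~\eqref{eq: main assumption imag } is available at the point $u+iv'$ for our exponent $1 \le p \le A_1\log n$. From the spectral representation $\imag\RR_{jj}(u+iv) = \sum_{k=1}^n v\,u_{kj}^2\bigl((\lambda_k(\W)-u)^2+v^2\bigr)^{-1}$ the map $v \mapsto v\,\imag\RR_{jj}(u+iv)$ is nondecreasing, hence $\imag\RR_{jj}(u+iv) \le s_0\,\imag\RR_{jj}(u+is_0v)$; this is precisely Lemma~\ref{appendix lemma resolvent relations on different v} applied to the pair $v \le v'$. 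Raising to the $p$-th power, taking expectations and invoking~\eqref{eq: main assumption imag } gives
$$
\E\,\imag^p\RR_{jj}(v) \;\le\; s_0^p\,\E\,\imag^p\RR_{jj}(s_0v) \;\le\; s_0^p H_0^p\log^p n\;\Psi^p(u+is_0v).
$$

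Next I would compare $\Psi(u+is_0v)$ with $\Psi(u+iv)$. Since $s(z)$ is the Stieltjes transform of the semicircle law, $v^{-1}\imag s(u+iv) = \int\bigl((\lambda-u)^2+v^2\bigr)^{-1}g_{sc}(\lambda)\,d\lambda$ is nonincreasing in $v$, so $\imag s(u+is_0v) \le s_0\,\imag s(u+iv)$; and $p\,(ns_0v)^{-1} \le p\,(nv)^{-1}$ trivially. Adding the two estimates yields $\Psi(u+is_0v) \le s_0\,\Psi(u+iv)$ for every $s_0 \ge 1$, and plugging this into the previous display produces $\E\,\imag^p\RR_{jj}(v) \le s_0^{2p} H_0^p\log^p n\,\Psi^p(z)$, which is the asserted bound for the diagonal entries. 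The two factors $s_0^p$ come from distinct sources — one from the monotonicity of $v\mapsto v\,\imag\RR_{jj}$, the other from the rescaling $\Psi(s_0v)\le s_0\Psi(v)$ — and it is the latter that upgrades the exponent from $p$ to $2p$.

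The extension to the entries $\RR_{jk}$ with $j \ne k$ is obtained from the diagonal bound exactly as in Lemma~\ref{lemma: descent}: the resolvent identity $\RR_{jk}(v)-\RR_{jk}(s_0v) = (s_0-1)v\,[\RR(v)\RR(s_0v)]_{jk}$, combined with the Cauchy--Schwarz inequality and Lemma~\ref{appendix lemma resolvent inequalities 1} to bound $|[\RR(v)\RR(s_0v)]_{jk}|$ by $v^{-1}\sqrt{\imag\RR_{jj}(v)\,\imag\RR_{kk}(s_0v)}$ (via the Ward identity $\sum_l|\RR_{jl}|^2 = v^{-1}\imag\RR_{jj}$), reduces everything to the estimate just established, the extra factors again being powers of $s_0$ absorbed into $s_0^{2p}$ after relabelling constants. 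Since every step is soft, there is no genuine obstacle here; the only things that require care are checking that $v'=s_0v$ stays in the range where~\eqref{eq: main assumption imag } may be invoked, and the bookkeeping of the powers of $s_0$ — in particular not overlooking the additional $s_0$ produced by $\Psi(s_0v)\le s_0\Psi(v)$, which is exactly what yields the stated $s_0^{2p}$ rather than $s_0^{p}$.
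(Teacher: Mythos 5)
Your diagonal argument is correct and is exactly the paper's: Lemma~\ref{appendix lemma imag resolvent relations on different v} gives $\imag\RR_{jj}(v)\le s_0\,\imag\RR_{jj}(s_0v)$, the hypothesis at level $s_0v\ge\tilde v$ supplies the first factor $s_0^p$, and the monotonicity $\Psi(u+is_0v)\le s_0\Psi(u+iv)$ (which, applied to the $\imag s$ part, is again Lemma~\ref{appendix lemma imag resolvent relations on different v}, and is trivial for the $p/(nv)$ part) supplies the second $s_0^p$. That accounting of the two factors of $s_0$ is precisely what the paper does, modulo a typographical slip in the paper's intermediate display.

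The off-diagonal step, however, does not close as you have set it up. You copy the template of Lemma~\ref{lemma: descent}: write $\RR_{jk}(v)-\RR_{jk}(s_0v)=(s_0-1)v[\RR(v)\RR(s_0v)]_{jk}$ and control the right-hand side by Cauchy--Schwarz plus the Ward identity. That bounds the \emph{difference}, but to conclude anything about $\imag\RR_{jk}(v)$ you would then still need a bound on $\RR_{jk}(s_0v)$ itself (or on $\imag\RR_{jk}(s_0v)$) — and, unlike in Lemma~\ref{lemma: descent}, the hypothesis~\eqref{eq: main assumption imag } of the present lemma only controls \emph{diagonal} entries $\E\,\imag^p\RR_{jj}(v')$; it says nothing about $\RR_{jk}(v')$ with $j\ne k$. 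Trying to patch this by bounding $|\RR_{jk}(s_0v)|^2\le\sum_l|\RR_{jl}(s_0v)|^2=(s_0v)^{-1}\imag\RR_{jj}(s_0v)$ introduces an extra factor $(s_0v)^{-p/2}$ that ruins the estimate. The correct (and the paper's) route is shorter: one never compares two levels of $v$ at all for the off-diagonal entries. Instead, working at the single level $v$, use
$$
\imag\RR_{jk}(v)=v\,[\RR(v)\RR(v)^{*}]_{jk}\le v\Bigl(\sum_{l}|\RR_{jl}(v)|^{2}\Bigr)^{1/2}\Bigl(\sum_{l}|\RR_{lk}(v)|^{2}\Bigr)^{1/2}=\sqrt{\imag\RR_{jj}(v)\,\imag\RR_{kk}(v)},
$$
via Lemma~\ref{appendix lemma resolvent inequalities 1}. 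Raising to the $p$-th power and applying Cauchy--Schwarz in expectation reduces the off-diagonal claim directly to the diagonal bound you have already established at level $v$, with no additional powers of $s_0$. So: keep your diagonal argument, but replace the off-diagonal step by the one-line Ward-identity reduction at $v$ itself; the resolvent identity and the detour through $s_0v$ should be discarded.
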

\begin{proof}
The proof is similar to the proof of Lemma~\ref{lemma: descent}. Applying Lemma~\ref{appendix lemma imag resolvent relations on different v} we get
$$
\E \imag^p \RR_{j j}(v) \le s_0^p \E \imag^p \RR_{j j}(s_0 v) \le s_0^p H_0^p \Psi^q(s_0 v) \le s_0^{2p} H_0^p \log^p n \Psi^q(v).
$$
We consider now the case $j \neq k$. Then
$$
\imag \RR_{jk} = v [\RR \RR^{*}]_{jk}  = v \sum_{l = 1}^n \RR_{jl} \RR_{lk}^{*} \le v \left(\sum_{l=1}^n|\RR_{jl}|^2 \right)^\frac12 \left(\sum_{l=1}^n|\RR_{lk}|^2 \right)^\frac12.
$$
Applying Lemma~\ref{appendix lemma resolvent inequalities 1} we get
$$
\imag \RR_{jk} \le \sqrt{\imag \RR_{jj} \imag \RR_{kk}}.
$$
It follows that
$$
\E \imag^p \RR_{j k}(v) \le s_0^{2p} H_0^p \log^p n \Psi^q(v).
$$
\end{proof}

\begin{lemma}\label{lem: bound in the optimal region imag}
For all $v \geq v_0$  and $5 \le p \le \log n$ there exists a constant $C_0>0$ such that
\begin{equation}\label{eq: matching 1 imag}
\E|\imag \RR_{jj}(v)|^p \le C_0^p \Psi^p(z) + \E|\imag \RR_{jj}^\y(v)|^p.
\end{equation}
\end{lemma}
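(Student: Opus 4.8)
The plan is to mimic, step by step, the replacement argument proving Lemma~\ref{lem: bound in the optimal region}, with the complex quantity $\RR_{jk}$ and the function $\varphi(z)=\bar z|z|^{p-2}$ replaced by the nonnegative real quantity $\imag\RR_{jj}$ and by $\psi(x):=x^{p-1}$. Keeping the notation $\W=\U+n^{-1/2}X_{ab}\EE^{(a,b)}$, $\V=\U+n^{-1/2}Y_{ab}\EE^{(a,b)}$ and $\RR,\RS,\RT$ for the resolvents of $\W,\V,\U$, one aims to show that each single replacement of $X_{ab}$ by $Y_{ab}$ produces
$$
\E(\imag\RR_{jj})^p=\mathcal I(p)+r_n(p),\qquad |r_n(p)|\le\frac{C^p\Psi^p(z)}{n^2}+\frac{\E(\imag\RR_{jj})^p}{n^2},
$$
with the \emph{same} $\mathcal I(p)$ (depending only on $n$, $p$, on $\RT$, and on $\E X_{ab}^s=\E Y_{ab}^s$ for $s\le4$) when $\RR$ is replaced by $\RS$. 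Subtracting the two identities and iterating over all $n(n+1)/2$ entries exactly as in~\eqref{eq: step}--\eqref{eq: step 2}, with $\rho=(1-\theta_2/n^2)(1-\theta_1/n^2)^{-1}\le1+4n^{-2}$, then yields $\E(\imag\RR_{jj})^p\le C\,\E(\imag\RR^\y_{jj})^p+C_0^p\Psi^p(z)$, which is~\eqref{eq: matching 1 imag} up to the harmless constant $C$ in front of the second term (which in the only use of the lemma is absorbed via the Gaussian bound $\E(\imag\RR^\y_{jj})^p\le H_0^p\Psi^p(z)$ of Lemma~\ref{lemma: imag part of R_jj gauss}).

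To derive the displayed identity I would write $\E(\imag\RR_{jj})^p=\E\,\imag\RR_{jj}\cdot\psi(\imag\RR_{jj})$, insert the resolvent expansion~\eqref{resolvent expansion R} into the first factor (using that $X_{ab}$ is real, so $\imag$ pulls through each $X_{ab}^\mu$), and split $\sum_{\mu=0}^m=\sum_{\mu=0}^4+\sum_{\mu=5}^m$. For the $\mu\le4$ part I would Taylor-expand $\psi(\imag\RR_{jj})$ about $\psi(\imag\RT_{jj})$ and expand each increment $\imag\RR_{jj}-\imag\RT_{jj}$ again via~\eqref{resolvent expansion R}; all monomials in $X_{ab}$ of total degree $\le4$ assemble into the moment-matched term $\mathcal I(p)$ (the exact analogue of~\eqref{eq: mathcal A(p) def}), and what remains has $X_{ab}$-degree $\ge5$. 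For these leftovers and for the $\mu\ge5$ terms I would insert the conditional version $\widetilde{\imag\RT}_{jj}:=\E(\imag\RR_{jj}\mid\mathfrak M^{(a,b)})$ and use Taylor's formula~\eqref{eq: Taylors formula} for $f(t):=\psi\bigl(\widetilde{\imag\RT}_{jj}+t(\imag\RR_{jj}-\widetilde{\imag\RT}_{jj})\bigr)$, whose derivatives obey $|f^{(l)}(t)|\le K_l\,p^l\,|\imag\RR_{jj}-\widetilde{\imag\RT}_{jj}|^l\bigl(\widetilde{\imag\RT}_{jj}+t(\imag\RR_{jj}-\widetilde{\imag\RT}_{jj})\bigr)^{p-1-l}$ — all three quantities are nonnegative, so no non-integer power of a negative number occurs, and this is where $p\ge5$ enters.

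The estimates then copy those of $\mathcal A_1,\mathcal A_2,\mathcal B^{(1)}_{\mu l},\mathcal B^{(2)}_{\mu l}$ in Lemma~\ref{lem: bound in the optimal region}, using: (i) by $\CondTwo$ one may assume $|X_{ab}|\le Dn^{1/2-\phi}$ with $\phi=\delta/(2(4+\delta))$, so $n^{-\mu/2}\E|X_{ab}|^\mu\le Cn^{-2-\phi(\mu-4)}$ for $\mu\ge4$; (ii) every off-diagonal $\RT_{jl}$ and every occurrence of $\imag\RT_{jj}$ (and of $\widetilde{\imag\RT}_{jj}$, by Jensen) is controlled by the crude bounds $\E^{1/(cp)}|\RT_{jl}|^{cp}\le C_0\log^{1/\kappa-1}n$ and $\E^{1/(cp)}(\imag\RT_{jj})^{cp}\le C_0\log^{2/\kappa-1}n\,\Psi(z)$ of Lemmas~\ref{lemma: descent} and~\ref{lemma: descent imag}, valid for $v\ge v_0$; (iii) any trailing $\RR$-factor is bounded deterministically by $\|\RR\|\le v^{-1}\le n$; (iv) H\"older's and Young's inequalities as in~\eqref{eq: A_2 estimate}. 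With the expansion depth $m$ chosen large (depending on $\phi$, hence on $\delta$), every residual monomial acquires a power $n^{-2-\varepsilon}$ or $n^{-2-\varepsilon p}$ for some $\varepsilon=\varepsilon(\delta)>0$; since $p\le\log n$ this absorbs the accompanying losses $p^{Cp}$, $C^{cp}$ and $\log^{c(\kappa)p}n=e^{c(\kappa)p\log\log n}$, leaving for each monomial either a term $\le n^{-2}\E(\imag\RR_{jj})^p$ or a term $\le n^{-2}C^p\Psi^p(z)$ — the latter whenever a crude $\imag$-bound from (ii) has been used, so that one factor of $\Psi(z)$ is picked up from the degree-$\mu$ resolvent polynomial $\imag[(\RT\EE^{(a,b)})^\mu\RT]_{jj}$ of the expanded leading factor $\imag\RR_{jj}$ together with $\Psi^{p-1}(z)$ from $\psi(\imag\RR_{jj})$.

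I expect the main obstacle to be precisely this $\Psi$-bookkeeping: unlike in Lemma~\ref{lem: bound in the optimal region}, where a bare additive constant $C^p$ is acceptable, here every additive remainder term must be shown to carry the full power $\Psi^p(z)$, which is essential in the regime (moderate $v$, or $u$ far from $[-2,2]$) where $\Psi(z)$ is small and a loose $C^p$ would destroy the bound; this forces one to track the $\imag s(z)$-smallness of $\imag[(\RT\EE^{(a,b)})^\mu\RT]_{jj}$ jointly with the $p-1$ factors supplied by $\psi(\imag\RR_{jj})$. Apart from this, no new phenomenon arises relative to the complex case, and the remaining verifications — that the degree-$\le4$ monomials really reassemble into a law-insensitive $\mathcal I(p)$, and that $m$ can be chosen uniformly in $p\le\log n$ thanks to the dilation trick behind Lemmas~\ref{lemma: descent} and~\ref{lemma: descent imag} — are routine.
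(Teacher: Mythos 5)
Your proposal follows the paper's argument essentially verbatim: the one-entry-at-a-time replacement scheme with $\rho^{n(n+1)/2}$ iteration, the Stein-type decomposition via $\psi(x)=x^{p-1}$ and the split $\mathcal A_0+\mathcal A_1+\mathcal A_2$, the conditional centering $\E(\imag\RR_{jj}\mid\mathfrak M^{(a,b)})=\imag\widetilde\RT_{jj}$, Taylor expansion in $\psi$, and above all the telescoping of $\imag[(\RT\EE^{(a,b)})^\mu\RT]_{jj}$ into products each containing one $|\imag\RT|$ factor, which via Lemma~\ref{lemma: descent imag} together with the $p-1$ copies of $\imag$ from $\psi$ supplies the full $\Psi^p(z)$ in every remainder — exactly the one point the paper singles out in its abbreviated write-up of $\mathcal B_1$. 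Your observation that a constant $C_2$ survives in front of $\E(\imag\RR^\y_{jj})^p$ is also consistent with the paper, which itself quotes the lemma in that form in the proof of~\eqref{eq: main lemma third statement} before absorbing $C_2$ via Lemma~\ref{lemma: imag part of R_jj gauss}.
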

\begin{proof}
The proof of this lemma is similar to the proof of Lemma~\ref{lem: bound in the optimal region}. We shall omit the  details, but emphasize some important points.  Applying~\eqref{resolvent expansion R} with some $m \geq 0$ we get
\begin{align*}
\imag \RR_{jj} &= \imag \RT_{jj} + \sum_{\mu=1}^m \frac{(-1)^\mu}{n^\frac{\mu}{2}} X_{ab}^\mu \imag [(\RT \EE^{(a,b)})^\mu \RT]_{jj} \\
&\qquad\qquad\qquad\qquad + \frac{(-1)^{m+1}}{n^\frac{m+1}2} \X_{ab}^{m+1} \imag[(\RT \EE^{(a,b)})^{m+1} \RR]_{jj}.
\end{align*}
Introduce the  function $\psi(x): = x^{p-1}$ and write $\E \imag^p \RR_{jj} = \E \imag \RR_{jj} \psi(\imag \RR_{jj})$. Similarly to~\eqref{eq: p moment of R_{jk}} we get
\begin{align}\label{eq: p moment of imag R}
\E \imag^p \RR_{jj} &= \sum_{\mu=0}^4 \frac{(-1)^\mu}{n^\frac{\mu}{2}} \E X_{ab}^\mu \imag[(\RT \EE^{(a,b)})^\mu \RT]_{jj} \psi(\imag \RR_{jj})
\nonumber\\
&+\sum_{\mu=5}^m \frac{(-1)^\mu}{n^\frac{\mu}{2}} \E X_{ab}^\mu \imag[(\RT \EE^{(a,b)})^\mu \RT]_{jj} \psi(\imag \RR_{jj}) \nonumber \\
&+  \frac{1}{n^\frac{m+1}2} \E\X_{ab}^{m+1} \imag [(\RT \EE^{(a,b)})^{m+1} \RR]_{jj}\psi(\imag \RR_{jj}) =: \mathcal A_0 + \mathcal A_1 + \mathcal A_2.
\end{align}
We shall keep the same notations as in the  proof of Lemma~\ref{lem: bound in the optimal region}.  Let us consider the term $\mathcal A_2$. Repeating the same arguments as in the proof of Lemma~\ref{lem: bound in the optimal region} for the corresponding terms and applying $\|\RR\| \le v^{-1}$ we get
\begin{align*}
|\mathcal A_2| &\le \frac{1}{n^\frac{m+1}2} \E|X_{ab}|^{m+1} \imag [(\RT \EE^{(a,b)})^{m+1} \RR]_{jj}\imag^{p-1} \RR_{jj} \\
&\le  \frac{\log^{c} n}{n^\frac{m+1}2 v} \E^\frac{1}{p}|\X_{ab}|^{(m+1)p} \E^\frac{p-1}{p} \imag^p \RR_{jj}.
\end{align*}
Since $|X_{jk}| \le D n^{\frac12 - \phi}$ we obtain that
$$
|\mathcal A_2| \le \frac{C \log^{c(\kappa)} n}{n^{\phi(m+1)} v }  \E^\frac{p-1}{p} \imag^p \RR_{jj}.
$$
We may choose $m$ such that $\phi(m+1) = 4$. Applying $ 2 = \frac{2}{p} + \frac{2(p-1)}{p}$, Young's inequality and $(nv)^{-1} \le \Psi(z)$ we obtain
\begin{equation}\label{eq: A_2 estimate imag}
|\mathcal A_2| \le \frac{C \Psi(z)}{n^{\frac{2}{p}+\frac{2(p-1)}{p}}} \E^\frac{p-1}{p}\imag^p \RR_{jj} \le \frac{C^p \Psi^p(z)}{n^{2}} + \frac{\E \imag^p \RR_{jj}}{n^2} .
\end{equation}
Let us consider $\mathcal A_1$ and split it into the sum $\mathcal A_1 = \mathcal A_{1,5} + ... + \mathcal A_{1,m}$. For an arbitrary $5 \le \mu \le m$ we get $\mathcal A_{1,\mu} = \mathcal B_1 + \mathcal B_2$, where
\begin{align*}
&\mathcal B_{1} = \frac{(-1)^\mu}{n^\frac{\mu}{2}} \E X_{ab}^\mu \imag[(\RT \EE^{(a,b)})^\mu \RT]_{jj} \psi(\imag \widetilde \RT_{jj}), \\
&\mathcal B_{2} = \frac{(-1)^\mu}{n^\frac{\mu}{2}} \E X_{ab}^\mu \imag[(\RT \EE^{(a,b)})^\mu \RT]_{jj} [\psi(\imag\RR_{jj}) -  \psi(\imag \widetilde \RT_{jj})]
\end{align*}
and $\widetilde \RT_{jj}$ are defined in~\eqref{tilde T definition}. The term $[(\RT \EE^{(a,b)})^\mu \RT]_{jj}$
is a sum of $2^{\mu}$ terms of the following type
$$
\RT_{j i_1} \RT_{i_1 i_2} ... \RT_{i_{\mu-1} i_{\mu}} \RT_{i_{\mu} j},
$$
where $i_l = a$ or $i_l = b$ for $l = 1, ... , \mu$. The  imaginary part of such a product may be  bounded from above by a product where  at least one factor  is $|\imag \RT_{i_l i_{l+1}}|$. All other factors may be bounded by their absolute values due to the first statement~\eqref{eq: main lemma first statement } of Lemma~\ref{main lemma}. Applying H\"older's inequality to this product and Lemma~\ref{lemma: descent imag} we get
$$
|\mathcal B_1|  \le \frac{C^p \Psi^p(z)}{n^{2}} + \frac{\E \imag^p \RR_{jj}}{n^2} .
$$
By the same arguments we get similar bounds for $\mathcal B_2$ and $\mathcal A_0$. We omit the details.
\end{proof}

\section{Bounds for moments of diagonal entries of the resolvent in the sub-Gaussian case }\label{sec: diagonal entries}
As  mentioned in the previous section we have to bound the moments of the diagonal entries of the resolvent in the sub-Gaussian case. We denote
\begin{equation*}
\widetilde{\mathbb D}: = \{z = u+iv \in \C: |u| \le u_0, V \geq v \geq \tilde v_0: = A_0 n^{-1} \},
\end{equation*}
where $u_0, V > 0 $ are any fixed real numbers and $A_0$ is some large constant determined below. Comparing with $\mathbb D$ we allow to descent to $A_0 n^{-1}$ along $v$.

We say that the conditions $\CondThree$ are satisfied if $X_{jk}$ satisfies the conditions $\Cond$ and have a sub-Gaussian distribution. It is well-known that the random variables $\xi$ are sub-gaussian if and only if $\E |\xi|^p  = O(p^\frac{p}{2})$
as $p \rightarrow \infty$. We define the sub-Gaussian norm of $\xi$ as
$$
\|\xi\|_{\psi_2}: = \sup_{p \geq 1} p^{-\frac12} \E^\frac{1}{p}|\xi|^p.
$$
In what follows we assume that $K: = \|X_{jk}\|_{\psi_2}$.
\begin{lemma}\label{main lemma gauss}
Assuming the conditions $\CondThree$ there exist a positive constant $C_0$ depending on $u_0, V$ and positive constants $A_0, A_1$ depending on $C_0$ and $K$
such that for all $z \in \widetilde{\mathbb D}$ and $1 \le p \le A_1 nv$ we have
\begin{equation}\label{eq: main lemma first statement gauss}
\max_{j,k \in \T} \E|\RR_{jk}(z)|^p \le C_0^p.
\end{equation}
and
\begin{equation}
\label{eq: main lemma second statement gauss}
\E \frac{1}{|z+m_n(z)|^p} \le C_0^p.
\end{equation}
\end{lemma}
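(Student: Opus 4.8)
The plan is to repeat the proof of Lemma~\ref{main lemma non optimal} (that is, \cite{GotzeNauTikh2015a}[Lemma~4.1]) almost verbatim, the only change being that every estimate for a linear or quadratic form in the entries of $\X$ which there relies on the truncation $|X_{jk}| \le Dn^\alpha$ is replaced by its sub-Gaussian counterpart. In \cite{GotzeNauTikh2015a}[Lemma~4.1] the restriction $p \le A_1(nv)^\kappa$ enters only through high-order moments of the form $n^{-m/2}\E|X_{jk}|^{mp}$, which for truncated entries are of size $(Dn^{\alpha-\frac12})^{mp}$ and force $\kappa=\frac{1-2\alpha}{2}$; for entries with $\|X_{jk}\|_{\psi_2}\le K$ one has $\E|X_{jk}|^{mp}\le (CK\sqrt{mp})^{mp}$, and the moment form of the Hanson--Wright inequality for quadratic forms is correspondingly sharper, so the same bookkeeping closes in the enlarged range $p\le A_1 nv$.

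First I would record the recursion and the moment estimates it feeds on. From \eqref{eq: R_jj representation 0},
$$
\RR_{jj} = \frac{-1}{z + m_n^{(j)}(z) - \eta_j}, \qquad \eta_j := \varepsilon_{1j} + \varepsilon_{2j} - \frac1n\sum_{k\in\T_j}(X_{jk}^2-1)\RR_{kk}^{(j)},
$$
where $z+m_n^{(j)}(z)$ is independent of the $j$-th row of $\X$ and $|m_n-m_n^{(j)}| = |\varepsilon_{4j}| \le C(nv)^{-1}$ deterministically by eigenvalue interlacing, so that $|z+m_n^{(j)}| \ge |z+m_n| - C(nv)^{-1}$. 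Conditioning on $\RR^{(j)}$ and using $\|\RR^{(j)}\|_2^2 = v^{-1}\imag\Tr\RR^{(j)}$ and $\|\RR^{(j)}\|\le v^{-1}$, the sub-Gaussian Hanson--Wright and Rosenthal--Bernstein inequalities give
$$
\E\bigl(|\eta_j|^p \,\big|\, \RR^{(j)}\bigr) \le \Bigl(\frac{Cp\,\imag m_n^{(j)}}{nv}\Bigr)^{p/2} + \Bigl(\frac{Cp}{nv}\Bigr)^p + \Bigl(\frac{Cp}{n}\Bigr)^{p/2}.
$$
The decisive point is that, once $\imag m_n^{(j)}$ is known to be $O(1)$, the right-hand side is at most $(CA_1)^{p/2}$, which can be made $<\tfrac12$ for $p\le A_1 nv$ with $A_1$ small.

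Next I would run the coupled bootstrap in $v$ exactly as in \cite{GotzeNauTikh2015a}[Lemma~4.1]. One descends from $v=V$ (where $\|\RR\|\le V^{-1}$ makes everything trivial) along a sufficiently fine grid, passing between neighbouring grid points by Lemma~\ref{appendix lemma resolvent relations on different v} and the resolvent identity, and proving \eqref{eq: main lemma first statement gauss} and \eqref{eq: main lemma second statement gauss} simultaneously at each level. For $\max_j\E|\RR_{jj}|^p\le C_0^p$ one writes $\E|\RR_{jj}|^p = \E\,\RR_{jj}\,\varphi(\RR_{jj})$ with $\varphi(w)=\bar w|w|^{p-2}$, substitutes the truncated geometric expansion of $(z+m_n^{(j)}-\eta_j)^{-1}$ in powers of $\eta_j/(z+m_n^{(j)})$, and bounds the resulting terms by Hölder's inequality using the conditional bound on $\eta_j$ above, the already available estimate $\E|z+m_n^{(j)}|^{-q}\le C^q$ for $q$ up to the current $p$, and the $O(1)$ bound on $\imag m_n^{(j)}$ inherited from the previous grid level; a Young-type inequality then absorbs the self-referential term $(\text{small})\cdot\E|\RR_{jj}|^p$. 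For $\E|z+m_n|^{-p}\le C_0^p$ one uses $z+s(z)=-s(z)^{-1}$ with $|s(z)|\le 1$, hence $|z+m_n|\ge 1-|\Lambda_n|$, together with the stability bound \eqref{eq: abs value lambda main result section} and the smallness of $\E|T_n|^p$ coming from \eqref{eq: m_n equation} (or from Theorem~\ref{th: general bound} after the harmless truncation at level $C\sqrt{\log n}$, which is vacuous when the entries are already a.s. bounded as for the matrix $\Y$), bootstrapped in tandem with $\E|\RR_{jj}|^p$. The off-diagonal bound $\E|\RR_{jk}|^p\le C_0^p$ then follows from $|\RR_{jk}|^2\le|\RR_{jj}||\RR_{kk}|$ (Lemma~\ref{appendix lemma resolvent inequalities 1}).

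The main obstacle is the bookkeeping that keeps the two estimates genuinely decoupled along the descent and that shows $\imag m_n^{(j)}$ stays $O(1)$ rather than merely $O(v^{-1})$: the bound on $\E|z+m_n|^{-p}$ uses $\E|T_n|^p$ and hence, through Theorem~\ref{th: general bound}, control of $\mathcal A(\kappa p)=\max_j\E^{1/(\kappa p)}\imag^{\kappa p}\RR_{jj}$, whereas the bound on $\E|\RR_{jj}|^p$ uses $\E|z+m_n^{(j)}|^{-p}$. One therefore orders the induction jointly on the grid value of $v$ and on $p$ so that at each step only strictly previously established quantities are invoked, and checks that the constants $C_0,A_0,A_1$ (depending only on $u_0,V,K$) do not deteriorate along the descent — this is the same delicate argument as in \cite{GotzeNauTikh2015a}[Lemma~4.1], to be re-verified with the sub-Gaussian constants and the enlarged range $p\le A_1 nv$; no new idea is required beyond the replacement of the moment inequalities.
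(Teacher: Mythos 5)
Your high-level strategy is the same as the paper's: bootstrap in $v$ by repeated descent with the scaling lemma (Lemma~\ref{appendix lemma resolvent relations on different v}), proving the two estimates in tandem, and replacing the truncation-based moment bounds for linear/quadratic forms by sub-Gaussian (Hanson--Wright/Rosenthal) bounds, which is precisely what enlarges the admissible range of $p$ from $A_1(nv)^\kappa$ to $A_1 nv$. Where you diverge is in the engine that propagates the moment bound at each grid level. You propose a Stein-type decomposition $\E|\RR_{jj}|^p = \E\,\RR_{jj}\,\varphi(\RR_{jj})$ with a truncated geometric expansion of the Schur complement $(z+m_n^{(j)}-\eta_j)^{-1}$, after which self-referential terms are absorbed by Young's inequality. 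The paper does something simpler: it invokes the \emph{pointwise} deterministic inequality of Lemma~\ref{lemma: recurence relation for R_jj} (which goes back to~\cite{Schlein2014}[Prop.~2.2]),
$$|\RR_{jj}^{(\J)}| \le c_0\bigl(1 + |T_n^{(\J)}|^{1/2}|\RR_{jj}^{(\J)}| + |\varepsilon_j^{(\J)}||\RR_{jj}^{(\J)}|\bigr),\qquad \frac{1}{|z+m_n^{(\J)}|}\le c_0\Bigl(1+\frac{|T_n^{(\J)}|^{1/2}}{|z+m_n^{(\J)}|}\Bigr),$$
raises these to power $q$, and closes by Cauchy--Schwarz against $\E|\RR_{jj}^{(\J)}(v)|^{2q}$, which is controlled via the scaling lemma and the bound at the previous level $s_0 v$. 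This avoids any geometric expansion of the Schur complement (hence any hypothesis on $|\eta_j|/|z+m_n^{(j)}|$), any Taylor/conditional-centering trick, and any absorption step. Your Stein-type route is the engine the paper deploys in Lemma~\ref{lem: bound in the optimal region} and in Theorem~\ref{th: general bound}, not here; it could plausibly be carried through, but it is substantially heavier than what is needed, and you would have to be careful that the remainder of the geometric expansion is controlled without already knowing the conclusion (the paper's pointwise inequality makes that issue disappear). One further bookkeeping point that your sketch does not make explicit: because the Hanson--Wright bound on $\varepsilon_j^{(\J)}$ involves $\imag m_n^{(\J,j)}$ at the \emph{current} $v$, which is estimated from the previous level only at cost of enlarging $\J$ by one, the descent in the paper (Lemma~\ref{lemma step for resolvent}) is run uniformly over $|\J|\le L$ with a budget $L\sim\log_{s_0}(V/\tilde v_0)$ that shrinks by one at each step; your ``inherited from the previous grid level'' phrasing alludes to this, but the uniform-in-$\J$ structure is what actually closes the induction.
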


The proof of Lemma~\ref{main lemma gauss}  is based on several auxiliary results and will be given at the end of this section. In this proof  will shall  use  ideas from~\cite{GotzeTikh2014rateofconv} and~\cite{GotTikh2015}. One of main ingredients of the proof is the descent method for $\RR_{jj}$ which is based on Lemma~\ref{lemma: recurence relation for R_jj} below and Lemma~\ref{appendix lemma resolvent relations on different v} in the Appendix, which in this form appeared in~\cite{Schlein2014}. Comparing the result of Lemma~\ref{main lemma gauss} with~\cite{Schlein2014}[Lemma~3.4] that in the latter the power $p$ is bounded from above by $(nv)^\frac14$, which is non-optimal.

Since $u$ is fixed and $|u| \le u_0$ we shall omit $u$ from the notation of the resolvent and denote $\RR(v): = \RR(z)$. Sometimes in order to simplify notations we shall also omit the  argument $v$ in $\RR(v)$ and just write $\RR$.

We also estimate the moments of the imaginary part of the diagonal entries of the  resolvent.  Recall that (see definition~\eqref{eq: Psi definition})
\begin{equation*}\label{eq: Psi definition gauss}
\Psi(z): = \imag s(z) + \frac{p}{nv}.
\end{equation*}
To simplify notations we will often write $\Psi(v)$ and $\Psi$ instead of $\Psi(z)$.
\begin{lemma}\label{lemma: imag part of R_jj gauss}
Assuming conditions $\CondThree$ there exist a  positive constant $H_0$ depending on $u_0, V$ and positive constants $A_0, A_1$ depending on $H_0$ and $K$ such that for all $1 \le p \le A_1nv$ and $z \in \widetilde{\mathbb D}$  we get
$$
\max_{j \in \T }\E|\imag \RR_{jj}(z)|^p \le H_0^p \Psi^p(z).
$$
\end{lemma}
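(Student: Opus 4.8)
This is the sub-Gaussian counterpart of Lemma~\ref{lemma: imag part of R_jj}, and the plan is to follow that scheme — a self-consistent estimate for $M_p:=\max_{j\in\T}\E^{1/p}\imag^p\RR_{jj}(z)$ combined with a descent in $v$ based on the resolvent-smoothing relation of Lemma~\ref{appendix lemma imag resolvent relations on different v}, exactly as in the proof of Lemma~\ref{main lemma gauss} — but now feeding in the sub-Gaussian large-deviation inequalities for linear and quadratic forms in the $X_{jk}$ and using Lemma~\ref{main lemma gauss} as the a priori input. Two things must come out sharper than in Lemma~\ref{lemma: imag part of R_jj}: the admissible range should be the full $p\le A_1 nv$ (not $p\le A_1(nv)^\kappa$), and the remainder should be $p^{p}(nv)^{-p}$ (not $p^{2p}(nv)^{-p}$), so that $\imag^p s(z)+p^{p}(nv)^{-p}\asymp\Psi^p(z)$. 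Both gains are consequences of the absence of truncation under $\CondThree$: the Hanson--Wright and Bernstein moment bounds then hold with sub-Gaussian constants for all $p$, and their ``heavy'' term carries only a single power of $p$.

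The algebraic engine is the exact identity obtained from~\eqref{eq: R_jj representation 0}: writing the denominator as $A_j=-z-m_n(z)+\varepsilon_j$ with $\varepsilon_j=\varepsilon_{1j}+\varepsilon_{2j}+\varepsilon_{3j}+\varepsilon_{4j}$, one has $\imag A_j=-\bigl(v+\imag m_n-\imag\varepsilon_j\bigr)\le -v$ and hence
$$
\imag\RR_{jj}=|\RR_{jj}|^2\bigl(v+\imag m_n(z)-\imag\varepsilon_j\bigr).
$$
Using $\imag m_n=\imag s+\imag\Lambda_n$ and the fixed-point relation $\imag s(z)=(v+\imag s(z))/|z+s(z)|^2$ — together with $|s(z)|\ge c(u_0,V)>0$ on $\widetilde{\mathbb D}$, so that $v+\imag s\le C(u_0,V)\imag s$ — one raises the identity to the $p$-th power and takes expectations. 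By Lemma~\ref{main lemma gauss}, $\E|\RR_{jj}|^q\le C_0^q$ and $\E|z+m_n|^{-q}\le C_0^q$; hence H\"older's inequality turns the ``main'' piece $v+\imag s$ into a quantity $\le C^p\imag^p s(z)\le C^p\Psi^p(z)$, while the contributions of $\imag\Lambda_n$ and of $\imag\varepsilon_j$ are treated as errors.

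For the errors: $|\varepsilon_{4j}|\le C(nv)^{-1}$ deterministically by interlacing, and the same identity gives $\imag m_n^{(j)}=\tfrac{n}{n-1}(\imag m_n-\imag\varepsilon_{4j})$; $\varepsilon_{1j}=X_{jj}/\sqrt n$ is sub-Gaussian; and, conditionally on $\RR^{(j)}$, using $\|\RR^{(j)}\|\le v^{-1}$, $\sum_{k,l}|\RR^{(j)}_{kl}|^2=v^{-1}\imag\Tr\RR^{(j)}$ and $\max_k|\RR^{(j)}_{kk}|\le v^{-1}$,
$$
\E_j|\varepsilon_{2j}|^q+\E_j|\varepsilon_{3j}|^q\ \le\ \Bigl(\frac{Cq\,\imag m_n^{(j)}(z)}{nv}\Bigr)^{q/2}+\Bigl(\frac{Cq}{nv}\Bigr)^{q},
$$
while $|\imag\Lambda_n|\le C\sqrt{|T_n|}$ by~\eqref{eq: abs imag lambda main result section}. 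After H\"older's inequality (which introduces only exponents $\le Cp$, still admissible after shrinking $A_1$) the error terms are bounded by $C^p\bigl[(p\,\imag m_n^{(j)}/(nv))^{p/2}+(p/(nv))^p\bigr]$ apart from the $|T_n|$-term. Here the descent enters: $\imag m_n^{(j)}(z)$ and $|T_n|$ (equivalently $\imag\Lambda_n$) are controlled at the slightly larger value $v'=2v$, where the estimate is already available by downward induction — namely $\imag m_n^{(j)}(z)\lesssim M_p(v')\lesssim\Psi(v')\lesssim\Psi(z)$ and $\E|T_n|^p$ at level $v'$ suitably small — and the factor $2^p$ lost in the smoothing relation $\imag\RR_{jj}(v)\le 2\,\imag\RR_{jj}(2v)$ is recovered by the recursion at level $v$. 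Collecting everything yields a self-consistent inequality of the form $M_p^p\le C^p\Psi^p(z)+C^p\bigl(p\,M_p/(nv)\bigr)^{p/2}+C^p\bigl(p/(nv)\bigr)^p$, and one application of Young's inequality absorbs the middle term into $\tfrac12 M_p^p$ plus a multiple of $(p/(nv))^p\le\Psi^p(z)$, which gives $M_p\le H_0\Psi(z)$.

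The main obstacle is the closing step. One must avoid circularity — the errors involve $\imag\Lambda_n$ (controlled only through $\E|T_n|^p$, which in turn feeds on $\mathcal A(q)$) and, via $\imag m_n^{(j)}$, on $M_p$ itself — which forces organising the argument as a \emph{self-improving} descent in $v$, so that the constant $H_0$ does not deteriorate over the $\sim\log n$ doubling steps from $V$ down to $A_0 n^{-1}$. The second delicate point, and the one responsible for the improvement over Lemma~\ref{lemma: imag part of R_jj}, is to verify that the ``operator-norm'' contribution to $\E_j|\varepsilon_{2j}|^q$ and $\E_j|\varepsilon_{3j}|^q$ is genuinely $(q/(nv))^q$: under $\CondTwo$ the truncation $|X_{jk}|\le Dn^\alpha$ inflates it to $(q^2/(nv))^q$, and it is precisely the sub-Gaussian tails that remove the extra power of $q$, so that Young's inequality is applied once rather than twice and the remainder stays $p^{p}(nv)^{-p}$ instead of $p^{2p}(nv)^{-p}$.
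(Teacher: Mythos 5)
Your proposal follows the same strategy as the paper's proof: the algebraic identity for $\imag\RR_{jj}$ coming from the quadratization formula (the paper's Lemma~\ref{lemma recurence relation for imag R_jj}), the sub-Gaussian Hanson--Wright moment bounds for $\varepsilon_{2j},\varepsilon_{3j}$ (Lemmas~\ref{appendix lemma varepsilon_2-3},~\ref{appendix lemma imag varepsilon_2-3}), the a priori inputs $\E|\RR_{jj}|^q\le C_0^q$ and $\E|z+m_n|^{-q}\le C_0^q$ from Lemma~\ref{main lemma gauss}, and a geometric descent in $v$ based on the smoothing relation of Lemma~\ref{appendix lemma imag resolvent relations on different v}, with a trivial base case at $v\asymp 1$. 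You also correctly identify the two gains that sub-Gaussianity buys over Lemma~\ref{lemma: imag part of R_jj}: the full range $p\le A_1 nv$ and the remainder $p^{p}(nv)^{-p}$ rather than $p^{2p}(nv)^{-p}$, traced to the single power of $q$ in the Hanson--Wright operator-norm term.

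The one organizational difference worth flagging: the paper's inductive step, Lemma~\ref{lem: imag RR jj recur}, carries the minor index set $\J$ as part of the induction hypothesis — assuming the bound for all $|\J|\le L$ at the scale $s_0 v$, so that $\imag m_n^{(\J,j)}$ (one minor deeper) and $\E\imag^{2q}\RR_{jj}^{(\J)}(s_0 v)$ (one moment doubled, one scale higher) are both covered outright, and the conclusion is drawn at level $|\J|\le L-1$. This avoids any self-referential absorption. You instead collapse the minor hierarchy via $\imag m_n^{(j)}=\tfrac{n}{n-1}(\imag m_n-\imag\varepsilon_{4j})$ and close a fixed-scale self-consistency by Young's inequality. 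Both routes work, but note that the cross term $(|\varepsilon_j^{(\J)}|+|T_n^{(\J)}|^{1/2})\,\imag\RR_{jj}^{(\J)}$ in the recurrence produces, after Cauchy--Schwarz, a $(2p)$-th moment of $\imag\RR_{jj}$, not the $M_p^{p/2}$ that appears in the self-consistent inequality you display; that term is actually controlled by descending to $2v$ (exactly as in the paper), not by Young's at fixed $v$, while Young's is what handles the $\imag m_n^{(j)}$ contribution coming from Hanson--Wright. Your plan already uses the descent for this term, so the idea is sound; only the summary inequality is slightly mis-stated.
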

Note  that the values of $A_0$ and $A_1$ in this lemma are different from the values of corresponding quantities in Lemma~\ref{main lemma gauss}, but for simplicity we shall use the same notations. Applying both Lemmas we shall restrict the upper limit of the moment of order $p$ to the minimum of the two $A_1$'s and  the lower end of the range of $v$ to the maximum of the two $A_0$'s via $v \ge A_0n^{-1}$

For any $j \in \T_{\J}$ we may express $\RR_{jj}^{(\J)}$ in the following way
\begin{equation}\label{eq: representation for RR_jj}
\RR_{jj}^{(\J)} = \frac{1}{-z + \frac{X_{jj}}{\sqrt n} - \frac{1}{n}\sum_{l,k \in \T_{\J,j}} X_{jk} X_{jl} \RR_{lk}^{(\J,j)}}.
\end{equation}
Let $\varepsilon_j^{(\J)} : = \varepsilon_{1j}^{(\J)} + \varepsilon_{2j}^{(\J)}+\varepsilon_{3j}^{(\J)}+\varepsilon_{4j}^{(\J)}$, where
\begin{align*}
&\varepsilon_{1j}^{(\J)} =  \frac{1}{\sqrt n}X_{jj}, \quad \varepsilon_{2j}^{(\J)} = -\frac{1}{n}\sum_{l \ne k \in \T_{\J,j}} X_{jk} X_{jl} \RR_{kl}^{(\J,j)},
\quad \varepsilon_{3j}^{(\J)} = -\frac{1}{n}\sum_{k \in T_{\J,j}} (X_{jk}^2 -1) \RR_{kk}^{(\J)}(z), \\
&\varepsilon_{4j}^{(\J)}= \frac{1}{n} (\Tr \RR^{(\J)} - \Tr \RR^{(\J,j)}(z)).
\end{align*}
We also introduce the quantities $\Lambda_n^{(\J)}(z) : = m_n^{(\J)} (z) - s(z)$ and
$$
T_n^{(\J)}: = \frac{1}{n} \sum_{j \in \T_{\J}} \varepsilon_j^{(\J)}\RR_{jj}^{(\J)}.
$$
The following lemma, Lemma~\ref{lemma: recurence relation for R_jj}, allows  to  recursively estimate the moments of the diagonal entries of the resolvent.
 The proof of the first part of this lemma may be found in~\cite{Schlein2014} and
it is included here for the readers convenience.
\begin{lemma}\label{lemma: recurence relation for R_jj}
For an arbitrary set $\J \subset \T$ and all $j \in \T_\J$ there exist a positive constant $c_0$ depending on $u_0, V$ only such that for all $z = u + i v$ with $V \geq v > 0$ and $|u| \le u_0$ we have
\begin{equation}\label{inequality for R_jj}
|\RR_{jj}^{(\J)}| \le c_0\Big(1 + |T_n^{(\J)}|^{\frac{1}{2}}|\RR_{jj}^{(\J)}| + |\varepsilon_j^{(\J)}||\RR_{jj}^{(\J)}|\Big)
\end{equation}
and
\begin{equation}\label{inequality for 1/(z+m_n(z))}
\frac{1}{|z+m_n^{(\J)}(z)|} \le c_0\left(1 + \frac{|T_n^{(\J)}|^{\frac{1}{2}}}{|z+m_n^{(\J)}(z)|}\right).
\end{equation}
\end{lemma}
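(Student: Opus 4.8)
The plan is to start from the Schur complement representation~\eqref{eq: representation for RR_jj} and rewrite it, exactly as in~\eqref{eq: R_jj representation} but with the removed index set $\J$ carried along, in the form
\begin{equation*}
\RR_{jj}^{(\J)} = -\frac{1}{z + m_n^{(\J)}(z)} + \frac{1}{z + m_n^{(\J)}(z)}\,\varepsilon_j^{(\J)}\RR_{jj}^{(\J)},
\end{equation*}
where the $\varepsilon_{ij}^{(\J)}$ collect the off-diagonal quadratic form, the centered diagonal quadratic form, the diagonal entry $X_{jj}/\sqrt n$, and the rank-one trace correction $\tfrac1n(\Tr\RR^{(\J)}-\Tr\RR^{(\J,j)})$. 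Summing this identity over $j\in\T_\J$ (dividing by $n$) gives the analogue of~\eqref{eq: m_n equation}, namely $1 + z m_n^{(\J)} + (m_n^{(\J)})^2 = T_n^{(\J)} + \text{(lower-order)}$; combining with the defining equation~\eqref{eq: s(z) equation} for $s(z)$ one obtains, just as in Section~\ref{proof of the main result}, that $\Lambda_n^{(\J)} = T_n^{(\J)}/(z + m_n^{(\J)} + s(z))$, hence a relation of the type $z + m_n^{(\J)} = -s(z) + \Lambda_n^{(\J)}$ with $\Lambda_n^{(\J)}$ controlled by $|T_n^{(\J)}|$ through~\eqref{eq: abs value lambda main result section}; note that removing $|\J|\le 1$ rows changes $m_n$ by $O(1/(nv))$, so the edge estimates are not affected.

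The second step is the key elementary observation: $|z + 2s(z)| = |b(z)|$ is bounded below on the relevant region, and moreover $\imag(z+m_n^{(\J)}(z)) = v + \tfrac1n\sum_{k}\imag\RR_{kk}^{(\J)} \ge v > 0$, so $|z + m_n^{(\J)}(z)|$ can never be too small unless $\Lambda_n^{(\J)}$ is of the same order as $|b(z)|$; in the latter regime~\eqref{eq: abs value lambda main result section} forces $|\Lambda_n^{(\J)}|\lesssim \sqrt{|T_n^{(\J)}|}$. Either way one gets
\begin{equation*}
\frac{1}{|z + m_n^{(\J)}(z)|} \le c_0\Bigl(1 + |T_n^{(\J)}|^{1/2}\frac{1}{|z+m_n^{(\J)}(z)|}\Bigr),
\end{equation*}
which is~\eqref{inequality for 1/(z+m_n(z))}. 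Plugging this back into the rewritten Schur identity and using $|\varepsilon_j^{(\J)}\RR_{jj}^{(\J)}|\le |\varepsilon_j^{(\J)}||\RR_{jj}^{(\J)}|$ gives
\begin{equation*}
|\RR_{jj}^{(\J)}| \le \frac{1}{|z+m_n^{(\J)}|} + \frac{1}{|z+m_n^{(\J)}|}|\varepsilon_j^{(\J)}||\RR_{jj}^{(\J)}| \le c_0\bigl(1 + |T_n^{(\J)}|^{1/2}|\RR_{jj}^{(\J)}| + |\varepsilon_j^{(\J)}||\RR_{jj}^{(\J)}|\bigr),
\end{equation*}
which is~\eqref{inequality for R_jj}; absorbing the various $O(1)$ and $O(1/(nv))$ terms into $c_0$ and noting that $|T_n^{(\J)}|^{1/2}\cdot\frac{1}{|z+m_n^{(\J)}|}$ reappears multiplied by $|\varepsilon_j^{(\J)}|$ bounded terms is routine.

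The main obstacle, I expect, is bookkeeping rather than conceptual: one must check that the sublinear bound~\eqref{eq: abs value lambda main result section} (proved for $\J=\emptyset$ in Lemma~B.1 of~\cite{GotzeNauTikh2015a}) survives the removal of at most one index, i.e. that the stability of the fixed-point equation $1 + zm + m^2 = T_n^{(\J)}$ near its root $s(z)$ is uniform in $\J$ with $|\J|\le 1$ — this uses $\imag m_n^{(\J)}\ge 0$ and the explicit lower bound $|b(z)|\asymp\sqrt{\gamma+v}$ — and that the first part of the lemma, as attributed to~\cite{Schlein2014}, is simply this computation written without the perturbation term $|\varepsilon_j^{(\J)}|$ because the trivial bound $|\RR_{jj}^{(\J)}|\le v^{-1}$ lets one reabsorb it. Once these uniformities are in hand the two displayed inequalities follow directly, with $c_0$ depending only on $u_0$ and $V$ through the lower bound for $|b(z)|$.
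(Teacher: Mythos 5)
Your overall approach---the Schur complement decomposition, the perturbed quadratic relation $1 + z m_n^{(\J)} + (m_n^{(\J)})^2 = T_n^{(\J)} + O(|\J|/n)$, and a two-case argument driven by the sublinear bound $|\Lambda_n^{(\J)}|\le C\sqrt{|T_n^{(\J)}|}$ from~\eqref{eq: abs value lambda main result section}---is the right one, and it is what the cited Lemma~4.2 of~\cite{GotzeNauTikh2015a} does. However two identities in your write-up are incorrect, and the first, taken literally, breaks the argument.

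The assertion that ``$|z+2s(z)|=|b(z)|$ is bounded below on the relevant region'' is false: as the paper itself records, $|b(z)|\asymp\sqrt{\gamma+v}$, which vanishes at the edges $|u|=2$, $v\to0$. The quantity that is bounded below is $|z+s(z)|$, for which $|z+s(z)|=1/|s(z)|\ge 1$ on $\C^+$ because $s(z)\bigl(z+s(z)\bigr)=-1$ and $|s(z)|\le 1$. Correspondingly, the line ``$z+m_n^{(\J)}=-s(z)+\Lambda_n^{(\J)}$'' should read $z+m_n^{(\J)}=\bigl(z+s(z)\bigr)+\Lambda_n^{(\J)}=-\tfrac{1}{s(z)}+\Lambda_n^{(\J)}$. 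With the correct identity the dichotomy is: if $|\Lambda_n^{(\J)}|\le\tfrac12$ then $|z+m_n^{(\J)}|\ge 1-\tfrac12=\tfrac12$, giving $1/|z+m_n^{(\J)}|\le 2$; otherwise $|\Lambda_n^{(\J)}|>\tfrac12$, the sublinear bound forces $\sqrt{|T_n^{(\J)}|}\ge 1/(2C)$, and therefore $\frac{1}{|z+m_n^{(\J)}|}\le 2C\frac{\sqrt{|T_n^{(\J)}|}}{|z+m_n^{(\J)}|}$, which is~\eqref{inequality for 1/(z+m_n(z))}.

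The second problem is the claim that~\eqref{inequality for R_jj} follows by ``plugging~\eqref{inequality for 1/(z+m_n(z))} back'' into the Schur identity: that substitution leaves $|T_n^{(\J)}|^{1/2}/|z+m_n^{(\J)}|$ on the right-hand side, not $|T_n^{(\J)}|^{1/2}|\RR_{jj}^{(\J)}|$, and there is no a priori inequality of the required direction between $1/|z+m_n^{(\J)}|$ and $|\RR_{jj}^{(\J)}|$ (indeed $|\RR_{jj}^{(\J)}|=1/|z+m_n^{(\J)}-\varepsilon_j^{(\J)}|$ can be either larger or smaller). Instead, run the same two-case argument directly on $|\RR_{jj}^{(\J)}|\le\frac{1}{|z+m_n^{(\J)}|}\bigl(1+|\varepsilon_j^{(\J)}||\RR_{jj}^{(\J)}|\bigr)$: in the first case the prefactor is at most~$2$ and the bound is immediate, while in the second case $1\le 2C\sqrt{|T_n^{(\J)}|}$ gives $|\RR_{jj}^{(\J)}|\le 2C\sqrt{|T_n^{(\J)}|}\,|\RR_{jj}^{(\J)}|$, and the remaining nonnegative terms on the right only help. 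With these two corrections your proof is sound and matches the strategy of the reference.
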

\begin{proof}
See~\cite{GotzeNauTikh2015a}[Lemma~4.2].
\end{proof}

\begin{lemma}\label{lemma step for resolvent}
		Assume that the conditions $\CondThree$ hold. Let $C_0$ and $s_0$ be arbitrary numbers such that $H_0 \geq \max(1/V, 6 c_0), s_0 \geq 2$. There exist a sufficiently large constant $A_0$ and small constant $A_1$ depending on $C_0, s_0, V$ only such that the following statement holds. Fix some $\tilde v: \tilde v_0 s_0 \leq \tilde v \le V$. Suppose that for some integer $L > 0$, all $u, v',q$ such that  $\tilde v \leq v' \leq V,\, |u| \le u_0, 1 \le q \le A_1 (n v')$
		\begin{equation}\label{main condition 1 0}
		\max_{\J: |\J| \le L} \max_{l, k \in \T_\J}\E |\RR_{l k}^{(\J)}(v')|^q \le C_0^q.
		\end{equation}
		Then for all $u,v, q$ such that $\tilde v/s_0 \leq v \le V, |u| \le u_0$, $1 \le q \le A_1 (nv)$
		\begin{equation*}\label{main condition 1 1}
		\max_{\J: |\J| \le L-1} \max_{l, k \in \T_\J}\E |\RR_{l k}^{(\J)}(v)|^q \le C_0^q.
		\end{equation*}
	\end{lemma}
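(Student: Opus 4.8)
The plan is to descend in $v$ by the factor $s_0$ and then feed the resulting crude bound into the self‑improving recursion of Lemma~\ref{lemma: recurence relation for R_jj}. For $v\in[\tilde v,V]$ the asserted estimate is exactly the hypothesis~\eqref{main condition 1 0} (take $v'=v$ and note $L-1\le L$), so only the range $\tilde v/s_0\le v<\tilde v$ needs work. For such $v$ set $v':=\min(s_0v,V)$; since $v\ge\tilde v/s_0$ and $\tilde v\le V$ one has $v'\in[\tilde v,V]$ and $1\le v'/v\le s_0$. Lemma~\ref{appendix lemma resolvent relations on different v} then gives $|\RR^{(\K)}_{jj}(v)|\le s_0|\RR^{(\K)}_{jj}(v')|$ and the analogous inequality for $|z+m_n^{(\K)}(v)|^{-1}$, while for $l\ne k$ the resolvent identity, the Cauchy--Schwarz inequality and Lemma~\ref{appendix lemma resolvent inequalities 1} (exactly as in the proof of Lemma~\ref{lemma: descent}) bound $|\RR^{(\K)}_{lk}(v)|$ in terms of diagonal quantities at $v$ and $v'$. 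Consequently the hypothesis upgrades, for every $|\K|\le L$, to a \emph{crude} bound
\[
\E|\RR^{(\K)}_{lk}(v)|^q\le\widetilde C_0^{\,q},\qquad \E|z+m_n^{(\K)}(v)|^{-q}\le\widetilde C_0^{\,q},\qquad \widetilde C_0:=c\,s_0C_0,
\]
valid for all $1\le q\le A_1(nv)$, after shrinking $A_1$ so that the moment orders $2q$ and $4q$ used below stay $\le A_1(nv)$.

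Next I would estimate the error terms. Fix $\J$ with $|\J|\le L-1$ and $j\in\T_\J$; then every sub‑resolvent entering $\varepsilon_j^{(\J)}(v)$ or $T_n^{(\J)}(v)$ carries an index set of size at most $|\J|+1\le L$, so the crude bound applies to it. Using sub‑Gaussianity (condition $\CondThree$) together with the moment inequalities for linear and quadratic forms from \cite{GotzeNauTikh2015a}[Theorems~A.1--A.2], the deterministic interlacing bound $|\Tr\RR^{(\J)}-\Tr\RR^{(\J,j)}|\le Cv^{-1}$, and the identity $\sum_{k,l}|\RR^{(\J,j)}_{kl}|^2=v^{-1}\imag\Tr\RR^{(\J,j)}$ (which turns the Hilbert--Schmidt norms in those inequalities into $\frac1{nv}\imag m_n^{(\J,j)}$, controlled by the crude bound and so avoiding a union bound over the diagonal), one checks that for $2\le q'\le A_1(nv)$
\[
\E|\varepsilon_j^{(\J)}(v)|^{q'}+\E|T_n^{(\J)}(v)|^{q'}\le\bigl(C(s_0,C_0)\,(\sqrt{A_1}+A_0^{-1/2})\bigr)^{q'},
\]
which can be forced below $\widetilde C_0^{\,-5q'}$ by taking $A_1$ small and $A_0$ large.

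Now plug this into~\eqref{inequality for R_jj}. On the event $\{|T_n^{(\J)}(v)|^{1/2}+|\varepsilon_j^{(\J)}(v)|\le(4c_0)^{-1}\}$ that inequality forces $|\RR^{(\J)}_{jj}(v)|\le 2c_0$; on the complementary event $\mathcal B$ one writes $\E\bigl[|\RR^{(\J)}_{jj}(v)|^q\,\one[\mathcal B]\bigr]\le\E^{1/2}|\RR^{(\J)}_{jj}(v)|^{2q}\,\Pb^{1/2}(\mathcal B)\le\widetilde C_0^{\,q}\,\Pb^{1/2}(\mathcal B)$, and Markov's inequality applied with moment $q$ to the previous display gives $\Pb(\mathcal B)\le\widetilde C_0^{\,-4q}$. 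Hence $\E|\RR^{(\J)}_{jj}(v)|^q\le(2c_0)^q+\widetilde C_0^{\,-q}\le C_0^q$, using $C_0\ge6c_0$; the estimate for $|z+m_n^{(\J)}(v)|^{-1}$ follows in the same way from~\eqref{inequality for 1/(z+m_n(z))}. Finally, for the off‑diagonal entries at level $|\J|\le L-1$ I would use the Schur representation $\RR^{(\J)}_{lk}=-\RR^{(\J)}_{ll}[\W\RR^{(\J,l)}]_{lk}$ for $l\ne k$: the diagonal factor is now under control and the linear form $[\W\RR^{(\J,l)}]_{lk}$ is small by the same concentration estimates, so $\E|\RR^{(\J)}_{lk}(v)|^q\le C_0^q$ as well, closing the induction step.

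The step I expect to be the main obstacle is the treatment of $\mathcal B$: there the only available bound for $|\RR^{(\J)}_{jj}(v)|$ still carries the factor $s_0C_0$, so recovering the \emph{clean} constant $C_0$ requires $\Pb(\mathcal B)$ to decay in $q$ fast enough to absorb $\widetilde C_0^{\,2q}$. This is precisely where sub‑Gaussianity is indispensable --- it makes the $p$‑th moments of $\varepsilon_j^{(\J)}$ and $T_n^{(\J)}$ genuinely small all the way up to $p\sim nv$ --- and it is why $A_0$ must be taken large and $A_1$ small in a manner depending quantitatively on $C_0$ and $s_0$. A secondary, purely bookkeeping, difficulty is keeping the three moment orders $q$, $2q$, $q'$ simultaneously inside the admissible window $[1,A_1nv]$ throughout the argument.
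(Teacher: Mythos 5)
Your argument is correct and uses exactly the same ingredients the paper does (the descent in $v$ via Lemma~\ref{appendix lemma resolvent relations on different v} to get the crude bound $\widetilde C_0=c s_0C_0$, the sub-Gaussian moment bounds for $\varepsilon_j^{(\J)}$ and the Cauchy--Schwarz bound for $T_n^{(\J)}$ that produce a smallness factor $\sim(\sqrt{A_1}+A_0^{-1/2})$, and the recurrence~\eqref{inequality for R_jj}); the one genuine divergence is the closing step. The paper raises~\eqref{inequality for R_jj} to the $q$-th power and applies Cauchy--Schwarz directly,
\[
\E|\RR_{jj}^{(\J)}|^q\le(3c_0)^q\Bigl(1+\E^{1/2}|T_n^{(\J)}|^q\,\E^{1/2}|\RR_{jj}^{(\J)}|^{2q}+\E^{1/2}|\varepsilon_j^{(\J)}|^{2q}\,\E^{1/2}|\RR_{jj}^{(\J)}|^{2q}\Bigr),
\]
and then feeds in the crude bound $\E|\RR_{jj}^{(\J)}|^{2q}\le(s_0C_0)^{2q}$ together with the error-term estimates, absorbing everything into $(6c_0)^q\le C_0^q$. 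You instead use the dichotomy argument: on the good event~\eqref{inequality for R_jj} pointwise forces $|\RR_{jj}^{(\J)}|\le 2c_0$, and on the bad event you invoke Cauchy--Schwarz against the same crude bound and then Markov on $\varepsilon_j^{(\J)},T_n^{(\J)}$. Both routes use the identical engine (Cauchy--Schwarz against the $2q$-th moment of $\RR_{jj}^{(\J)}$ under the crude bound, smallness of the errors for $q\lesssim A_1 nv$); the paper's version is a touch shorter because it avoids the union/Markov bookkeeping, while yours makes it transparent that what is really at work is a self-improvement/stability argument and that the constant $C_0$ ultimately comes from the deterministic fixed-point inequality, which is why the condition $C_0\ge 6c_0$ (note the statement's ``$H_0$'' is evidently a typo for ``$C_0$'') appears. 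Both versions handle the off-diagonal entries by the same Schur-type identity $\RR^{(\J)}_{lk}=-\RR^{(\J)}_{ll}\cdot\frac1{\sqrt n}\sum_{i}X_{li}\RR^{(\J,l)}_{ik}$. One small remark: you do not actually need to ``shrink $A_1$'' to accommodate the moment order $2q$, since choosing $v'=\min(s_0v,V)$ gives a crude bound at level $v$ valid up to order $A_1 s_0(nv)\ge 2A_1(nv)$; the shrinking of $A_1$ is only needed, as in the paper, to make the error terms small against the fixed constants.
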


\begin{proof}
Let us fix an arbitrary $s_0 \geq 2$ and $v \geq \tilde v/s_0$, $\J \subset \T$ such that $|\J| \le L-1$. In the following let $j, k \in \T_\J$. By an obvious inequality we have
\begin{equation*}
\E |\varepsilon_{j}^{(\J)}|^{2q} \le 3^{2q} (\E |\varepsilon_{j1}^{(\J)}|^{2q} + \E |\varepsilon_{j2}^{(\J)}+\varepsilon_{j3}^{(\J)}|^{2q} + \E|\varepsilon_{j4}^{(\J)}|^{2q}). 
\end{equation*}
From $\CondThree$ and Lemmas~\ref{appendix lemma varepsilon_2-3}--\ref{appendix lemma varepsilon_4} we may conclude that
\begin{align*}
\E |\varepsilon_{j}^{(\J)}|^{2q} \le \frac{C^q q^q}{n^q} + \frac{C^q q^q} {(nv)^q} \E \imag^q m_n^{(\J,j)}(z) + \frac{C^q q^{2q}}{(nv)^{2q}}.
\end{align*}
Applying Lemma~\ref{appendix lemma resolvent relations on different v} and~\eqref{main condition 1 0} we obtain the following estimate
\begin{equation*}\label{eq: bounds for RR jj 1}
\E[\imag m_n^{(\J,j)}(z)]^{q} \le s_0^{q} C_0^{q}.
\end{equation*}
In view of these inequalities we may write
\begin{align}\label{eq: eps_j est}
\E |\varepsilon_{j}^{(\J)}|^{2q} \le \frac{(C C_0 s_0)^q q^q } {(nv)^q} + \frac{C^q q^{2q}}{(nv)^{2q}}.
\end{align}
Similarly, we can estimate
\begin{align}\label{eq: T_n est}
\E |T_n^{(\J)}|^{q} &\le  \left(\frac{1}{n} \sum_{j \in \T_{\J}} \E |\varepsilon_j^{(\J)}|^{2q} \right)^{1/2} \left(\frac{1}{n} \sum_{j \in \T_{\J}} \E |\RR_{jj}^{(\J)}(v)|^{2q} \right)^{1/2} \nonumber \\
&\le  \frac{(C C_0^\frac32 s_0^\frac32)^q q^\frac{q}{2}} {(nv)^\frac q2} + \frac{(C C_0 s_0)^q q^{q}}{(nv)^{q}}.
\end{align}
Applying H\"older's inequality and Lemma~\ref{lemma: recurence relation for R_jj} we write 
\begin{align*}
\E|\RR_{jj}^{(\J)}(v)|^{q} \le (3 c)^{q}(1 + \E^{\frac{1}{2}}|T_n^{(\J)}|^{q}\E^{\frac{1}{2}}|\RR_{jj}^{(\J)}(v)|^{2q} + \E^{\frac{1}{2}}|\varepsilon_j^{(\J)}|^{2q} \E^{\frac{1}{2}}|\RR_{jj}^{(\J)}(v)|^{2q}).
\end{align*}
Lemma~\ref{appendix lemma resolvent relations on different v} and assumption~\eqref{main condition 1 0} imply
$\E|\RR_{jj}^{(\J)}(v)|^{2q} \le C_0^{2q} s_0^{2q}$. Hence, we may use~\eqref{eq: eps_j est} and~\eqref{eq: T_n est} to show that
\begin{align}\label{eq: R_j est}
\E|\RR_{jj}^{(\J)}(v)|^{q} \le (3 c)^{q} \left(1 +  \frac{(C C_0 s_0)^{2q} q^\frac{q}{4}} {(nv)^\frac q4} + \frac{(C C_0 s_0)^\frac{3q}{2} q^\frac q2}{(nv)^\frac q2}  + \frac{(C C_0 s_0)^q q^{q}}{(nv)^{q}} \right).
\end{align}
The off-diagonal entries $\RR_{jk}^{(\J)}$ may be expressed as follows
$$
\RR_{jk}^{(\J)} = -\frac{1}{\sqrt n} \sum_{l \in \T_{\J,j}} X_{jl} \RR_{lk}^{(\J,j)} \RR_{jj}^{(\J)}.
$$
Applying  H{\"o}lder's inequality we obtain
$$
\E |\RR_{jk}^{(\J)}|^{q} \le n^{-\frac q2} \E^{\frac{1}{2}} \left| \sum_{l \in \T_{\J,j}} X_{jl} \RR_{lk}^{(\J,j)} \right|^{2q}  \E^{\frac{1}{2}} |\RR_{jj}^{(\J)}|^{2q}.
$$
Khinchine's inequality, Lemmas~\ref{appendix lemma resolvent inequalities 1}--\ref{appendix lemma resolvent relations on different v} and assumption~\eqref{main condition 1 0} together imply 
\begin{equation}\label{eq: R_{jk}}
\E |\RR_{jk}^{(\J)}|^{q} \le \frac{(C C_0^\frac32 s_0^\frac32)^q q^\frac q2}{(nv)^\frac q2}.
\end{equation}
We may choose now  the constants $A_0$ sufficiently large, respectively  $A_1$ sufficiently small such that~\eqref{eq: R_j est}--\eqref{eq: R_{jk}} result in
$$
\max_{j,k \in \T_\J}\E|\RR_{jk}^{(\J)}(v)|^{q} \le  C_0^q.
$$
for $1 \le q \le A_1 (n \tilde v/s_0)$, $v \geq \tilde v/s_0$.
\end{proof}

\begin{proof}[Proof of Lemma~\ref{main lemma gauss}]
We first prove~\eqref{eq: main lemma first statement gauss}.  Let us choose some sufficiently large constant $C_0 > \max(1/V, 6 c_0)$ and fix $s_0: = 2$. Here $c_0$ is defined in Lemma~\ref{lemma: recurence relation for R_jj}. We also choose $A_0$ and $A_1$ as in Lemma~\ref{lemma step for resolvent}. Let $L:= [\log_{s_0} V/\tilde v_0]+1$. Since $\|\RR^{(\J)}(V)\| \le V^{-1}$ we may write 
$$
\max_{\J: |\J| \le  L} \max_{l,k \in \T_\J} \E|\RR_{lk}^{(\J)}(V)|^p \le C_0^p
$$
for all $u, p$ such that $|u| < 2$ and $1 \le p \le A_1 (nV)$. Fix arbitrary $v: V/s_0 \le  v \leq V$ and $p: 1 \le p \le A_1 (nv)$. Lemma~\ref{lemma step for resolvent} yields that
$$
\max_{\J: |\J| \le L-1} \max_{l, k \in \T_\J} \E|\RR_{lk}^{(\J)}(v)|^p \le C_0^p
$$
for $1 \le p \le A_1 (n V/s_0)$, $v \geq V/s_0$. We may repeat this procedure $L$ times and finally obtain
$$
\max_{l,k \in \T}\E|\RR_{lk}(v)|^p \le C_0^p
$$
for $1 \le p \le A_1 (n V /s_0^{L}) \le A_1 (n \tilde v_0)$ and $v \geq V/s_0^{L} = \tilde v_0$. Thus we proved~\eqref{eq: main lemma first statement gauss}. Similarly one may prove~\eqref{inequality for 1/(z+m_n(z))}.
\end{proof}

The following lemma is the analogue of Lemma~\ref{lemma: recurence relation for R_jj} and provides a recurrence relation for $\imag \RR_{jj}$.
\begin{lemma}\label{lemma recurence relation for imag R_jj}
For any set $\J$ and $j \in \T_\J$ there exists a positive constant $C_1$ depending on $u_0, V$ such that for all $z = u + iv$ with $V \geq v > 0$ and $|u| \le u_0$ we have
\begin{align*}
\imag \RR_{jj}^{(\J)}(z) &\le C_1 \left[\imag s(z) (1 + (|\varepsilon_j^{(\J)}| + |T_n^{(\J)}|^{\frac{1}{2}})|\RR_{jj}^{(\J)}(z)|)   +  |\imag \varepsilon_j^{(\J)} + \imag \Lambda_n^{(\J)}| |\RR_{jj}^{(\J)}(z)|\right.\\
&\left.\qquad\qquad + (|\varepsilon_j^{(\J)}| + |T_n^{(\J)}|^{\frac{1}{2}})\imag \RR_{jj}^{(\J)}(z) \right ].
\end{align*}
\end{lemma}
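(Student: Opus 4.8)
The plan is to obtain an exact representation for $\RR_{jj}^{(\J)}(z)$ and then to take imaginary parts. Expanding the quadratic form in the denominator of~\eqref{eq: representation for RR_jj} in terms of $\varepsilon_{1j}^{(\J)},\dots,\varepsilon_{4j}^{(\J)}$ and $m_n^{(\J)}(z)$, exactly as in the passage from~\eqref{eq: R_jj representation 0} to~\eqref{eq: R_jj representation}, gives $\RR_{jj}^{(\J)}(z)\bigl(z+m_n^{(\J)}(z)-\varepsilon_j^{(\J)}\bigr)=-1$. Since $s(z)$ solves~\eqref{eq: s(z) equation}, that is, $z+s(z)=-1/s(z)$, I would substitute $z+m_n^{(\J)}(z)=-1/s(z)+\Lambda_n^{(\J)}(z)$ and solve for $\RR_{jj}^{(\J)}$, obtaining the identity
\begin{equation*}
\RR_{jj}^{(\J)}(z)=s(z)\Bigl(1+\bigl(\Lambda_n^{(\J)}(z)-\varepsilon_j^{(\J)}\bigr)\RR_{jj}^{(\J)}(z)\Bigr),
\end{equation*}
which is the counterpart of~\eqref{eq: R_jj representation} written with $s$ and $\Lambda_n^{(\J)}$ in place of $m_n$.

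Next, write $\RR_{jj}^{(\J)}=s\,g$ with $g:=1+\zeta\,\RR_{jj}^{(\J)}$ and $\zeta:=\Lambda_n^{(\J)}-\varepsilon_j^{(\J)}$. From $\imag(ab)=\re a\,\imag b+\imag a\,\re b$, the positivity $\imag\RR_{jj}^{(\J)}\ge 0$, and the elementary bounds $|\re a|,|\imag a|\le|a|$, one obtains $|\imag g|\le|\imag\zeta|\,|\RR_{jj}^{(\J)}|+|\zeta|\,\imag\RR_{jj}^{(\J)}$ and $|g|\le 1+|\zeta|\,|\RR_{jj}^{(\J)}|$, hence
\begin{equation*}
\imag\RR_{jj}^{(\J)}=\imag(s\,g)\le|s(z)|\,|\imag g|+\imag s(z)\,|g|.
\end{equation*}
The one extra input is $|s(z)|\le 1$ on $\C^{+}$, which is immediate from $z=-s(z)-1/s(z)$: if $|s(z)|=r>1$, writing $s(z)=re^{i\phi}$ with $0<\phi<\pi$ gives $\imag z=(r^{-1}-r)\sin\phi<0$, contradicting $v>0$. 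Inserting the two bounds and unpacking $\zeta$ yields
\begin{equation*}
\imag\RR_{jj}^{(\J)}(z)\le\imag s(z)\bigl(1+|\Lambda_n^{(\J)}-\varepsilon_j^{(\J)}|\,|\RR_{jj}^{(\J)}|\bigr)+|\imag\Lambda_n^{(\J)}-\imag\varepsilon_j^{(\J)}|\,|\RR_{jj}^{(\J)}|+|\Lambda_n^{(\J)}-\varepsilon_j^{(\J)}|\,\imag\RR_{jj}^{(\J)},
\end{equation*}
which is already the shape of the asserted inequality.

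It then remains to replace the full moduli $|\Lambda_n^{(\J)}|$ and $|\imag\Lambda_n^{(\J)}|$ by constant multiples of $|T_n^{(\J)}|^{1/2}$, to bound $|\Lambda_n^{(\J)}-\varepsilon_j^{(\J)}|\le|\varepsilon_j^{(\J)}|+|\Lambda_n^{(\J)}|$, and to absorb $\imag\Lambda_n^{(\J)}-\imag\varepsilon_j^{(\J)}$ into $|\imag\varepsilon_j^{(\J)}+\imag\Lambda_n^{(\J)}|$ by the triangle inequality; after this the lemma follows with some $C_1=C_1(u_0,V)$. The estimates $|\Lambda_n^{(\J)}|\le C|T_n^{(\J)}|^{1/2}$ and $|\imag\Lambda_n^{(\J)}|\le C|T_n^{(\J)}|^{1/2}$ are the $\J$-decorated forms of~\eqref{eq: abs value lambda main result section}--\eqref{eq: abs imag lambda main result section} (Lemma~B.1 in~\cite{GotzeNauTikh2015a}) applied to $\W^{(\J)}$, the $O(|\J|/n)$ corrections from passing to this sub-matrix being harmless for the sets $\J$ occurring later.

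I expect the only genuinely delicate point to be this last substitution when $\re z$ lies outside $[-2,2]$: \eqref{eq: abs value lambda main result section} is stated for $|u|\le 2+v$, whereas for $2+v<|u|\le u_0$ one controls $|\imag\Lambda_n^{(\J)}|$ directly but must argue separately for $|\Lambda_n^{(\J)}|$ itself. There I would use the quadratic identity $\Lambda_n^{(\J)}\bigl(b(z)+\Lambda_n^{(\J)}\bigr)=T_n^{(\J)}$ together with $|b(z)|\ge c\sqrt{\gamma+v}$: if $|T_n^{(\J)}|\le\frac14|b(z)|^2$ the stable branch gives $|\Lambda_n^{(\J)}|\le 2|T_n^{(\J)}|/|b(z)|\le|T_n^{(\J)}|^{1/2}$, while if $|T_n^{(\J)}|$ is large (in particular $\ge 1$) the claimed inequality holds for free, since its right-hand side then already dominates $\imag\RR_{jj}^{(\J)}$ through the term $(|\varepsilon_j^{(\J)}|+|T_n^{(\J)}|^{1/2})\,\imag\RR_{jj}^{(\J)}$; the intermediate range is closed using the explicit size of $\imag s(z)$ outside the spectrum and the trivial bound $\imag\RR_{jj}^{(\J)}\le|\RR_{jj}^{(\J)}|$.
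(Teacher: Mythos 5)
Your core computation is correct and is indeed the natural way to attack this lemma. From $\RR_{jj}^{(\J)}\bigl(z+m_n^{(\J)}-\varepsilon_j^{(\J)}\bigr)=-1$ and $z+s=-1/s$ one gets exactly the identity $\RR_{jj}^{(\J)}=s\bigl(1+(\Lambda_n^{(\J)}-\varepsilon_j^{(\J)})\RR_{jj}^{(\J)}\bigr)$, and your manipulation with $\imag(ab)=\re a\,\imag b+\imag a\,\re b$, $\imag\RR_{jj}^{(\J)}\ge 0$ and $|s|\le 1$ is clean and correct. Two things deserve scrutiny, though.

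First, a cosmetic but worth-flagging point: the algebra produces the middle term with a \emph{minus} sign, $|\imag\Lambda_n^{(\J)}-\imag\varepsilon_j^{(\J)}|$, whereas the statement has $|\imag\varepsilon_j^{(\J)}+\imag\Lambda_n^{(\J)}|$. These are \emph{not} comparable in general, and the triangle inequality does not turn $|a-b|$ into a multiple of $|a+b|$, so the sentence ``absorb $\imag\Lambda_n^{(\J)}-\imag\varepsilon_j^{(\J)}$ into $|\imag\varepsilon_j^{(\J)}+\imag\Lambda_n^{(\J)}|$ by the triangle inequality'' is not a valid step. In the one place the lemma is used (Lemma~\ref{lem: imag RR jj recur}), the two contributions are immediately separated via $|a\pm b|\le |a|+|b|$ and bounded by~\eqref{eq: imag R step 4} and~\eqref{eq: imag R step 5}, so the sign is immaterial downstream; but your proof, as written, establishes a statement with a different middle term, and you should say so rather than claim the triangle inequality closes it.

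Second, and more seriously, the replacement $|\Lambda_n^{(\J)}|\le C\sqrt{|T_n^{(\J)}|}$ for $2+v<|u|\le u_0$ is a genuine gap. The quadratic identity $\Lambda_n^{(\J)}(\Lambda_n^{(\J)}+b)=T_n^{(\J)}+O(|\J|/n)$ together with $|b(z)|\asymp\sqrt{\gamma+v}$ does not force $\Lambda_n^{(\J)}$ onto the small (``stable'') root: the phrase ``the stable branch gives'' presupposes what must be proved. The only a priori constraint is $\imag\bigl(\Lambda_n^{(\J)}+b\bigr)=\imag m_n^{(\J)}+v+\imag s\ge v+\imag s>0$, which rules out exact cancellation but not the wrong-branch regime $v+\imag s\le|\Lambda_n^{(\J)}+b|<|b|/2$. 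In that regime $|\Lambda_n^{(\J)}|\asymp|b|\asymp\sqrt{\gamma+v}$ while $\sqrt{|T_n^{(\J)}|}$ can be as small as $\sqrt{|b|\,(v+\imag s)}\asymp(\gamma+v)^{1/4}\sqrt{v}$, so for fixed $\gamma>0$ and $v\downarrow 0$ the pointwise inequality $|\Lambda_n^{(\J)}|\le C\sqrt{|T_n^{(\J)}|}$ is simply false. The asserted lemma does still hold there, but for a different reason: in the wrong-branch case $|z+m_n^{(\J)}|=|(\Lambda_n^{(\J)}+b)-s|$ is bounded below (so $|\RR_{jj}^{(\J)}|$ is bounded), and the right-hand side is then carried by the $|\imag\Lambda_n^{(\J)}|\,|\RR_{jj}^{(\J)}|$ and $\imag s$ terms rather than by $\sqrt{|T_n^{(\J)}|}$; you never make that argument. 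Likewise ``the intermediate range is closed using the explicit size of $\imag s(z)$ outside the spectrum and the trivial bound $\imag\RR_{jj}^{(\J)}\le|\RR_{jj}^{(\J)}|$'' is a placeholder, not a proof. To make the write-up watertight you need a genuine case split on $|\Lambda_n^{(\J)}+b|\gtrless |b|/2$ (with the $O(|\J|/n)$ correction carried along), treating the wrong-branch subcase by bounding $|\RR_{jj}^{(\J)}|$ directly rather than by controlling $|\Lambda_n^{(\J)}|$ through $|T_n^{(\J)}|$.
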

\begin{proof}
The proof is similar to the proof of Lemma~\ref{lemma: recurence relation for R_jj} is omitted.
\end{proof}

\begin{lemma}\label{lem: imag RR jj recur}
Assume that the conditions $\CondThree$ hold. Let $H_0$ be sufficiently large positive constant and $s_0$ be an arbitrary number such that $s_0 \geq 2$. There exist  sufficiently large  $A_0$ and small $A_1$ depending on $H_0, s_0, V$ only such that the following holds.  Fix some $\tilde v: \tilde v_0 s_0 \leq \tilde v \le V$. Suppose that for some integer $L > 0$, all $u, v',q$ such that  $\tilde v \leq v' \leq V,\, |u| \le u_0, 1 \le q \le A_1 (n v')$
\begin{equation}\label{main condition imag part of R_jj}
\max_{\J: |\J| \le L} \max_{l \in \T_\J} \E \imag^q \RR_{ll}^{(\J)}(v') \le H_0^q \Psi^q(v')
\end{equation}
Then for all $u,v, q$ such that $\tilde v/s_0 \leq v \le V, |u| \le u_0$, $1 \le q \le A_1 (nv)$
$$
\max_{\J: |\J| \le L-1} \max_{l \in \T_\J} \E \imag^q \RR_{ll}^{(\J)}(v) \le H_0^q \Psi^q(v).
$$
\end{lemma}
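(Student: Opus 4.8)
The plan is to follow the proof of Lemma~\ref{lemma step for resolvent} almost verbatim, replacing the recurrence~\eqref{inequality for R_jj} by its imaginary-part counterpart from Lemma~\ref{lemma recurence relation for imag R_jj}. Fix $s_0\ge 2$, a height $v$ with $\tilde v/s_0\le v\le V$, a set $\J$ with $|\J|\le L-1$, an index $j\in\T_\J$ and an exponent $q$ with $1\le q\le A_1(nv)$, and put $v':=s_0v\ge\tilde v$, so that $2q\le 2A_1(nv)\le A_1(nv')$. Before estimating I would record the bounds that will be used throughout. From Lemma~\ref{main lemma gauss} (more precisely its $\J$-version, which is established along the way in its proof) together with Lemma~\ref{appendix lemma resolvent relations on different v} one has $\E|\RR_{lk}^{(\J')}(v)|^{2q}\le(C_0s_0)^{2q}$ for all $|\J'|\le L$. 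From the induction hypothesis~\eqref{main condition imag part of R_jj} at height $v'$ together with Lemma~\ref{appendix lemma imag resolvent relations on different v} one has $\E\imag^{2q}\RR_{jj}^{(\J)}(v)\le s_0^{4q}H_0^{2q}\Psi^{2q}(v)$ and, averaging over the diagonal, $\E\imag^{q}m_n^{(\J,j)}(v)\le s_0^{2q}H_0^{q}\Psi^{q}(v)$; here I use the elementary fact $\imag s(u+is_0v)\le s_0\,\imag s(u+iv)$, immediate from the Poisson integral representation of $\imag s$, which yields $\Psi(v')\le s_0\Psi(v)$, together with the pointwise bound $|\RR_{jj}^{(\J)}(v)|^2\le v^{-1}\imag\RR_{jj}^{(\J)}(v)$ from Lemma~\ref{appendix lemma resolvent inequalities 1}. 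Finally, $\frac{q}{nv}\le\Psi(v)$ always, since $\Psi(v)=\imag s+\frac{q}{nv}$.

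I would then raise the recurrence of Lemma~\ref{lemma recurence relation for imag R_jj} to the power $q$, take expectations, and use H\"older's inequality to split $\E\imag^q\RR_{jj}^{(\J)}(v)$ into three groups. The group coming from $\imag s\,(1+(|\varepsilon_j^{(\J)}|+|T_n^{(\J)}|^{1/2})|\RR_{jj}^{(\J)}|)$ is handled exactly as in~\eqref{eq: eps_j est}--\eqref{eq: T_n est}: the crude estimates from Lemmas~\ref{appendix lemma varepsilon_2-3}--\ref{appendix lemma varepsilon_4} (using only $\imag m_n^{(\J,j)}\le|\RR_{jj}^{(\J,j)}|$) give $\E|\varepsilon_j^{(\J)}|^{2q},\ \E|T_n^{(\J)}|^q\le\eta^{2q}$ with $\eta$ as small as we wish once $A_1$ is small, whence this group is $\le C^q(\imag s)^q\le C^q\Psi^q(v)$. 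The group coming from $(|\varepsilon_j^{(\J)}|+|T_n^{(\J)}|^{1/2})\imag\RR_{jj}^{(\J)}$ is the self-improving term: H\"older together with $\E\imag^{2q}\RR_{jj}^{(\J)}(v)\le s_0^{4q}H_0^{2q}\Psi^{2q}(v)$ and the smallness above bound it by $(\eta s_0^2)^qH_0^q\Psi^q(v)\le\frac{1}{4}H_0^q\Psi^q(v)$ for $A_1$ small. The group coming from $|\imag\varepsilon_j^{(\J)}+\imag\Lambda_n^{(\J)}|\,|\RR_{jj}^{(\J)}|$ is the delicate one: here one must use the \emph{sharp} estimates of Lemmas~\ref{appendix lemma varepsilon_2-3}--\ref{appendix lemma varepsilon_4} for $\imag\varepsilon_j^{(\J)}$ (note that $\varepsilon_{1j}^{(\J)}$ is real), which retain a factor $\imag m_n^{(\J,j)}$ that via the bound above becomes a factor $\Psi(v)$, together with $|\imag\Lambda_n^{(\J)}|\le\imag m_n^{(\J)}+\imag s$ (and, where a finer bound is needed, $|\imag\Lambda_n^{(\J)}|\le C|T_n^{(\J)}|/|b(z)|$ from~\eqref{eq: abs imag lambda main result section} combined with Theorem~\ref{th: general bound}), and the conversion $|\RR_{jj}^{(\J)}|^2\le v^{-1}\imag\RR_{jj}^{(\J)}$ wherever the second power of $\Psi$ still has to be produced; using $\frac{q}{nv}\le\Psi(v)$ to absorb every spare power of $\frac{q}{nv}$, one arrives at a bound of the form $(C\sqrt{H_0})^q\Psi^q(v)$ with $C$ depending only on $u_0,V,s_0$, hence $\le\frac{1}{4}H_0^q\Psi^q(v)$ once $H_0$ is large. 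Summing the three groups and fixing $H_0$ large and then $A_1$ small (with $A_0$ correspondingly large) yields $\E\imag^q\RR_{jj}^{(\J)}(v)\le H_0^q\Psi^q(v)$, which is the assertion for $|\J|\le L-1$.

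The main obstacle will be the bookkeeping inside the third group: when the quadratic and linear forms making up $\varepsilon_j^{(\J)}$ and $T_n^{(\J)}$ are expanded term by term (via~\eqref{resolvent expansion R}, exactly as in the proof of Lemma~\ref{lem: bound in the optimal region}), one must check that every resulting term carries precisely the total power $\Psi^q(v)$, so the bound has the advertised shape $H_0^q\Psi^q$, and that its numerical coefficient separates cleanly into ``arbitrarily small times the quantity being estimated'' (absorbed by shrinking $A_1$) and ``a fixed constant times $H_0^q\Psi^q(v)$'' (absorbed by enlarging $H_0$), with no surviving factor of $v^{-1}$ or of $\frac{nv}{q}$. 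This is exactly the calculation underlying the analogous step in the proof of Theorem~\ref{th:main}, and it is what dictates the admissible ranges of $A_0$ and $A_1$. Everything else is routine: no off-diagonal estimate is required here, and the simultaneous descent in $|\J|$ from $L$ to $L-1$ and in $v$ from $\tilde v$ to $\tilde v/s_0$ is carried out uniformly by the argument above, exactly as in Lemma~\ref{lemma step for resolvent}.
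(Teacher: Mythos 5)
Your proposal is correct and follows essentially the same route as the paper: apply Lemma~\ref{lemma recurence relation for imag R_jj}, H\"older, crude estimates for $\varepsilon_j^{(\J)}$ and $T_n^{(\J)}$ in the first and self-improving groups (absorbed by shrinking $A_1$), and the sharp estimates of Lemmas~\ref{appendix lemma imag varepsilon_2-3}--\ref{appendix lemma varepsilon_4} together with Theorem~\ref{th: general bound} (with $|\J|\le L$ allowed in $\mathcal A$) for the group $|\imag\varepsilon_j^{(\J)}+\imag\Lambda_n^{(\J)}|\,|\RR_{jj}^{(\J)}|$, after which the term is $(C\sqrt{H_0})^q\Psi^q$ and is absorbed by taking $H_0$ large. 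One clarifying point: the parenthetical alternative $|\imag\Lambda_n^{(\J)}|\le\imag m_n^{(\J)}+\imag s$ is not actually usable in this group, since it reproduces a factor $H_0^q\Psi^q$ (not $H_0^{q/2}\Psi^q$) that cannot be absorbed by enlarging $H_0$; one must go through Theorem~\ref{th: general bound} for $\E|\imag\Lambda_n^{(\J)}|^{2q}$ exactly as the paper does, which is the "finer bound" you mention.
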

\begin{proof}
From Lemma~\ref{lemma recurence relation for imag R_jj} it follows that
\begin{align*}
\E\imag^q \RR_{jj}^{(\J)} &\le (C C_0)^q \imag^q s(z) \E^\frac12 (1 + (|\varepsilon_j^{(\J)}| + |T_n^{(\J)}|^{\frac{1}{2}})^{2q}  \nonumber \\
&\qquad + (C C_0)^q \E^{\frac{1}{2}}|\imag \varepsilon_j^{(\J)} + \imag \Lambda_n^{(\J)}|^{2q} \nonumber \\
&\qquad + C^q \E^{\frac{1}{2}}(|\varepsilon_j^{(\J)}| + |T_n^{(\J)}|^{\frac{1}{2}})^{2q} \E^{\frac{1}{2}} \imag^{2q} \RR_{jj}^{(\J)}.
\end{align*}
To estimate $\E |\varepsilon_j^{(\J)}|^{2q}$ and $\E|T_n^{(\J)}|^{q}$ we may proceed as in Lemma~\ref{lemma step for resolvent}. We obtain the following inequalities
\begin{align}\label{eq: imag R step 2}
\E |\varepsilon_j^{(\J)}|^{2q} &\le 3^{2q} \left[ \frac{C^q q^q}{n^q}  +  \frac{(C)^q q^q} {(nv)^q} + \frac{C^q q^{2q}}{(nv)^{2q}} + \frac{1}{(nv)^{2q}}\right] \le \frac{C^q q^q}{(nv)^q}
\end{align}
and
\begin{align}\label{eq: imag R step 3}
\E|T_n^{(\J)}|^{q} \le C_0^q \left(\frac{1}{n} \sum_{j \in \T_{\J}} \E |\varepsilon_j^{(\J)}|^{2q} \right)^{1/2} \le \frac{C^q q^\frac{q}{2}}{(nv)^\frac{q}{2}}
\end{align}
Choosing $v':=s_0 v \geq v_1$ we may show that $2q \le A_1 n v'$. Applying Lemma~\ref{appendix lemma imag resolvent relations on different v} and using the assumption~\eqref{main condition imag part of R_jj} we get
\begin{equation*}
\E \imag^{2q} \RR_{jj}^{(\J)}( v) \le s_0^{2q} \E \imag^{2q} \RR_{jj}^{(\J)}(s_0 v) \le s_0^{2q} H_0^{2q} \Psi^{2q}(s_0 v).
\end{equation*}
Since we need an estimate involving $\Psi^{2q}(v)$ instead of $\Psi^{2q}(s_0 v)$ on the r.h.s. of the previous inequality we need to perform a descent along the imaginary line from $s_0 v$ to $v$. Hence  we again need to apply Lemma~\ref{appendix lemma imag resolvent relations on different v}.  Choosing suitable constants $A_0$ and $A_1$ in~\eqref{eq: imag R step 2} and~\eqref{eq: imag R step 3} one may show that
\begin{align}\label{eq: imag R step 1 0}
\E\imag^q \RR_{jj}^{(\J)} &\le (C C_0)^q \E^{\frac{1}{2}}|\imag \varepsilon_j^{(\J)} + \imag \Lambda_n^{(\J)}|^{2q} + \frac{H_0^q}{2} \Psi^q.
\end{align}
Applying Lemmas~\ref{appendix lemma imag varepsilon_2-3} and~\ref{appendix lemma varepsilon_4} we obtain
$$
\E |\imag \varepsilon_j^{(\J)}|^{2q} \le \frac{C^q q^q}{(nv)^q} \E \imag^q m_n^{(\J,j)}(z) + \frac{C^q q^{2q}}{(nv)^{2q}}.
$$
which may be rewritten as follows
\begin{align}\label{eq: imag R step 4}
\E |\imag \varepsilon_j^{(\J)}|^{2q} &\le \frac{(C s_0)^{2q} q^q H^q}{(nv)^q} \Psi^q(z) + \frac{C^q q^{2q}}{(nv)^{2q}}.
\end{align}
To estimate $\E |\imag \Lambda_n^{(\J)}|^{q}$ we may proceed as in the proof of Theorem~\ref{th:main}. We will apply Theorem~\ref{th: general bound} (one has to replace in the definition of~\eqref{definition of A} the maximum over $|\J| \le 1$ by the maximum over $|\J| \le L$) and assumption~\eqref{main condition imag part of R_jj}. Hence,
\begin{align}\label{eq: imag R step 5}
\E |\imag \Lambda_n^{(\J)}|^{2q} \le \frac{(C s_0)^{2q} q^q H^q}{(nv)^q} \Psi^q(z) + \frac{C^q q^{2q}}{(nv)^{2q}}.
\end{align}
Combining the estimates~\eqref{eq: imag R step 4} and \eqref{eq: imag R step 5} we may choose constants $A_0$ and $A_1$ (correcting the previous choice if needed) such that
$$
(C C_0)^q \E^{\frac{1}{2}}|\imag \varepsilon_j^{(\J)}| \le \frac{H_0^q}{2}\Psi^q(z).
$$
The last two inequalities and~\eqref{eq: imag R step 1 0} together imply the desired bound
$$
\E \imag^q \RR_{jj}^{(\J)} \le H_0^q \Psi^q.
$$
\end{proof}
\begin{proof}[Proof of Lemma~\ref{lemma: imag part of R_jj gauss}]
Let us take any $u_0>0$ and any $\hat v \ge 2+u_0, |u|\le u_0$. Furthermore we fix an arbitrary $\J\subset \T$. We claim that
\begin{equation}\label{bound of imag RR via imag s}
\imag s(u+i\hat v)\ge \frac12\imag \RR_{jj}^{(\J)}(u+i\hat v).
\end{equation}
Indeed, note first that for all $u$ (and $|u|\le u_0$ as well)
\begin{equation}\label{bound for imag RR_{jj} for big V}
\imag \RR_{jj}^{(\J)}(u+i \hat v)\le \frac{1}{\hat v}.
\end{equation}
For all $|u|\le u_0$ and $|x|\le 2$ we obtain
\begin{equation*}
\frac{\hat v}{(x-u)^2+\hat v^2}\ge\frac{\hat v}{(2+u_0)^2+\hat v^2} \geq \frac{1}{2\hat v}.
\end{equation*}
It follows from the last inequality that
\begin{equation}\label{bound for imag s for big V}
\imag s(u+i \hat v)=\frac1{2\pi}\int_{-2}^2\frac{\hat v}{(u-x)^2+\hat v^2}\sqrt{4-x^2}dx\ge \frac{1}{2\hat v}.
\end{equation}
Comparing~\eqref{bound for imag RR_{jj} for big V} and~\eqref{bound for imag s for big V} we arrive at~\eqref{bound of imag RR via imag s}.

We now take $v \geq \max(\hat v, V)$. Let $H_0$ be some large constant, $H_0 \geq \max(C', C'')$. We choose $s_0, A_0$ and $A_1$  as in the previous Lemma~\ref{lem: imag RR jj recur} obtaining
$$
\max_{\J: |\J| \le  L} \max_{j \in \T_\J} \imag^q \RR_{jj}^{(\J)}(z) \le H_0^q  \Psi^q(z)
$$
with $L = [-\log_{s_0} \tilde v_0] + 1$. We may now proceed recursively  in $L$ steps
and arrive at
$$
\max_{j \in \T} \imag^q \RR_{jj}(z) \le H_0^q \Psi^q(z)
$$
for $v \geq \tilde v_0$ and  $1 \le q \le A_1 nv$ .
\end{proof}

\section{Delocalization of eigenvectors}\label{sec: delocalization}
In this section we prove Theorem~\ref{th: delocalization}. The ideas of the proof are similar to~\cite{GotzeNauTikh2015b}[Theorem~1.4], but for completeness we provide the details below. Note that the proof is essentially based on Lemma~\ref{main lemma}.
\begin{proof}[Proof of Theorem~\ref{th: delocalization}] Let us introduce the following distribution function
	$$
	F_{nj}(x): = \sum_{k=1}^n |u_{jk}|^2 \one[\lambda_k(\W)\le x].
	$$
	Using the eigenvalue decomposition of $\W$ it is easy to see that
	$$
	\RR_{jj}(z) = \sum_{k=1}^n\frac{|u_{jk}|^2}{\lambda_k(\W) - z} =  \int_{-\infty}^\infty \frac{1}{x - z} \, d F_{nj}(x),
	$$
	which means that $\RR_{jj}(z)$ is the Stieltjes transform of $F_{nj}(x)$. For any $\lambda > 0$ we have
	\begin{equation}\label{eq: Q nj}
	\max_{1 \le k \le n} |u_{jk}|^2 \le \sup_x (F_{nj}(x + \lambda) - F_{nj}(x)) \le 2 \sup_u \lambda \imag \RR_{jj}(u + i\lambda).
	\end{equation}
	To finish the proof we need to show that with high probability the r.h.s. of~\eqref{eq: Q nj} is bounded by $n^{-1} \log n$.
	Let us recall the  following notations.  We chose an arbitrary $0 < \phi' < \frac{1}{4}$. Let $\hat X_{jk}: = X_{jk} \one[|X_{jk}| \leq D n^{\frac12-\phi'}]$, $\tilde X_{jk}: = X_{jk} \one[|X_{jk}| \leq D n^{\frac12-\phi'}] - \E X_{jk} \one[|X_{jk}| \leq D n^{\frac12-\phi'}]$ and finally
	$\breve X_{jk}: = \tilde X_{jk} \sigma^{-1}$, where $\sigma^2: = \E |\tilde X_{11}|^2$.
	Let $\widehat \X, \widetilde \X$ and $\breve \X$ denote symmetric random matrices  with entries $\hat X_{jk}, \tilde X_{jk}$ and $\breve X_{jk}$ respectively. Similarly we denote the resolvent matrices by $\widehat \RR, \widetilde \RR$ and $\breve \RR$.
	In this case we have
	$$
	\Pb(\W \neq \widehat \W) \le \frac{C}{n^{2-\phi}},
	$$
	where $\phi: = 8 \phi'$. 
	Let $u_0 > 0$ denote  a large constant, whose exact value will be chosen later. Applying~\cite{GotzeNauTikh2015b}[Lemmas~A.1,~A.2] it follows  that
	\begin{equation*}
	\Pb(\|\W \| \geq u_0) \le \frac{C}{n^{2-\phi}}.
	\end{equation*}
	In what follows we may assume that $\|\W\| \le u_0$ and $\W = \widehat \W$. Then for $|u| \geq 2u_0$ and $v > 0$ we get
	\begin{equation*}
	|\RR_{jj}(u + i v)| \le \int_{-u_0}^{u_0} \frac{1}{\sqrt{(x-u)^2 + v^2}} \, dF_{nj}(x) \le \frac{1}{u_0} \le C,
	\end{equation*}
	where $C$ is some large positive constant which will be chosen later.
	It remains to estimate $|\RR_{jj}(u + i v)|$ for all $-2u_0 \le u \le 2u_0$. Denote this interval by  $\mathcal U_0: = [-2u_0, 2u_0]$.
	By the triangular inequality we may write $|\RR_{jj}| = |\widehat \RR_{jj}| \le |\widetilde \RR_{jj}| + |\widehat \RR_{jj} - \widetilde \RR_{jj}|$. Using  the simple identity
	$$
	\widehat \RR_{jj} - \widetilde \RR_{jj} = [\widehat \RR (\widehat \W - \widetilde \W) \widetilde \RR]_{jj}
	$$
	we get
	$$
	|\widehat \RR_{jj} - \widetilde \RR_{jj}| \le \| \widehat \W -\widetilde \W\| \|\ee_j^\mathsf{T} \widehat \RR\|_2 \|\widetilde \RR \ee_j\|_2,
	$$
	where $\ee_j$ is a unit column-vector with all entries zero except for an entry one at
	the position $j$. Using Lemma~\ref{appendix lemma resolvent inequalities 1} in the Appendix we conclude that
	$$
	|\widehat \RR_{jj}| \le  |\widetilde \RR_{jj}| + \frac{1}{v}  \|\widehat \W -\widetilde \W\|  \sqrt{|\widehat \RR_{jj}||\widetilde \RR_{jj}|}.
	$$
	It is easy to see that
	$$
	\|\widehat \W - \widetilde \W\|_2^2 = \frac{1}{n} \sum_{j,k} [\E |X_{jk}| \one[|X_{jk}| \geq D n^{\frac12-\phi}]]^2  \le \frac{C}{n^4},
	$$
	We may take $v = v_0: = C_1 n^{-1} \log n$, with $C_1 \geq A_0$. Applying the inequality $2 |a b| \le a^2 + b^2$ we get
	\begin{equation} \label{eq: bound for RR_jj via hat RR jj}
	\sup_{u \in \mathcal U_0}|\RR_{jj}| \le  3\sup_{u \in \mathcal U_0}|\widetilde \RR_{jj}|.
	\end{equation}
	It remains to estimate $\sup_{u \in \mathcal U_0} |\widetilde \RR_{jj}(u + i v_0)|$. It is easy to see that
	\begin{equation}\label{eq: tilde R jj representation deloc}
	\widetilde \RR(z) = (\widetilde \W - z\II)^{-1} = \sigma^{-1} (\breve \W - z \sigma^{-1}\II)^{-1}  = \sigma^{-1}\breve \RR(\sigma^{-1}z).
	\end{equation}
	Applying the resolvent identity we get
	\begin{equation}\label{eq: overline R jj representation deloc}
	\breve \RR(z) - \breve \RR(\sigma^{-1}z) = (z - \sigma^{-1}z) \breve \RR(z) \breve \RR(\sigma^{-1}z).
	\end{equation}
	Combining~\eqref{eq: tilde R jj representation deloc} and~\eqref{eq: overline R jj representation deloc} we obtain
	\begin{align*}
	|\widetilde \RR_{jj}(z) - \breve \RR_{jj}(z)| \le (\sigma^{-1} - 1) |\breve \RR_{jj}(\sigma^{-1} z)| + \frac{|z|(\sigma^{-1} - 1)}{v} \sqrt{|\breve \RR_{jj}(z)| |\breve \RR_{jj}(\sigma^{-1}z)|}.
	\end{align*}
	It is easy to check that $(\sigma^{-1} - 1) \le C n^{-\frac32}$ and $\max(|z \breve \RR_{jj}(z)|, |z \breve \RR_{jj}(\sigma^{-1}z)|) \le C$ for some constant $C$.
	Similarly to the previous calculations we get that
	\begin{equation} \label{eq: bound for tilde RR_jj via breve RR jj}
	\sup_{u \in \mathcal U_0}|\widetilde \RR_{jj}| \le  3\sup_{u \in \mathcal U_0}|\breve \RR_{jj}|.
	\end{equation}
	Note, that the matrix $\breve \W$ satisfies the conditions $\CondTwo$. Applying Lemma~\ref{main lemma} with $p = c \log n$ we obtain
	$$
	\Pb(|\breve \RR_{jj}(u+ i v_0)| \geq C_0 e^\frac{6}{c}) \le \frac{\E |\breve \RR_{jj}(u+i v_0)|^p}{(C_0 e^\frac{6}{c})^p} \le \frac{1}{n^{6}}.
	$$
	We  partition the interval  $\mathcal U_0$ into $k_n : = n^4$ disjoint sub-intervals of equal length, i.e $-2u_0 = x_0 \le x_1 \le ... \le x_{k_n} = 2 u_0$. Then the Newton-Leibniz formula implies
	\begin{align*}
	\sup_{u \in \mathcal U_0} |\breve \RR_{jj}(u + i v_0)| &\le \max_{1\le k \le k_n} \sup_{x_{k-1} \le x \le x_k}|\breve \RR_{jj}(x + i v_0)|\\
	& \le  \max_{1\le k \le k_n} |\breve \RR_{jj}(x_{k-1} + i v_0)| + \max_{1\le k \le k_n} \int_{x_{k-1}}^{x_k} |\breve \RR_{jj}'(u+iv_0)| \, du.
	\end{align*}
	We may write
	$$
	\max_{1\le k \le k_n} \int_{x_{k-1}}^{x_k} |\breve \RR_{jj}'(u+iv_0)| \, du \le \frac{C}{n^{2-\phi}}.
	$$
	Thus we arrive at
	\begin{align}\label{eq: bound for probability of breve RR jj}
	&\Pb\left(\sup_{u \in \mathcal U_0} |\breve \RR_{jj}(u + iv_0)| \geq 2C_0 e^\frac{6}{c} \right) \nonumber\\
	&\qquad\qquad\qquad\qquad\le \sum_{k=1}^{k_n} \Pb \left (|\breve \RR_{jj}(x_{k-1} + iv_0)| \geq C_0 e^\frac{6}{c}\right) \le \frac{C }{n^{2}}.
	\end{align}
	We choose now $\lambda: = v_0$. In view of~\eqref{eq: Q nj},~\eqref{eq: bound for RR_jj via hat RR jj},~\eqref{eq: bound for tilde RR_jj via breve RR jj} and~\eqref{eq: bound for probability of breve RR jj} we get that there exist $C$ and $C_1$ such that
	$$
	\Pb \left(\max_{1 \le j, k \le n} |u_{jk}|^2 \leq \frac{C_1 \log n}{n} \right) \geq 1 - \frac{C}{n^{2-\phi}},
	$$
	which concludes the proof.
\end{proof}

\section{Rate of convergence to the semicircle law}\label{sec: rate of convergence}
In this section we prove Theorem~\ref{th: rate of convergence} and Theorem~\ref{th: rate of conv expected}. We estimate the difference between $F_n$ (resp. $\E F_n$) and $G_{sc}$ in the Kolmogorov metric via the distance between the corresponding Stieltjes transforms. For this purpose we formulate the following smoothing inequality proved in~\cite{GotTikh2003}[Corollary~2.3], which allows to relate   distribution functions to their Stieltjes transforms. For all $x \in [-2, 2]$ let us define $\gamma(x): = 2 -|x|$. Given $\frac{1}{2} > \varepsilon > 0$ we introduce the following intervals $\mathbb J_\varepsilon: = \{x \in [-2, 2]: \gamma(x) \geq \varepsilon\}$ and $\mathbb J_\varepsilon^{'} : =\mathbb J_{\varepsilon/2}$.
\begin{lemma} \label{l: bound for delta}
Let $v_0 > 0$ and $\frac{1}{2} > \varepsilon > 0$ be positive numbers such that
$$
2 (\sqrt 2 + 1) v_0  \le \varepsilon^{\frac32}.
$$
Assume that $F$ is an arbitrary distribution function with the Stieltjes transform $f(z)$. Then for any $V > 0$ and $v' := v'(x): =  v_0/\sqrt{\gamma(x)}, x \in \mathbb J_\varepsilon^{'}$, there exist absolute positive constants $C_1, C_2$ and $C_3$ such that the following inequality holds
\begin{align*}
\Delta(F,G) &\le C_1\int_{-\infty}^\infty |f(u + i V) - s(u + i V)|\, du + C_2 v_0 + C_3 \varepsilon^\frac32 \\
&\qquad\qquad\qquad+2\sup_{x \in \mathbb J_\varepsilon^{'}} \left | \int_{v'}^V (f(x + i v) - s(x + i v)) \, dv \right |.
\end{align*}
\end{lemma}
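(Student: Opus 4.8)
The plan is to obtain this as a refinement of Bai's classical smoothing inequality, in which the smoothing scale $v'(x)=v_0/\sqrt{\gamma(x)}$ is allowed to depend on $x$ and is tuned to the square-root vanishing of the semicircle density $g_{sc}$ at $\pm 2$. Write $h:=f-s$ for the Stieltjes transform of the signed, total-mass-zero measure $d(F-G)$, and for $v>0$ let $F_v,G_v$ be the Poisson (Cauchy-kernel) smoothings of $F,G$ at scale $v$. Two elementary identities drive the proof. The inversion formula
\[
\pi\bigl(F_v(x)-G_v(x)\bigr)=\imag\int_{-\infty}^{x}h(u+iv)\,du
\]
is absolutely convergent because $f$ and $s$ are Stieltjes transforms of probability measures, so $h(u+iv)=O(|u|^{-2})$ as $|u|\to\infty$. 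Differentiating in $v$, using $\partial_v h(u+iv)=i\,\partial_u h(u+iv)$ and $h(u+iv)\to 0$ as $u\to-\infty$, gives $\partial_v\!\int_{-\infty}^{x}h(u+iv)\,du=i\,h(x+iv)$, hence for $0<v'\le V$
\[
\pi\bigl(F_{v'}(x)-G_{v'}(x)\bigr)=\imag\int_{-\infty}^{x}h(u+iV)\,du-\int_{v'}^{V}\re\,h(x+iw)\,dw .
\]
Taking absolute values and using $|\re\,h|\le|h|$ together with $\bigl|\imag\int_{-\infty}^{x}h(u+iV)\,du\bigr|\le\int_{-\infty}^{\infty}|h(u+iV)|\,du$ yields
\[
\sup_{x\in\mathbb{J}_\varepsilon'}\bigl|F_{v'(x)}(x)-G_{v'(x)}(x)\bigr|\le\frac1\pi\int_{-\infty}^{\infty}|f(u+iV)-s(u+iV)|\,du+\frac1\pi\sup_{x\in\mathbb{J}_\varepsilon'}\Bigl|\int_{v'(x)}^{V}\bigl(f(x+iw)-s(x+iw)\bigr)\,dw\Bigr|,
\]
which already furnishes the first and last terms of the claimed bound.

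The remaining task is the desmoothing: to pass from $\sup_x\bigl|F_{v'(x)}(x)-G_{v'(x)}(x)\bigr|$ back to $\Delta(F,G)=\sup_x|F(x)-G(x)|$ at the cost of $C_2 v_0+C_3\varepsilon^{3/2}$. Here I would use Bai's device, exploiting monotonicity of $F$, a shift of the argument by $\pm a\,v'(x)$ with a suitable constant $a$ (the value $\sqrt2+1$ in the hypothesis is linked to this choice), and the elementary bound $\frac1\pi\arctan a\ge\frac12-\frac1{\pi a}$; this produces a self-improving inequality for $\Delta(F,G)$ whose solution gives
\[
\Delta(F,G)\le c\sup_{x\in\mathbb{J}_\varepsilon'}\bigl|F_{v'(x)}(x)-G_{v'(x)}(x)\bigr|+c\sup_{x\in\mathbb{J}_\varepsilon'}\bigl(G(x+a\,v'(x))-G(x-a\,v'(x))\bigr)+C\varepsilon^{3/2}.
\]
The modulus-of-continuity term is log-free precisely because it is a genuine increment of $G$ and not a heavy-tailed Cauchy integral. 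The hypothesis $2(\sqrt2+1)v_0\le\varepsilon^{3/2}$ forces $v'(x)=v_0/\sqrt{\gamma(x)}\le\sqrt2\,v_0\,\varepsilon^{-1/2}<\varepsilon/2$ on $\mathbb{J}_\varepsilon'$, so the $a\,v'(x)$-neighbourhood of any $x\in\mathbb{J}_\varepsilon$ stays inside $\mathbb{J}_\varepsilon'$, where $g_{sc}(y)\le C\sqrt{\gamma(y)}\le C'\sqrt{\gamma(x)}$; hence
\[
G(x+a\,v'(x))-G(x-a\,v'(x))\le 2a\,C'\,v'(x)\sqrt{\gamma(x)}=2a\,C'\,v_0 .
\]
Finally the contribution of the edge region $\{\gamma<\varepsilon\}\cup\{|x|>2\}$ is absorbed into $C\varepsilon^{3/2}$: there $G$ is nearly constant, varying by at most $C\int_{\{\gamma\le\varepsilon\}}\sqrt{\gamma(\lambda)}\,d\lambda\le C'\varepsilon^{3/2}$, while $F$ is monotone, so for such $x$ the quantity $|F(x)-G(x)|$ does not exceed $\sup_{y\in\mathbb{J}_\varepsilon}|F(y)-G(y)|+C'\varepsilon^{3/2}$, and the endpoints of $\mathbb{J}_\varepsilon$ lie in $\mathbb{J}_\varepsilon'$ and are already controlled. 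Inserting the bound of the first paragraph then completes the proof.

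The step I expect to be the main obstacle is this desmoothing with an $x$-dependent width that blows up towards the spectral edge: one must check that Bai's self-bounding argument still closes and that the resulting increment of $G$ stays $O(v_0+\varepsilon^{3/2})$ uniformly in $x$, which is exactly where the explicit $\sqrt{\gamma}$-behaviour of $g_{sc}$ and the precise scaling constraint $2(\sqrt2+1)v_0\le\varepsilon^{3/2}$ (keeping $v'(x)$ below the local edge scale on $\mathbb{J}_\varepsilon'$) are indispensable. The other ingredients — the Stieltjes inversion formula, the $v$-descent identity, and the $O(|u|^{-2})$ tail decay of $h$ — are routine manipulations with Poisson integrals.
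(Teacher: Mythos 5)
The paper itself supplies no proof of this lemma; it simply points to~\cite{GotTikh2003}[Corollary~2.3] and~\cite{GotzeTikh2014rateofconv}[Proposition~2.1], so there is no internal argument to check you against. Your reconstruction does follow the same route that those references take: the Poisson--smoothed inversion identity
$\pi(F_v(x)-G_v(x))=\int_{-\infty}^x\imag h(u+iv)\,du$, the $v$-descent using $\partial_v h=i\,\partial_u h$ to trade the value at $v'$ for the value at $V$ plus a vertical integral, and then a Bai-type desmoothing with an $x$-dependent width $v'(x)=v_0/\sqrt{\gamma(x)}$ adapted to the $\sqrt{\gamma}$-decay of $g_{sc}$ near $\pm2$. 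Your arithmetic on the constraint $2(\sqrt2+1)v_0\le\varepsilon^{3/2}$ is also correct: on $\mathbb J_\varepsilon'$ it gives $v'(x)\le\varepsilon/(2+\sqrt2)$, and on $\mathbb J_\varepsilon$ it gives $(\sqrt2+1)\,v'(x)\le\varepsilon/2$, which is precisely what keeps the $\pm(\sqrt2+1)v'(x)$-shifted points inside $\mathbb J_\varepsilon'$.

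The genuine gap, which you flag yourself, is the desmoothing step. You write down a ``self-improving inequality'' and its claimed solution without deriving it, and this is exactly where the $x$-dependence of the smoothing scale bites: Bai's original argument fixes a single $v$, extracts a near-maximizer $x_0$ of $|F-G|$, and compares $F_v(x_0\pm av)$ with $F(x_0)$ by monotonicity; with a width $v'(x)$ that varies with $x$, one must (a) choose where to evaluate the smoothed difference --- at $x_0$, at $x_0\pm av'(x_0)$, or at a point where $v'$ is re-evaluated --- in a way that keeps the monotonicity comparison valid, (b) verify that the near-maximizer $x_0$ can always be taken inside $\mathbb J_\varepsilon$ (which hinges on your edge-region estimate $\int_{\gamma\le\varepsilon}\sqrt{\gamma}\,d\lambda\lesssim\varepsilon^{3/2}$, correctly stated but not assembled into the required statement), and (c) track that the resulting absolute constants close. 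None of this is carried out, so while the architecture of the proof is right, the lemma is not actually proved as written. A second, more minor, loose end: the decay $h(u+iv)=O(|u|^{-2})$ used to justify the $v$-descent is not automatic for an \emph{arbitrary} distribution function $F$; one should either note that the claimed inequality is vacuous when $\int|f(u+iV)-s(u+iV)|\,du=\infty$ and reduce to the integrable case, or argue the convergence of $\int_{-\infty}^x h(u+iv)\,du$ directly from the cancellation of the leading $-1/z$ terms rather than from an unverified $O(|u|^{-2})$ bound.
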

\begin{proof}
See~\cite{GotTikh2003}[Corollary~2.3] or~\cite{GotzeTikh2014rateofconv}[Proposition~2.1].
\end{proof}
In what follows we will need the following version of this lemma.
\begin{corollary}\label{cor: bound for delta}
Assuming the conditions of Lemma~\ref{l: bound for delta} we have
\begin{align}\label{eq: smoothing inequality}
\E^\frac{1}{p}[\Delta_n^{*}]^p &\le C_1\int_{-\infty}^\infty \E^\frac{1}{p}|m_n(u + i V) - s(u + i V)|^p\, du + C_2 v_0 + C_3 \varepsilon^\frac32 \nonumber \\
&\qquad\qquad\qquad+C_1\E^\frac{1}{p} \sup_{x \in \mathbb J_\varepsilon^{'}} \left | \int_{v'}^V (m_n(x + i v) - s(x + i v)) \, dv \right |^p.
\end{align}
\end{corollary}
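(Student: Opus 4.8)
The plan is to apply Lemma~\ref{l: bound for delta} \emph{pathwise}, i.e. for each fixed realization $\omega\in\Omega$, with $F = F_n$ (the ESD, which depends on $\omega$) and Stieltjes transform $f = m_n$. Since $F_n$ is a genuine distribution function for every $\omega$ and the hypothesis $2(\sqrt2+1)v_0\le\varepsilon^{3/2}$ is deterministic, Lemma~\ref{l: bound for delta} yields, for every $\omega$,
$$
\Delta_n^{*} \le C_1\int_{-\infty}^\infty |m_n(u+iV) - s(u+iV)|\,du + C_2 v_0 + C_3\varepsilon^{3/2} + 2\sup_{x\in\mathbb J_\varepsilon^{'}}\left|\int_{v'}^V (m_n(x+iv)-s(x+iv))\,dv\right|.
$$
First I would take the $L^p(\Omega)$-norm (for $p\ge 1$) of both sides and use Minkowski's inequality (the triangle inequality for $\|\cdot\|_{L^p}$) to bound the right-hand side by the sum of the $L^p$-norms of the two random terms plus the deterministic quantities $C_2 v_0 + C_3\varepsilon^{3/2}$. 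Here one notes that $\Delta_n^{*}$, being the supremum of $|F_n(x)-G_{sc}(x)|$, is measurable because $F_n$ is right-continuous so the supremum is attained along a countable set (rationals together with the eigenvalue locations).

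Next I would treat the first term by Minkowski's integral inequality, which gives
$$
\E^{1/p}\left[\left(\int_{-\infty}^\infty |m_n(u+iV)-s(u+iV)|\,du\right)^p\right] \le \int_{-\infty}^\infty \E^{1/p}|m_n(u+iV)-s(u+iV)|^p\,du .
$$
Applying this requires joint measurability of $(u,\omega)\mapsto m_n(u+iV)-s(u+iV)$, which holds since $m_n(u+iV)=\frac1n\Tr(\W-(u+iV)\II)^{-1}$ is continuous in $u$ and measurable in $\omega$, together with $\sigma$-finiteness of Lebesgue measure on $\R$; and the resulting integral is finite because $m_n(z)=-1/z+O(1/z^2)$ and $s(z)=-1/z+O(1/z^2)$ as $|z|\to\infty$ with $v=V>0$ fixed, so the integrand is $O(u^{-2})$. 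The remaining term is already in the desired form once one absorbs the factor $2$ into $C_1$ (enlarging $C_1$ if necessary); the supremum over $x\in\mathbb J_\varepsilon^{'}$ is over a compact set of a function continuous in $x$, hence measurable. Collecting the estimates and relabelling constants yields exactly~\eqref{eq: smoothing inequality}.

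This corollary presents no real obstacle: it is a routine "$L^p$-ification" of Lemma~\ref{l: bound for delta}. The only points needing a moment of care are the joint measurability underpinning Minkowski's integral inequality and the integrability of $|m_n-s|$ over the whole real line for $v=V$ fixed, both of which follow immediately from the explicit form of the Stieltjes transforms and the strict positivity of $V$.
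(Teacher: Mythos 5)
Your proof is correct and follows exactly the routine argument the paper intends (the paper omits the proof, referring only to \cite{GotzeTikh2014rateofconv}[Corollary 2.1]): apply Lemma~\ref{l: bound for delta} pathwise to the random distribution function $F_n$, take $L^p(\Omega)$-norms and use the triangle inequality together with Minkowski's integral inequality on the $du$-integral, and enlarge $C_1$ to absorb the factor $2$. The measurability and integrability remarks are fine; note only that Minkowski's integral inequality holds in the extended sense $[0,\infty]$ anyway, so the finiteness of the $u$-integral is not needed for the inequality itself, only for its later usefulness.
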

\begin{proof}
The proof is the direct consequence of the previous lemma and we omit it. For details the interested reader is referred to~\cite{GotzeTikh2014rateofconv}[Corollary 2.1].
\end{proof}
\begin{proof}[Proof of Theorem~\ref{th: rate of convergence}] We start from the part $(i)$. We proceed as in the proof of Theorem~1.1 in~\cite{GotzeTikh2014rateofconv}. We choose in Corollary~\ref{cor: bound for delta} the following values for the parameters $v_0, \varepsilon$ and $V$. Let us take $v_0: = A_0 n^{-1} \log n$, $\varepsilon: = (2 v_0 a)^\frac23$ and $V: = 4$.
We may partition $\mathbb J_\varepsilon^{'}$ into $k_n := n^4$ disjoint subintervals of equal length. Let us denote the endpoints of these intervals by $x_k, k = 0, ... , k_n$. We get $-2 + \varepsilon = x_0 < x_1 < ... < x_{k_n} = 2 - \varepsilon$. For simplicity we denote $\Lambda_n(u+iv): = m_n(u+iv) - s(u+iv)$ but we will not omit the argument.
We start to estimate the second integral in the r.h.s. of~\eqref{eq: smoothing inequality}. It is easy to see that
\begin{align}\label{eq: second integral 0}
&\sup_{x \in \mathbb J_\varepsilon^{'}} \left| \int_{v'}^V \Lambda_n(x + i v) \, dv \right | \le \max_{1 \le k \le k_n} \sup_{x_{k-1} \le x \le x_k} \left| \int_{v'}^V \Lambda_n(x + i v) \, dv \right|.
\end{align}
Applying the Newton-Leibniz formula we may write
\begin{align}\label{eq: second integral 1}
\sup_{x_{k-1} \le x \le x_k} \left| \int_{v'}^V \Lambda_n(x + i v) \, dv \right| &\le \left| \int_{v'}^V \Lambda_n(x_{k-1} + i v) \, dv \right| \nonumber\\
&+ \int_{x_{k-1}}^{x_k}  \int_{v'}^V |\Lambda_n'(x + i v)| \, dv \,dx.
\end{align}
It follows from Cauchy's integral formula that for all $z = x + i v$ with $v \geq v_0$  we have
\begin{equation}\label{eq: second integral 2}
|\Lambda_n'(x + i v)| \le \frac{C}{v^2} \le C n^2.
\end{equation}
We may conclude from~\eqref{eq: second integral 1} and~\eqref{eq: second integral 2} that
\begin{align*}
&\sup_{x_{k-1} \le x \le x_k} \left| \int_{v'}^V \Lambda_n(x + i v) \, dv \right| \le \left| \int_{v'}^V \Lambda_n(x_{k-1} + i v) \, dv \right| + \frac{C}{n}.
\end{align*}
Applying this inequality to~\eqref{eq: second integral 0} together with expectations we obtain
\begin{align}\label{eq: second integral 3}
\E\sup_{x \in \mathbb J_\varepsilon^{'}} \left| \int_{v'}^V \Lambda_n(x + i v) \, dv \right |^p &\le \E \max_{1 \le k \le k_n} \left| \int_{v'}^V \Lambda_n(x_{k-1} + i v) \, dv \right|^p + \frac{C^p}{n^p}\nonumber \\
&\le \sum_{k=1}^{k_n} \left|  \int_{v'}^V \E^\frac{1}{p}\big|\Lambda_n(x_{k-1} + i v)\big|^p \, dv \right|^p + \frac{C^p}{n^p}.
\end{align}
Since $x \in \mathbb J_\varepsilon^{'}$ it follows from Theorem~\ref{th:main} that
\begin{align}\label{eq: bound for difference of ST}
\E|\Lambda_n(x + i v)|^p \le \left(\frac{Cp}{nv}\right)^p.
\end{align}
Choosing $p = A_1(n v_0)^{\frac{1-2\alpha}{2}} = c \log n$ we finally get from~\eqref{eq: second integral 3} and~\eqref{eq: bound for difference of ST} that
\begin{align}\label{eq: estimate 1}
&\E^\frac{1}{p} \sup_{x \in \mathbb J_\varepsilon^{'}} \left| \int_{v'}^V \Lambda_n(x + i v) \, dv \right |^p \le \frac{C k_n^\frac{1}{p} \log^2 n }{n}   + \frac{C}{n} \le \frac{ C \log^2 n}{n}.
\end{align}
It remains to estimate the first of the integrals in~\eqref{eq: smoothing inequality}. It was proved in~\cite{GotzeNauTikh2015b}[Inequality~2.8] that
\begin{align}\label{eq: estimate 2 assumption}
 \E^\frac{1}{p}|\Lambda_n(u + i V)|^p \le \frac{Cp |s(z)|^\frac{p+1}{p}}{n},
\end{align}
which holds for all $z = u + i V, u \in \R$. Hence,
\begin{align}\label{eq: estimate 2}
&\int_{-\infty}^\infty \E^\frac{1}{p}|\Lambda_n(u + i V)|^p\, du \le \frac{C p}{n} \int_{-\infty}^\infty \int_{-\infty}^\infty \frac{d u \,d G_{sc}(x)}{((x-u)^2 + V^2)^\frac{p+1}{p}} \le \frac{C\log^2 n}{n}.
\end{align}
Combining now~\eqref{eq: smoothing inequality},~\eqref{eq: estimate 1} and~\eqref{eq: estimate 2}  we get
$$
\E^\frac{1}{p}[\Delta_n^{*}]^p \le \frac{C\log^2 n}{n}.
$$
Since $\E^\frac{1}{p}[\Delta_n^{*}]^p$ is non-decreasing function of $p$, the last inequality remains valid for all $1 \le p \le c \log n$. To finish the proof of Theorem~\ref{th: rate of convergence} it remains to apply Markov's inequality
$$
\Pb\left(\Delta_n^{*} \geq K \right) \le  \frac{\E [\Delta_n^{*}]^p }{K^p} \le \frac{C^p \log^{2p} n}{K^p n^p}.
$$

\end{proof}

\begin{proof}[Proof of Theorem~\ref{th: rate of conv expected}]
Applying Lemma~\ref{l: bound for delta} with $F: = \E F_n$ we get
\begin{align}\label{smothing}
\Delta_n &\le 2\int_{-\infty}^\infty |\E m_n(u + i V) - s(u + i V)|\, du + C_1 v_0 + C_2 \varepsilon^\frac32 \nonumber \\
&\qquad\qquad\qquad+2\sup_{x \in \mathbb J_\varepsilon^{'}} \left | \int_{v'}^V (\E m_n(x + i v) - s(x + i v)) \, dv \right |.
\end{align}
Let us take $v_0: = A_0 n^{-1}$, $\varepsilon: = (2 v_0 a)^\frac23$ and $V: = 4$. As before  we denote $\Lambda_n(u+iv): = m_n(u+iv) - s(u+iv)$. The bound for the first integral follows from~\cite{GotTikh2015}[Inequality~3.11]
$$
|\E \Lambda_n(u + i V)| \le \frac{C |s(z)|^2}{n}. 
$$
This bound gives 
\begin{equation}
\int_{-\infty}^\infty |\E \Lambda_n(u + i V)|\, du \le \frac{C}{n}.
\end{equation}
To estimate the second integral we shall use the same arguments as in the proof of Lemma~\ref{lem: bound in the optimal region}.
We denote by $Y_{jk}, 1 \le j \le k \le n$ a triangular set of random variables such that $|Y_{jk}| \le D$, for some $D$ chosen later, and
$$
\E X_{jk}^s = \E Y_{jk}^s \, \text { for } \, s = 1, ... , 4.
$$
By Lemma~\ref{lem: random variables} these random variables exist. Let us denote $\W^\y: = \frac{1}{\sqrt n} \Y, \RR^\y: = (\W^\y - z \II)^{-1}$ and $m_n^\y(z): = \frac{1}{n} \Tr \RR^\y(z)$. We will show below that for all $j, j = 1, ... , n$, there exists $\phi > 0$ such that
\begin{equation}\label{eq: resolvent comparison}
|\E \RR_{jj}(z) - \E \RR_{jj}^\y(z)| \le \frac{C}{n^{1+\phi} v}
\end{equation}
and, hence, 
\begin{equation}\label{eq: stieltjes comparison}
|\E m_n(z) - \E m_n^\y(z)| \le \frac{C}{n^{1+\phi} v}.
\end{equation}
It follows from~\cite{GotTikh2015}[Theorem~1.3] that
\begin{equation}\label{eq: stieltjes sub gaussian}
|\E m_n^\y(z) - s(z)| \le \frac{C}{n v^\frac34} + \frac{C}{n^\frac32 v^\frac32 |z^2-4|^\frac14}.
\end{equation}
Inequalities~\eqref{eq: stieltjes sub gaussian} and~\eqref{eq: stieltjes comparison} together imply that
\begin{equation*}\label{eq: stieltjes general}
|\E m_n(z) - s(z)| \le \frac{C}{n v^\frac34} + \frac{C}{n^{1+\phi} v} + \frac{C}{n^\frac32 v^\frac32 |z^2-4|^\frac14}.
\end{equation*}
Calculating the second integral in~\eqref{smothing} we get
$$
\Delta_n \le \frac{C}{n}.
$$
It remains to prove~\eqref{eq: resolvent comparison}. For every $j, j = 1, ... , n$, we do $\frac{n(n+1)}{2}$ replacements and corresponding exchanges of $\RR_{jj}$ by $\RR_{jj}^\y$. Let $\J, \K \subset \T$. We denote by $\W^{(\J, \K)}$ a random matrix $\W$ 
with entries $\frac{1}{\sqrt n} Y_{\mu \nu}$
in the positions $(\mu, \nu), \mu \in \J, \nu \in \K $. Assume that we have already replaced the entries in positions $(\mu, \nu), \mu \in \J, \nu \in \K$ and want to replace in addition the entry in position $(a, b), a \in \T \setminus \J, b \in \T \setminus \K$. Without loss of generality we may assume that $\J = \emptyset, \K = \emptyset$ (hence $\W^{(\J, \K)} = \W$) and denote by $\V: = \W^{(\{a\}, \{b\})}$. Introduce
$$
\EE^{(a,b)} =
\begin{cases}
\ee_{a} \ee_b^\mathsf{T} + \ee_b \ee_a^\mathsf{T}, & 1 \le a < b \le n, \\
\ee_{a} \ee_a^\mathsf{T}, &a = b.
\end{cases}
$$
and $\U : = \W - \frac{X_{ab}}{\sqrt n}  \EE^{(a,b)}$, where $\ee_j$ denotes a unit column-vector with all entries zeros except in the $j$-th position. Using these notations we may write
$$
\W = \U + \frac{1}{\sqrt n} X_{ab} \EE^{(a,b)}, \quad \V = \U + \frac{1}{\sqrt n} Y_{ab} \EE^{(a,b)}.
$$
Recall that $\RR: = (\W - z \II)^{-1}$ and denote $\RS: = (\V - z \II)^{-1}$ and $\RT: = (\U - z \II)^{-1}$. Applying~\eqref{resolvent expansion R} and~\eqref{resolvent expansion S} we get 
\begin{align}\label{resolvent expansion R 2}
&\E\RR_{jj} - \E \RS_{jj} = \sum_{\mu=5}^m \frac{(-1)^\mu}{n^\frac{\mu}{2}} \E [X_{ab}^\mu - Y_{ab}^\mu]\E [(\RT \EE^{(a,b)})^\mu \RT]_{jj} \nonumber \\
&+ \frac{(-1)^{m+1}}{n^\frac{m+1}2} \E X_{ab}^{m+1} [(\RT \EE^{(a,b)})^{m+1} \RR]_{jj} - \frac{(-1)^{m+1}}{n^\frac{m+1}{2}} \E Y_{ab}^{m+1} [(\RT \EE^{(a,b)})^{m+1} \RS]_{jj}.
\end{align}
Without loss of generality we may assume that $|X_{jk}| \le D n^{\frac12 - \phi}$ for some $\phi > 0$ depending on $\alpha$.
Choosing $m$ and applying Lemma~\ref{main lemma non optimal} we estimate the third term in~\eqref{resolvent expansion R 2}  as follows
$$
\frac{1}{n^\frac{m+1}2} \E |X_{ab}|^{m+1} |[(\RT \EE^{(a,b)})^{m+1} \RR]_{jj}| \le \frac{C}{n^{3+\phi} v}.
$$
The same bound obviously holds for the fourth term in~\eqref{resolvent expansion R 2}. 
Let us consider now the first term and investigate the part corresponding to $X_{ab}$ (the same estimates are valid for the part corresponding to $Y_{ab}$) 
$$
\sum_{\mu=5}^m \frac{(-1)^\mu}{n^\frac{\mu}{2}} \E [X_{ab}^\mu]\E [(\RT \EE^{(a,b)})^\mu \RT]_{jj}
$$
It is straightforward to check that $[(\RT \EE^{(a,b)})^{\mu} \RT]_{jj}$ is the sum of $2^{\nu}$ terms of the following type
$$
\RT_{j i_1} \RT_{i_1 i_2} ... \RT_{i_m i_m} \RT_{i_m j},
$$
where $i_l = a$ or $i_l = b$ for $l = 1, ... , m$. Assume that $a$ and $b$ are not equal to $j$. Then, the first and the last terms in the last product are off diagonal entries of the resolvent $\RT$. Applying H{\"o}lder's inequality, Lemma~\ref{l: off diagonal entries} and Lemma~\ref{main lemma non optimal} we get  
$$
\sum_{\mu=5}^m \frac{1}{n^\frac{\mu}{2}} \E |X_{ab}|^\mu \E |[(\RT \EE^{(a,b)})^\mu \RT]_{jj}| \le \frac{C}{n^{3+\phi} v}.
$$
Assume now that $a$ or (and) $b$ are equal to $j$. The number of configurations of this type is  of order $n$. Similarly to the previous inequality we thus obtain the bound
$$
\sum_{\mu=5}^m \frac{1}{n^\frac{\mu}{2}} \E |X_{ab}|^\mu \E |[(\RT \EE^{(a,b)})^\mu \RT]_{jj}| \le \frac{C}{n^{2+\phi}}.
$$
Repeating the same steps for all $\frac{n(n+1)}{2}$ pairs of $(a,b)$ we arrive at~\eqref{eq: resolvent comparison} and~\eqref{eq: stieltjes comparison} respectively.
\end{proof}

\appendix
\section{Auxiliary lemmas}
\subsection{Inequalities for resolvent matrices}
In this section we collect some inequalities for the resolvent of the matrix $\W$.
\begin{lemma}\label{appendix lemma resolvent relations on different v}
For any $z = u + i v \in \C^{+}$ we have for any $s \geq 1$
\begin{equation*}
|\RR_{jj}^{(\J)}(u + i v/s)| \le s |\RR_{jj}^{(\J)}(u + i v)|.
\end{equation*}
and
\begin{equation}\label{appendix eq resolvent relations on different v 2}
\frac{1}{|u+iv/s_0 + m_n^{(\J)}(u+iv/s_0)|} \le \frac{s_0}{|u+iv + m_n^{(\J)}(u+iv)|}.
\end{equation}
\end{lemma}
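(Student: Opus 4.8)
The plan is to reduce both inequalities to a single elementary observation about the moduli of Stieltjes transforms. First I would isolate the following algebraic fact: if $X,Y>0$ and $0<v'\le v$ satisfy
\[
X\le Y+(v-v')\sqrt{\frac{XY}{v'v}},
\]
then $X\le \frac{v}{v'}\,Y$. To see this, put $t:=\sqrt{X/Y}$; the hypothesis becomes $t^{2}-\frac{v-v'}{\sqrt{v'v}}\,t-1\le 0$, and since the discriminant $\frac{(v-v')^{2}}{v'v}+4=\frac{(v+v')^{2}}{v'v}$ is a perfect square, the positive root of the quadratic equals $\sqrt{v/v'}$, whence $t\le\sqrt{v/v'}$. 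This is the step that upgrades a crude additive bound to the sharp multiplicative factor $s$ (resp. $s_0$), and I expect it to be the only genuinely delicate point of the proof.

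For the first inequality I would use that $\RR_{jj}^{(\J)}(z)=\int(\lambda-z)^{-1}\,d\nu(\lambda)$, where $\nu$ is the spectral measure of the (real symmetric) matrix $\W^{(\J)}$ at the $j$-th coordinate: if $\uu_k$ is an orthonormal eigenbasis of $\W^{(\J)}$ with eigenvalues $\mu_k$, then $\RR_{jj}^{(\J)}(z)=\sum_k|\uu_k(j)|^{2}(\mu_k-z)^{-1}$ and $\sum_k|\uu_k(j)|^{2}=1$, so $\nu$ is a probability measure. Writing $z'=u+iv/s$ and $z=u+iv$, the identity $(\lambda-z')^{-1}-(\lambda-z)^{-1}=(z'-z)(\lambda-z')^{-1}(\lambda-z)^{-1}$, integrated against $\nu$, gives
\[
\RR_{jj}^{(\J)}(z')-\RR_{jj}^{(\J)}(z)=i\Big(\tfrac{v}{s}-v\Big)\int\frac{d\nu(\lambda)}{(\lambda-z')(\lambda-z)} .
\]
Taking absolute values, applying the Cauchy--Schwarz inequality, and using $\int|\lambda-u-iv_{*}|^{-2}\,d\nu=\imag\RR_{jj}^{(\J)}(u+iv_{*})/v_{*}$ together with $\imag\RR_{jj}^{(\J)}\le|\RR_{jj}^{(\J)}|$, I arrive at
\[
|\RR_{jj}^{(\J)}(z')|\le|\RR_{jj}^{(\J)}(z)|+\Big(v-\tfrac{v}{s}\Big)\sqrt{\frac{|\RR_{jj}^{(\J)}(z')|\,|\RR_{jj}^{(\J)}(z)|}{(v/s)\,v}},
\]
which is exactly the hypothesis of the algebraic fact with $v'=v/s$, $X=|\RR_{jj}^{(\J)}(z')|$, $Y=|\RR_{jj}^{(\J)}(z)|$; hence $X\le sY$, the claim.

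For the second inequality I would run the same scheme for $w(z):=z+m_n^{(\J)}(z)$. Writing $m_n^{(\J)}(z)=\int(\lambda-z)^{-1}\,d\mu(\lambda)$ with $\mu$ the spectral distribution of $\W^{(\J)}$ (a finite positive measure), one has $\imag w(u+iv_{*})/v_{*}=1+\int|\lambda-u-iv_{*}|^{-2}\,d\mu$, and the resolvent identity for the trace gives, with $z'=u+iv/s_0$, $z=u+iv$,
\[
w(z')-w(z)=i\Big(\tfrac{v}{s_0}-v\Big)\Big[\,1+\int\frac{d\mu(\lambda)}{(\lambda-z')(\lambda-z)}\,\Big].
\]
Applying Cauchy--Schwarz to the inner product $(a,f),(b,g)\mapsto ab+\int fg\,d\mu$ yields $1+\int|\lambda-z'|^{-1}|\lambda-z|^{-1}\,d\mu\le\big(\imag w(z')/(v/s_0)\big)^{1/2}\big(\imag w(z)/v\big)^{1/2}$, and then $\imag w\le|w|$ puts $|w(z)|\le|w(z')|+(v-v/s_0)\sqrt{|w(z')||w(z)|/((v/s_0)v)}$ into the form of the algebraic fact with $X=|w(z)|$, $Y=|w(z')|$, $v'=v/s_0$. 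Therefore $|w(z)|\le s_0|w(z')|$, which after dividing is precisely inequality~\eqref{appendix eq resolvent relations on different v 2}. Throughout one only uses that $\imag\RR_{jj}^{(\J)}(z)>0$ and $\imag w(z)\ge\imag z>0$ on the upper half-plane, so none of the quantities inverted above vanishes.
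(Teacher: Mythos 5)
Your proof is correct, and it is the standard ``resolvent identity $+$ Cauchy--Schwarz $+$ solve a quadratic in $\sqrt{X/Y}$'' argument that this lemma is usually established by (the paper itself only cites \cite{GotzeNauTikh2015a}[Lemma~C.1], which in turn follows \cite{Schlein2014}, rather than reproducing the proof). Your isolation of the algebraic fact is exactly right: the discriminant computation showing the positive root is $\sqrt{v/v'}$ is the one nontrivial point, and your spectral-measure representation $\RR_{jj}^{(\J)}(z)=\int(\lambda-z)^{-1}\,d\nu(\lambda)$ with $\nu$ a probability measure, together with $\int|\lambda-u-iv_*|^{-2}\,d\nu=\imag\RR_{jj}^{(\J)}(u+iv_*)/v_*$, is equivalent to the matrix-resolvent-identity-plus-Ward-identity phrasing used in the cited sources.

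The only place where you take a mildly different route is the second inequality. Your device of running Cauchy--Schwarz with the extra constant term in the inner product $\langle(a,f),(b,g)\rangle=ab+\int fg\,d\mu$ is clean and correct. A shorter alternative that more transparently parallels the first part is to note that $h(z):=-1/(z+m_n^{(\J)}(z))$ maps $\C^{+}$ into $\C^{+}$ (since $\imag(z+m_n^{(\J)}(z))>0$) and behaves like $-1/z$ at infinity, hence is itself the Stieltjes transform of a probability measure; the second inequality then \emph{is} the first inequality applied to $h$ in place of $\RR_{jj}^{(\J)}$, with no new estimate needed. Both routes give the same sharp constant $s_0$, so the choice is purely a matter of taste.
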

\begin{proof}
See~\cite{GotzeNauTikh2015a}[Lemma~C.1].
\end{proof}

\begin{lemma}\label{appendix lemma imag resolvent relations on different v}
Let $g(v): = g(u+iv)$ be the Stieltjes transform of some distribution function $G(x)$. Then for any $s \geq 1$
\begin{align*}
\imag g(v/s) \le s \imag g(v) \, \text{ and } \, \imag g(v) \le s \imag g(v/s).
\end{align*}
\end{lemma}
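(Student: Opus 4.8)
The plan is to reduce both inequalities to elementary monotonicity properties of the Poisson kernel, in the same spirit as Lemma~\ref{appendix lemma resolvent relations on different v}. Writing $g(u+iv)=\int_{\R}\frac{dG(x)}{x-u-iv}$, one has the standard representation
\begin{equation*}
\imag g(u+iv)=\int_{\R}\frac{v}{(x-u)^2+v^2}\,dG(x),
\end{equation*}
which is where all the structure comes from: since $G$ is a distribution function this integral is finite for every $v>0$, so we are free to compare integrands pointwise.

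First I would record the two pointwise facts. For a fixed $t\in\R$ the map $v\mapsto \dfrac{v^2}{t^2+v^2}=1-\dfrac{t^2}{t^2+v^2}$ is non-decreasing on $(0,\infty)$, while $v\mapsto\dfrac{1}{t^2+v^2}$ is non-increasing. Integrating these against $dG(x)$ with $t=x-u$ shows that $v\mapsto v\,\imag g(u+iv)$ is non-decreasing and $v\mapsto v^{-1}\imag g(u+iv)$ is non-increasing on $(0,\infty)$. Equivalently, and even more directly, one can verify the two inequalities
\begin{equation*}
\frac{v/s}{t^2+(v/s)^2}\le s\,\frac{v}{t^2+v^2},\qquad
\frac{v}{t^2+v^2}\le s\,\frac{v/s}{t^2+(v/s)^2},
\end{equation*}
which after clearing denominators become $t^2\le s^2t^2$ and $v^2/s^2\le v^2$ respectively, both trivially true for $s\ge1$.

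Then I would simply evaluate at the two heights $v$ and $v/s$, using $0<v/s\le v$. From $v\mapsto v\,\imag g(u+iv)$ non-decreasing we get $(v/s)\,\imag g(u+i v/s)\le v\,\imag g(u+iv)$, i.e. $\imag g(v/s)\le s\,\imag g(v)$; from $v\mapsto v^{-1}\imag g(u+iv)$ non-increasing we get $\frac{\imag g(u+i v/s)}{v/s}\ge\frac{\imag g(u+iv)}{v}$, i.e. $\imag g(v)\le s\,\imag g(v/s)$. Alternatively one integrates the two displayed pointwise inequalities in $dG$ directly. There is no genuine obstacle; the only minor point requiring care is the legitimacy of the pointwise comparison under the integral, which is immediate because $dG$ is a finite positive measure.
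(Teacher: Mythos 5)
Your proof is correct and takes what is essentially the only natural route: write $\imag g(u+iv)$ as a Poisson integral against $dG$, observe that $v\mapsto v^2/(t^2+v^2)$ is nondecreasing and $v\mapsto 1/(t^2+v^2)$ is nonincreasing in $v>0$, and integrate; the paper itself refers the reader to Lemma~C.2 of \cite{GotzeNauTikh2015a}, which uses the same elementary kernel-monotonicity argument. Both the monotonicity formulation and the direct pointwise-inequality formulation you give are fine, and the integration step is legitimate since $dG$ is a finite positive measure.
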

\begin{proof}
See~\cite{GotzeNauTikh2015a}[Lemma~C.2].
\end{proof}

\begin{lemma}\label{appendix lemma resolvent inequalities 1}
For any $z = u + i v \in \C^{+}$ we have
\begin{equation}\label{appendix lemma resolvent inequality 1}
\frac{1}{n} \sum_{l,k \in \T_{\J}} |\RR_{kl}^{(\J)}|^2 \le \frac{1}{v} \imag m_n^{(\J)}(z).
\end{equation}
For any $l \in \T_{\J}$
\begin{equation}\label{appendix lemma resolvent inequality 2}
\sum_{k \in \T_{\J}} |\RR_{kl}^{(\J)}|^2 \le \frac{1}{v} \imag \RR_{ll}^{(\J)}.
\end{equation}
\end{lemma}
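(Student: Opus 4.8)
The plan is to reduce both displays to a single exact matrix identity, namely
\[
\imag \RR^{(\J)}(z) \;=\; v\,\bigl(\RR^{(\J)}(z)\bigr)^{*}\,\RR^{(\J)}(z),
\]
valid for every $z=u+iv$ with $v>0$ (where $\imag M:=(M-M^{*})/(2i)$ for a matrix $M$). To prove this identity I would first observe that, since $\W^{(\J)}$ is Hermitian,
\[
\bigl(\RR^{(\J)}(z)\bigr)^{*}=\bigl((\W^{(\J)}-z\II)^{-1}\bigr)^{*}=\bigl((\W^{(\J)})^{*}-\bar z\II\bigr)^{-1}=(\W^{(\J)}-\bar z\II)^{-1}=\RR^{(\J)}(\bar z).
\]
Then I would apply the resolvent identity $A^{-1}-B^{-1}=B^{-1}(B-A)A^{-1}$ with $A=\W^{(\J)}-z\II$ and $B=\W^{(\J)}-\bar z\II$, so that $B-A=(z-\bar z)\II=2iv\,\II$ and $A^{-1}=\RR^{(\J)}(z)$, $B^{-1}=\RR^{(\J)}(\bar z)$. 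This gives
\[
\RR^{(\J)}(z)-\RR^{(\J)}(\bar z)=\RR^{(\J)}(\bar z)\,(2iv\,\II)\,\RR^{(\J)}(z)=2iv\,\bigl(\RR^{(\J)}(z)\bigr)^{*}\RR^{(\J)}(z),
\]
and since the left-hand side equals $2i\,\imag\RR^{(\J)}(z)$, dividing by $2i$ yields the identity.

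Next I would read off \eqref{appendix lemma resolvent inequality 2} by taking the $(l,l)$ entry: the right-hand side of the identity equals
\[
v\,\bigl[\bigl(\RR^{(\J)}\bigr)^{*}\RR^{(\J)}\bigr]_{ll}=v\sum_{k\in\T_{\J}}\overline{\RR^{(\J)}_{kl}}\,\RR^{(\J)}_{kl}=v\sum_{k\in\T_{\J}}|\RR^{(\J)}_{kl}|^{2},
\]
so $\sum_{k\in\T_{\J}}|\RR^{(\J)}_{kl}|^{2}=v^{-1}\imag\RR^{(\J)}_{ll}$, which is \eqref{appendix lemma resolvent inequality 2} (in fact with equality). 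For \eqref{appendix lemma resolvent inequality 1} I would take the trace of the identity instead: $\Tr\,\imag\RR^{(\J)}(z)=\imag\,\Tr\RR^{(\J)}(z)=n\,\imag m_n^{(\J)}(z)$ since $m_n^{(\J)}(z)=n^{-1}\Tr\RR^{(\J)}(z)$, while $\Tr\bigl(\bigl(\RR^{(\J)}\bigr)^{*}\RR^{(\J)}\bigr)=\sum_{l,k\in\T_{\J}}|\RR^{(\J)}_{kl}|^{2}$; hence $\sum_{l,k\in\T_{\J}}|\RR^{(\J)}_{kl}|^{2}=v^{-1}n\,\imag m_n^{(\J)}(z)$, and dividing by $n$ gives \eqref{appendix lemma resolvent inequality 1}.

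Both bounds are therefore exact identities, so there is no genuine obstacle; the only points requiring a moment's care are the use of Hermiticity of $\W^{(\J)}$ to identify $\bigl(\RR^{(\J)}(z)\bigr)^{*}$ with $\RR^{(\J)}(\bar z)$, and the normalization convention $m_n^{(\J)}=n^{-1}\Tr\RR^{(\J)}$ entering the trace step. An equivalent and perhaps more transparent route is to insert the spectral decomposition $\W^{(\J)}=\sum_{\alpha}\lambda_{\alpha}\uu_{\alpha}\uu_{\alpha}^{*}$: then $\sum_{k\in\T_\J}|\RR^{(\J)}_{kl}|^{2}=\sum_{\alpha}|\lambda_{\alpha}-z|^{-2}|u_{\alpha l}|^{2}$ and $\imag\RR^{(\J)}_{ll}=\sum_{\alpha}\imag(\lambda_{\alpha}-z)^{-1}|u_{\alpha l}|^{2}=v\sum_{\alpha}|\lambda_{\alpha}-z|^{-2}|u_{\alpha l}|^{2}$, from which \eqref{appendix lemma resolvent inequality 2} follows termwise and \eqref{appendix lemma resolvent inequality 1} by summing over $l\in\T_\J$ and dividing by $n$.
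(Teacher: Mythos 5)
Your argument is correct and is the standard one: the Ward identity $\imag\RR^{(\J)}(z)=v\,(\RR^{(\J)}(z))^{*}\RR^{(\J)}(z)$ (equivalently the spectral decomposition) immediately gives both displays, in fact as equalities rather than inequalities. The paper itself only cites \cite{GotzeNauTikh2015a}[Lemma~C.4], whose proof is the same resolvent/spectral computation, so this matches. The one small point worth noting: if $m_n^{(\J)}$ were normalized by $|\T_{\J}|=n-|\J|$ rather than by $n$, the trace step would produce the factor $(n-|\J|)/n\le 1$ and the first display would be a genuine inequality; with the paper's normalization $m_n^{(\J)}=n^{-1}\Tr\RR^{(\J)}$ (as you assumed) it is an equality, and either way the stated bound holds.
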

\begin{proof}
See~\cite{GotzeNauTikh2015a}[Lemma~C.4].
\end{proof}

\subsection{Moment inequalities for linear and quadratic forms in the sub-gaussian case} In this subsection we estimate the moments of $\varepsilon_{\nu j}^{(\J)}$ for $\nu = 1, ..., 4$ (recall the definition~\eqref{eq: representation for RR_jj}) in the sub-gaussian case. It is well-known that the random variables $\xi$ is sub-gaussian if and only if $\E |\xi|^p  = O(p^\frac{p}{2})$ as $p \rightarrow \infty$. We define the sub-gaussian norm of $\xi$ as
$$
\|\xi\|_{\psi_2}: = \sup_{p \geq 1} p^{-\frac12} \E^\frac{1}{p}|\xi|^p.
$$
We may conclude that $\E|\varepsilon_{1j}^{(\J)}|^p \le C p^\frac{p}{2} n^{-\frac{p}{2}}$.  The following lemma is the Hanson-Wright inequality for quadratic forms, see~\cite{HansonWright}. 
The following improved version is due to M. Rudelson and R. Vershynin~\cite{RudelsonVershynin2013}.

\begin{lemma}[Hanson-Wright inequality]\label{lem:  Hanson-Wright inequality} Let $X = (X_1, ..., X_n) \in \R^n$ be a random vector with independent components $X_i$ which satisfy $\E X_i = 0$ and $\|X\|_{\psi_2} \le K$. Let $\A = [a_{jk}]_{j,k=1}^n$ be an $n \times n$ matrix. Then, for every $t \geq 0$
$$
\Pb\left(\left|\sum_{j,k = 1}^n a_{jk} X_j X_k - \sum_{j=1}^n a_{jj} \E X_{jj}^2 \right| \geq t\right) \le 2 \exp \left[ -c \min\left(\frac{t^2}{K^4\|\A\|_2^2}, \frac{t}{K^2\|\A\|} \right) \right].
$$
\end{lemma}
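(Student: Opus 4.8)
The plan is to follow the now standard route of Rudelson and Vershynin~\cite{RudelsonVershynin2013}: normalise, split the quadratic form into its diagonal and off-diagonal parts, control the exponential moment (moment generating function) of each separately, and convert these into the stated two-regime tail bound by a Chernoff argument. First I would reduce to $K=1$ by replacing each $X_i$ with $X_i/K$ and $\A$ with $K^2\A$, which leaves the claimed inequality invariant, and then write $\A = \mathrm{diag}(\A) + \A_o$ with $\A_o$ having zero diagonal.

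For the diagonal contribution $\sum_j a_{jj}(X_j^2 - \E X_j^2)$, each summand $X_j^2-\E X_j^2$ is a centred sub-exponential random variable whose $\psi_1$-norm is bounded by an absolute constant because $\|X_j\|_{\psi_2}\le 1$; this is a sum of independent centred sub-exponential variables, so Bernstein's inequality gives $\Pb\big(\big|\sum_j a_{jj}(X_j^2-\E X_j^2)\big|\ge t\big) \le 2\exp\big(-c\min(t^2/\sum_j a_{jj}^2,\ t/\max_j|a_{jj}|)\big)$, which is dominated by the right-hand side of the lemma since $\sum_j a_{jj}^2 \le \|\A\|_2^2$ and $\max_j|a_{jj}|\le\|\A\|$.

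The heart of the matter is the off-diagonal part $S := \sum_{j\ne k} a_{jk}X_jX_k$, and I would bound $\E\exp(\lambda S)$ for small $|\lambda|$ in four steps. (a)~Decoupling: by the de la Pe\~na--Montgomery-Smith decoupling inequality, $\E\exp(\lambda S) \le \E\exp(4\lambda\langle X, \A X'\rangle)$, where $X'$ is an independent copy of $X$. (b)~Integrating out $X$ for fixed $X'$: since the coordinates of $X$ are independent, centred and sub-gaussian with norm $\le 1$, $\E_X\exp(4\lambda\langle X,\A X'\rangle) \le \exp(C\lambda^2\|\A X'\|_2^2)$. (c)~Gaussian linearisation of the square: using $\exp(\tfrac12\|v\|_2^2) = \E_g\exp(\langle g,v\rangle)$ for $g\sim\mathcal N(0,I_n)$, one rewrites $\exp(C\lambda^2\|\A X'\|_2^2) = \E_g\exp(\langle \sqrt{2C}\,\lambda\, g,\ \A X'\rangle)$, and after exchanging expectations (all integrands positive) and integrating out $X'$ (again a sub-gaussian linear form) one gets $\E_{X'}\exp(C\lambda^2\|\A X'\|_2^2) \le \E_g\exp(C'\lambda^2\|\A^{\mathsf T} g\|_2^2)$. (d)~Now $g$ is Gaussian, so by the singular value decomposition $\A = \sum_i s_i u_i v_i^{\mathsf T}$ and rotational invariance of $g$, $\|\A^{\mathsf T} g\|_2^2$ is distributed as $\sum_i s_i^2 Z_i^2$ with $Z_i$ i.i.d.\ standard normal; hence $\E_g\exp(C'\lambda^2\|\A^{\mathsf T} g\|_2^2) = \prod_i(1-2C'\lambda^2 s_i^2)^{-1/2} \le \exp(C''\lambda^2\sum_i s_i^2) = \exp(C''\lambda^2\|\A\|_2^2)$, valid whenever $2C'\lambda^2 s_i^2\le 1/2$ for all $i$, i.e.\ $|\lambda| \le c/\|\A\|$. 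Combining (a)--(d), $\E\exp(\lambda S)\le\exp(C''\lambda^2\|\A\|_2^2)$ for $|\lambda|\le c/\|\A\|$, and the same bound for $-S$.

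Finally I would feed this exponential-moment bound into a Chernoff argument: $\Pb(|S|\ge t)\le 2\exp(-\lambda t + C''\lambda^2\|\A\|_2^2)$, optimised over $0 \le \lambda\le c/\|\A\|$, which leads to $\lambda = \min(t/(2C''\|\A\|_2^2),\ c/\|\A\|)$ and hence $\Pb(|S|\ge t)\le 2\exp(-c'\min(t^2/\|\A\|_2^2,\ t/\|\A\|))$. Adding the diagonal and off-diagonal tail bounds (each applied at level $t/2$) and re-inserting the factors of $K$ from the normalisation gives the lemma. The main obstacle, and the only genuinely delicate point, is steps (c)--(d): the coordinates of $\A X'$ are not independent, so one cannot diagonalise the quadratic form directly; the Gaussian linearisation trick is precisely what lets the square be unfolded into a linear form and the awkward vector replaced by a Gaussian, after which rotational invariance makes everything explicit. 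Making sure the numerical constants remain genuinely absolute (uniform in $n$ and in $\A$) throughout the decoupling and the two nested expectations also requires some care.
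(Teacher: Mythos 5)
Your proof is correct and reproduces the Rudelson--Vershynin argument: the paper itself gives no proof of this lemma, only the citation to \cite{RudelsonVershynin2013}[Theorem~1.1], and your diagonal/off-diagonal split, decoupling, conditional sub-gaussian MGF bound, Gaussian linearisation of $\exp(C\lambda^2\|\A X'\|_2^2)$, rotational-invariance computation via the singular values, and two-regime Chernoff optimisation are exactly the steps in that reference. The only cosmetic difference is that Rudelson and Vershynin carry out the decoupling via Bernoulli selectors rather than by invoking the de~la~Pe\~na--Montgomery-Smith inequality, which does not change the argument.
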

\begin{proof}
See~\cite{RudelsonVershynin2013}[Theorem~1.1].
\end{proof}
A direct consequence of this lemma is the following result.
\begin{lemma}\label{appendix lemma varepsilon_2-3}
Assume that $X_1, ... , X_n$ are i.i.d. sub-gaussian random variables which satisfy $\E X_i = 0$ and $\|X\|_{\psi_2} \le K$. Then for all $p \geq 2$ there exists a positive constant $C$ depending on $K$ such that
$$
\E|\varepsilon_{2j}^{(\J)}+\varepsilon_{3j}^{(\J)}|^p \le \frac{C^p p^\frac{p}{2}} {(nv)^\frac{p}{2}} \E \imag^\frac{p}{2} m_n^{(\J,j)}(z) + \frac{C^p p^p}{(nv)^p}.
$$
\end{lemma}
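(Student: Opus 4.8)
The plan is to recognise $\varepsilon_{2j}^{(\J)}+\varepsilon_{3j}^{(\J)}$ as a centred quadratic form in the $j$-th row of $\X$ with matrix $\RR^{(\J,j)}$, and then to apply the Hanson--Wright inequality (Lemma~\ref{lem:  Hanson-Wright inequality}) conditionally on the remaining rows. Write $\vect{x}_j:=(X_{jk})_{k\in\T_{\J,j}}$ for the vector of off-diagonal entries of row $j$. Splitting the quadratic form in the denominator of~\eqref{eq: representation for RR_jj} into its diagonal and off-diagonal parts and using $\E X_{jk}^2=1$, the definitions of $\varepsilon_{2j}^{(\J)}$ and $\varepsilon_{3j}^{(\J)}$ give
\begin{equation*}
\varepsilon_{2j}^{(\J)}+\varepsilon_{3j}^{(\J)}\;=\;-\frac1n\Bigl(\vect{x}_j^{\mathsf T}\RR^{(\J,j)}\vect{x}_j-\Tr\RR^{(\J,j)}\Bigr).
\end{equation*}
The matrix $\RR^{(\J,j)}$ depends only on the entries $X_{st}$, $s,t\in\T_{\J,j}$, hence is independent of $\vect{x}_j$; it is moreover complex \emph{symmetric}, being the resolvent of the real symmetric matrix $\W^{(\J,j)}$. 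Writing $\RR^{(\J,j)}=\RR_1+i\RR_2$ with $\RR_1,\RR_2$ real symmetric and noting $\|\RR_s\|\le\|\RR^{(\J,j)}\|$, $\|\RR_s\|_2\le\|\RR^{(\J,j)}\|_2$ for $s=1,2$, it suffices to bound the two real centred quadratic forms $\vect{x}_j^{\mathsf T}\RR_s\vect{x}_j-\Tr\RR_s$.

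Conditioning on $\mathfrak M:=\sigma(X_{st}:s,t\in\T_{\J,j})$, each $\RR_s$ is deterministic, the coordinates of $\vect{x}_j$ are independent with sub-Gaussian norm at most $K$, and $\sum_k(\RR_s)_{kk}\E X_{jk}^2=\Tr\RR_s$; Lemma~\ref{lem:  Hanson-Wright inequality} therefore yields, for every $t\ge0$,
\begin{equation*}
\Pb\Bigl(\bigl|\vect{x}_j^{\mathsf T}\RR_s\vect{x}_j-\Tr\RR_s\bigr|\ge t\;\Big|\;\mathfrak M\Bigr)\;\le\;2\exp\!\left[-c\min\!\left(\frac{t^2}{K^4\|\RR^{(\J,j)}\|_2^2},\,\frac{t}{K^2\|\RR^{(\J,j)}\|}\right)\right].
\end{equation*}
Integrating this via $\E[|Z|^p\mid\mathfrak M]=\int_0^\infty pt^{p-1}\Pb(|Z|\ge t\mid\mathfrak M)\,dt$ and splitting the integral at $t_\ast=K^2\|\RR^{(\J,j)}\|_2^2/\|\RR^{(\J,j)}\|$ into the sub-Gaussian ($t\le t_\ast$) and sub-exponential ($t>t_\ast$) ranges, using $\Gamma(p/2+1)\le C^p p^{p/2}$ and $\Gamma(p+1)\le C^p p^{p}$, one obtains the conditional bound
\begin{equation*}
\E\Bigl[\bigl|\varepsilon_{2j}^{(\J)}+\varepsilon_{3j}^{(\J)}\bigr|^p\;\Big|\;\mathfrak M\Bigr]\;\le\;\frac{C^p}{n^p}\Bigl(p^{p/2}\|\RR^{(\J,j)}\|_2^{p}+p^{p}\|\RR^{(\J,j)}\|^{p}\Bigr),
\end{equation*}
with $C$ depending only on $K$.

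Finally I would take the expectation over $\mathfrak M$ and insert the deterministic resolvent estimates: $\|\RR^{(\J,j)}\|\le v^{-1}$, and by Lemma~\ref{appendix lemma resolvent inequalities 1} $\|\RR^{(\J,j)}\|_2^2=\sum_{k,l\in\T_{\J,j}}|\RR_{kl}^{(\J,j)}|^2\le (n/v)\,\imag m_n^{(\J,j)}(z)$, so that $\E\|\RR^{(\J,j)}\|_2^{p}\le (n/v)^{p/2}\,\E\,\imag^{p/2}m_n^{(\J,j)}(z)$. Substituting these into the previous display gives
\begin{equation*}
\E\bigl|\varepsilon_{2j}^{(\J)}+\varepsilon_{3j}^{(\J)}\bigr|^p\;\le\;\frac{C^p p^{p/2}}{(nv)^{p/2}}\,\E\,\imag^{p/2}m_n^{(\J,j)}(z)+\frac{C^p p^{p}}{(nv)^{p}},
\end{equation*}
which is the claimed inequality. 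The two points that need genuine care are the reduction of the complex symmetric resolvent to two real symmetric applications of Hanson--Wright, and the bookkeeping in the tail-to-moment integral that puts the exponents $p/2$ and $p$ on the two terms; beyond that the argument is a conditional concentration estimate followed by the elementary resolvent bound of Lemma~\ref{appendix lemma resolvent inequalities 1}, so I do not expect a real obstacle.
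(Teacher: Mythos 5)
Your proposal is correct and follows essentially the same route as the paper's proof: recognise $\varepsilon_{2j}^{(\J)}+\varepsilon_{3j}^{(\J)}$ as a centred quadratic form, apply the Hanson--Wright tail bound, convert tails to moments by splitting the integral at the crossover scale, and then invoke $\|\RR^{(\J,j)}\|\le v^{-1}$ together with Lemma~\ref{appendix lemma resolvent inequalities 1}. The only substantive difference is that you make explicit two points the paper leaves implicit: the conditioning on $\mathfrak M$ (the paper writes unconditional $\E$ on the left while the random resolvent norms remain inside the integrand), and the reduction of the complex-symmetric $\RR^{(\J,j)}$ to two real-symmetric applications of Hanson--Wright; both refinements are harmless and arguably tidy up the original argument.
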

\begin{proof}
Applying $\E|\xi|^p = p \int_{0}^\infty t^{p-1} \Pb (|\xi| \geq t) \, dt$ and Lemma~\ref{lem:  Hanson-Wright inequality}  we get
$$
\E|\varepsilon_{2j}^{(\J)}+\varepsilon_{3j}^{(\J)}|^p \le \frac{2 p}{n^p} \int_{0}^\infty t^{p-1} 2 \exp \left[ -c \min\left(\frac{t^2}{K^4\|\RR^{(\J,j)}\|_2^2}, \frac{t}{K^2\|\RR^{(\J,j)}\|} \right) \right] \, dt
$$
We may split the last integral in two integrals  over the regions $[0, \|\RR^{(\J,j)}\|_2^2 \|\RR^{(\J,j)}\|^{-1}]$ and $[\|\RR^{(\J,j)}\|_2^2 \|\RR^{(\J,j)}\|^{-1}, \infty]$ obtaining
\begin{align*}
\E|\varepsilon_{2j}^{(\J)}+\varepsilon_{3j}^{(\J)}|^p &\le \frac{2 p \|\RR^{(\J,j)}\|_2^p}{n^p}  \int_0^\infty t^p e^{-c \frac{t^2}{K^4}} \, dt + \frac{2 p \|\RR^{(\J,j)}\|^p}{n^p} \int_0^\infty t^p e^{-c \frac{t}{K^2}} \, dt \\
&\le \frac{C^p p^\frac{p}{2}} {(nv)^\frac{p}{2}} \E \imag^\frac{p}{2} m_n^{(\J,j)}(z) + \frac{C^p p^p}{(nv)^p},
\end{align*}
where in the second inequality we applied Lemma~\ref{appendix lemma resolvent inequalities 1} and $ \|\RR^{(\J,j)}\|^p \le v^{-p}$.
\end{proof}
It remains to estimate $\E |\varepsilon_{4j}^{(\J)}|^p$. The bound for this term is distribution-independent and given in the following lemma.
\begin{lemma}\label{appendix lemma varepsilon_4}
For $p \geq 2$ we have
$$
\E |\varepsilon_{4j}^{(\J)}|^p \le \frac{1}{(nv)^p}.
$$
\end{lemma}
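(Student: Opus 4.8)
The plan is to establish the deterministic, distribution-free bound $|\varepsilon_{4j}^{(\J)}|\le (nv)^{-1}$, after which raising to the $p$-th power and taking expectations gives the assertion at once. Since $\varepsilon_{4j}^{(\J)} = \tfrac1n\bigl(\Tr\RR^{(\J)}(z)-\Tr\RR^{(\J,j)}(z)\bigr)$, everything reduces to proving
$$
\bigl|\Tr\RR^{(\J)}(z)-\Tr\RR^{(\J,j)}(z)\bigr|\le\frac1v .
$$

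First I would record the Cramer/Schur-complement identity: for any Hermitian matrix $M$ and any index $j$ one has $[(M-z\II)^{-1}]_{jj}=\det(M_{\widehat j}-z\II)/\det(M-z\II)$, where $M_{\widehat j}$ denotes $M$ with row and column $j$ removed. Applied with $M=\W^{(\J)}$ this reads $\RR^{(\J)}_{jj}(z)=\det(\W^{(\J,j)}-z\II)/\det(\W^{(\J)}-z\II)$. Taking the logarithmic derivative in $z$ and using $\partial_z\log\det(M-z\II)=-\Tr(M-z\II)^{-1}$ yields the key identity
$$
\Tr\RR^{(\J)}(z)-\Tr\RR^{(\J,j)}(z)=\frac{\partial_z\RR^{(\J)}_{jj}(z)}{\RR^{(\J)}_{jj}(z)} .
$$
(Equivalently this follows from Cauchy interlacing of the spectra of $\W^{(\J)}$ and $\W^{(\J,j)}$; note also that $\RR^{(\J)}_{jj}(z)$ is the Stieltjes transform of the spectral measure of $\W^{(\J)}$ at $\ee_j$, a probability measure, so $\RR^{(\J)}_{jj}(z)\neq0$ and $\imag\RR^{(\J)}_{jj}(z)>0$ for $v>0$, which makes the right-hand side well defined.)

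It then remains to bound the two factors. Since $\partial_z(\W^{(\J)}-z\II)^{-1}=(\W^{(\J)}-z\II)^{-2}$ and $\RR^{(\J)}$ is symmetric (recall $\X$ is real symmetric), $\partial_z\RR^{(\J)}_{jj}(z)=[(\RR^{(\J)})^2]_{jj}=\sum_{k\in\T_{\J}}(\RR^{(\J)}_{jk})^2$, whence, by the triangle inequality and inequality \eqref{appendix lemma resolvent inequality 2} of Lemma~\ref{appendix lemma resolvent inequalities 1},
$$
|\partial_z\RR^{(\J)}_{jj}(z)|\le\sum_{k\in\T_{\J}}|\RR^{(\J)}_{jk}(z)|^2\le\frac1v\,\imag\RR^{(\J)}_{jj}(z).
$$
On the other hand $|\RR^{(\J)}_{jj}(z)|\ge\imag\RR^{(\J)}_{jj}(z)>0$. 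Dividing, the common factor $\imag\RR^{(\J)}_{jj}(z)$ cancels and we obtain $|\Tr\RR^{(\J)}(z)-\Tr\RR^{(\J,j)}(z)|\le 1/v$, hence $|\varepsilon_{4j}^{(\J)}|\le (nv)^{-1}$ and finally $\E|\varepsilon_{4j}^{(\J)}|^p\le(nv)^{-p}$.

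There is essentially no serious obstacle here; the only point requiring a line of care is the well-definedness of the quotient identity (the non-vanishing of $\RR^{(\J)}_{jj}(z)$ off the real axis), which is guaranteed by the observation that it is the Stieltjes transform of a probability measure, together with the correct use of the symmetry $\RR^{(\J)}_{kj}=\RR^{(\J)}_{jk}$ in identifying $\partial_z\RR^{(\J)}_{jj}$ with $[(\RR^{(\J)})^2]_{jj}$.
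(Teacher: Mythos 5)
Your proof is correct. The identity $\Tr\RR^{(\J)}-\Tr\RR^{(\J,j)}=\partial_z\RR^{(\J)}_{jj}/\RR^{(\J)}_{jj}$ (Cramer plus Jacobi, or equivalently the Schur-complement formula $\RR^{(\J)}_{ab}=\RR^{(\J,j)}_{ab}+\RR^{(\J)}_{aj}\RR^{(\J)}_{jb}/\RR^{(\J)}_{jj}$ summed over $a=b$), combined with the Ward identity $\sum_k|\RR^{(\J)}_{jk}|^2=v^{-1}\imag\RR^{(\J)}_{jj}$ and $|\RR^{(\J)}_{jj}|\ge\imag\RR^{(\J)}_{jj}$, gives the deterministic bound $|\varepsilon^{(\J)}_{4j}|\le(nv)^{-1}$; this is the standard argument, and it is the one the cited Lemma~A.9 of~\cite{GotzeNauTikh2015a} also relies on.
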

\begin{proof}
See~\cite{GotzeNauTikh2015a}[Lemma~A.9].
\end{proof}
In order to bound $\E \imag^p \RR_{jj}$ we also need the following lemma.
\begin{lemma}\label{appendix lemma imag varepsilon_2-3}
Assume that $X_1, ... , X_n$ are i.i.d. sub-gaussian random variables which satisfy $\E X_i = 0$ and $\|X\|_{\psi_2} \le K$. Then for all $p \geq 2$ there exists a  positive constant $C$ depending on $K$ such that
$$
\E|\imag \varepsilon_{2j}^{(\J)}+\imag\varepsilon_{3j}^{(\J)}|^p \le \frac{C^p p^\frac{p}{2}} {(nv)^\frac{p}{2}} \E \imag^\frac{p}{2} m_n^{(\J,j)}(z) + \frac{C^p p^p}{(nv)^p}.
$$
\end{lemma}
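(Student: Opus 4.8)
The plan is to run the proof of Lemma~\ref{appendix lemma varepsilon_2-3} with the resolvent $\RR^{(\J,j)}$ replaced everywhere by its imaginary part $\imag \RR^{(\J,j)}$. First I would condition on the $\sigma$-algebra $\mathfrak M^{(j)}$ generated by all entries of $\W$ lying outside the $j$-th row. Then $\RR^{(\J,j)}$ is $\mathfrak M^{(j)}$-measurable, the variables $X_{jk}$, $k \in \T_{\J,j}$, are i.i.d., centered, of unit variance and sub-gaussian with norm at most $K$, and
$$
\imag\big(\varepsilon_{2j}^{(\J)}+\varepsilon_{3j}^{(\J)}\big)
= -\frac1n\Big(\sum_{k,l \in \T_{\J,j}} X_{jk} X_{jl}\, \imag \RR_{kl}^{(\J,j)} - \Tr \imag \RR^{(\J,j)}\Big),
$$
since $\imag\big(\sum_{k,l} X_{jk}X_{jl}\RR_{kl}^{(\J,j)}\big)=\sum_{k,l}X_{jk}X_{jl}\,\imag\RR_{kl}^{(\J,j)}$ for real $X_{jk}$ and $\E[X_{jk}X_{jl}\mid\mathfrak M^{(j)}]=\delta_{kl}$, so the subtracted conditional mean is $\Tr\imag\RR^{(\J,j)}=n\,\imag m_n^{(\J,j)}(z)$. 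The crucial structural observation is that $\imag\RR^{(\J,j)}=v\,\RR^{(\J,j)}(\RR^{(\J,j)})^{*}$ is a real symmetric positive semi-definite matrix, so the displayed quantity is a genuine centered quadratic form, and the Hanson--Wright inequality (Lemma~\ref{lem: Hanson-Wright inequality}) applies conditionally on $\mathfrak M^{(j)}$ with matrix $n^{-1}\imag\RR^{(\J,j)}$.

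Next I would record the two norm inputs: $\|\imag\RR^{(\J,j)}\| \le \|\RR^{(\J,j)}\| \le v^{-1}$, and $\|\imag\RR^{(\J,j)}\|_2^2 \le \|\RR^{(\J,j)}\|_2^2 \le \frac n v \imag m_n^{(\J,j)}(z)$, the last bound being exactly \eqref{appendix lemma resolvent inequality 1} of Lemma~\ref{appendix lemma resolvent inequalities 1}; both hold surely. As in the proof of Lemma~\ref{appendix lemma varepsilon_2-3}, I would then use $\E[|\xi|^p\mid\mathfrak M^{(j)}]=p\int_0^\infty t^{p-1}\Pb(|\xi|\ge t\mid\mathfrak M^{(j)})\,dt$ for $\xi=\imag(\varepsilon_{2j}^{(\J)}+\varepsilon_{3j}^{(\J)})$, split the integral at the point $t_\ast=\|\imag\RR^{(\J,j)}\|_2^2/(n\|\imag\RR^{(\J,j)}\|)$ where the sub-gaussian and sub-exponential parts of the Hanson--Wright bound interchange, and evaluate the two resulting Gamma integrals. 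This yields the conditional bound
$$
\E\big[\,|\imag(\varepsilon_{2j}^{(\J)}+\varepsilon_{3j}^{(\J)})|^p\,\big|\,\mathfrak M^{(j)}\big]
\le \frac{C^p p^{p/2}}{n^p}\,\|\imag\RR^{(\J,j)}\|_2^{\,p} + \frac{C^p p^{p}}{n^p}\,\|\imag\RR^{(\J,j)}\|^{\,p},
$$
with $C=C(K)$.

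Finally I would substitute $\|\imag\RR^{(\J,j)}\|_2^{p}\le (n/v)^{p/2}\,\imag^{p/2} m_n^{(\J,j)}(z)$ and $\|\imag\RR^{(\J,j)}\|^{p}\le v^{-p}$ and take the overall expectation, obtaining
$$
\E\big|\imag\varepsilon_{2j}^{(\J)}+\imag\varepsilon_{3j}^{(\J)}\big|^p
\le \frac{C^p p^{p/2}}{(nv)^{p/2}}\,\E\,\imag^{p/2} m_n^{(\J,j)}(z) + \frac{C^p p^{p}}{(nv)^p},
$$
which is the assertion.

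There is no serious obstacle: the argument follows the proof of Lemma~\ref{appendix lemma varepsilon_2-3} essentially verbatim. The single point deserving care, and the only genuine difference, is checking that $\imag\RR^{(\J,j)}$ is real symmetric and positive semi-definite — this is what makes the conditional mean of the quadratic form equal $\Tr\imag\RR^{(\J,j)}$ and, more importantly, what lets the Hilbert--Schmidt norm be controlled by $\imag m_n^{(\J,j)}(z)$ rather than by the crude $O(v^{-1})$ bound, producing the favourable factor $\imag^{p/2} m_n^{(\J,j)}(z)$ in the first term.
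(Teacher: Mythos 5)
Your proof is correct and fills in exactly the details the paper omits (the paper's own proof of this lemma reads simply ``The proof is similar to the proof of the previous Lemma. We omit the details.''), so it takes the same route. One small correction to your closing remark: neither the conditional centering at $\Tr\imag\RR^{(\J,j)}$ (which only needs $\E[X_{jk}X_{jl}]=\delta_{kl}$) nor the Hilbert--Schmidt bound (which only needs the entrywise inequality $|\imag\RR_{kl}^{(\J,j)}|\le|\RR_{kl}^{(\J,j)}|$ followed by Lemma~\ref{appendix lemma resolvent inequalities 1}) actually uses positive semi-definiteness; what the identity $\imag\RR^{(\J,j)}=v\,\RR^{(\J,j)}(\RR^{(\J,j)})^{*}$ genuinely buys you is that the quadratic-form matrix is real and symmetric, so Lemma~\ref{lem:  Hanson-Wright inequality} applies verbatim rather than after splitting a complex matrix into its real and imaginary parts as is implicitly needed in Lemma~\ref{appendix lemma varepsilon_2-3}.
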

\begin{proof}
The proof is similar to the proof of the previous Lemma~\ref{appendix lemma varepsilon_2-3}. We omit the details.
\end{proof}

\subsection{Moment inequalities for off-diagonal entries of  the resolvent}
\begin{lemma} \label{l: off diagonal entries}
Assume that the conditions $\CondTwo$ hold. Then for all $1 \le j < k \le n$ and $1 \le q \le 2 + \frac{\delta}{2}$ there exists a positive constant $C$ such that
$$
\E |\RR_{jk}|^q \le \frac{C}{(nv)^\frac{q}{2}}.
$$
\end{lemma}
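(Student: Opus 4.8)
The plan is to start from the Schur-type identity for an off-diagonal resolvent entry already recorded in the proof of Lemma~\ref{lemma step for resolvent} (there with a general set $\J$; here I take $\J=\emptyset$):
\begin{equation*}
\RR_{jk} = -\frac{1}{\sqrt n}\sum_{l\in\T_j}X_{jl}\,\RR^{(j)}_{lk}\,\RR_{jj}.
\end{equation*}
The column $(X_{jl})_{l\in\T_j}$ is independent of $\W^{(j)}$, hence of $\RR^{(j)}$, but \emph{not} of $\RR_{jj}$. So the first step is to peel off $\RR_{jj}$ by H\"older's inequality,
\begin{equation*}
\E|\RR_{jk}|^q\le n^{-q/2}\Bigl(\E\bigl|\textstyle\sum_{l\in\T_j}X_{jl}\RR^{(j)}_{lk}\bigr|^{2q}\Bigr)^{1/2}\bigl(\E|\RR_{jj}|^{2q}\bigr)^{1/2},
\end{equation*}
and then bound the last factor by $\E|\RR_{jj}|^{2q}\le C_0^{2q}$ via Lemma~\ref{main lemma non optimal}; this is legitimate once $2q\le 4+\delta$ lies in the admissible moment range of that lemma, which in the regime $v\ge A_0n^{-1}$ is guaranteed upon enlarging $A_0$ (I would make this restriction on $v$ explicit in the statement).

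For the quadratic factor I would condition on $\mathfrak M_j:=\sigma\{X_{ab}:a,b\in\T_j\}$. Given $\mathfrak M_j$, the matrix $\RR^{(j)}$ is deterministic and $(X_{jl})_{l\in\T_j}$ are i.i.d., centered, with $\E|X_{jl}|^{2q}\le\mu_{2q}\le\mu_{4+\delta}<\infty$ — and it is exactly this finiteness that forces the hypothesis $q\le 2+\delta/2$. The moment inequality for linear forms (\cite{GotzeNauTikh2015a}[Theorem~A.1]) then gives
\begin{equation*}
\E\Bigl[\bigl|\textstyle\sum_{l\in\T_j}X_{jl}\RR^{(j)}_{lk}\bigr|^{2q}\,\big|\,\mathfrak M_j\Bigr]\le C^q\Bigl[\bigl(\textstyle\sum_{l\in\T_j}|\RR^{(j)}_{lk}|^2\bigr)^{q}+\mu_{2q}\textstyle\sum_{l\in\T_j}|\RR^{(j)}_{lk}|^{2q}\Bigr].
\end{equation*}
Since $q\ge 1$ one has $\sum_l|\RR^{(j)}_{lk}|^{2q}\le(\sum_l|\RR^{(j)}_{lk}|^2)^q$, and by Lemma~\ref{appendix lemma resolvent inequalities 1}, $\sum_{l\in\T_j}|\RR^{(j)}_{lk}|^2\le v^{-1}\imag\RR^{(j)}_{kk}$, so the conditional expectation is at most $C^q v^{-q}(\imag\RR^{(j)}_{kk})^q$. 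Taking expectations and using $\E(\imag\RR^{(j)}_{kk})^q\le\E|\RR^{(j)}_{kk}|^q\le C_0^q$ (Lemma~\ref{main lemma non optimal}, applied to the sub-matrix $\W^{(j)}$) yields $\E\bigl|\sum_{l}X_{jl}\RR^{(j)}_{lk}\bigr|^{2q}\le(CC_0)^q v^{-q}$.

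Combining the two bounds,
\begin{equation*}
\E|\RR_{jk}|^q\le n^{-q/2}\bigl((CC_0)^q v^{-q}\bigr)^{1/2}C_0^q=\frac{\bigl(C^{1/2}C_0^{3/2}\bigr)^q}{(nv)^{q/2}}\le\frac{C}{(nv)^{q/2}},
\end{equation*}
the last inequality because $q$ is bounded by $2+\delta/2$; all constants depend only on $\delta$ and $\mu_{4+\delta}$.

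I do not expect a serious obstacle here. The one point that needs attention is that one must use the \emph{non-trivial} bound $\E(\imag\RR^{(j)}_{kk})^q\le C_0^q$ rather than the trivial $\imag\RR^{(j)}_{kk}\le v^{-1}$: the latter would produce the weaker $n^{-q/2}v^{-q}$ instead of the claimed $(nv)^{-q/2}$. The remaining care is purely bookkeeping — matching the moment order $2q$ both with the hypothesis $\mu_{4+\delta}<\infty$ in the linear-form inequality and with the admissible range of Lemma~\ref{main lemma non optimal}, which I would align by restricting to $v\ge A_0 n^{-1}$ with $A_0$ large (or, equivalently, to $n$ large enough that $A_1(nv)^\kappa\ge 4+\delta$).
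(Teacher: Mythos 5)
Your proposal is correct and follows essentially the same route as the paper's proof: the same Schur identity for $\RR_{jk}$, the same H\"older split to peel off $\RR_{jj}$, a Rosenthal-type moment inequality for the conditioned linear form, then Lemma~\ref{appendix lemma resolvent inequalities 1} and Lemma~\ref{main lemma non optimal} to close. The paper keeps the $q^{q/2}$, $q^q$ factors from Rosenthal explicit and drops them only at the end, while you absorb them into $C^q$ at once since $q\le 2+\delta/2$; this is the only (cosmetic) difference.
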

\begin{proof}
Consider the following equality
\begin{equation*}
\RR_{jk}=-\frac1{\sqrt n}\left(\sum_{l\in \T_j} X_{jl} \RR^{(j)}_{lk}\right)\RR_{jj}.
\end{equation*}
Applying H{\"o}lder's inequality, we get
\begin{equation*}
\E|\RR_{jk}|^q\le n^{-\frac q2} \E^{\frac12}\left|\sum_{l\in \T_j} X_{jl} \RR^{(j)}_{lk}\right|^{2q}\E^{\frac12}|\RR_{jj}|^{2q}.
\end{equation*}
Conditioning, applying Rosenthal's inequality and Lemma~\ref{main lemma non optimal} we obtain
\begin{equation*}
\E|\RR_{jk}|^q\le C^q n^{-\frac q2}\left(q^{\frac q2}\E^{\frac12}\left(\sum_{l\in\T_j}|\RR^{(j)}_{lk}|^2\right)^{q}+q^{q}\mu_{2q}^{\frac12}\E^{\frac12}\left(\sum_{l\in\T_j}|\RR^{(j)}_{lk}|^{2q}\right)\right).
\end{equation*}
This inequality and Lemma~\ref{appendix lemma resolvent inequalities 1} together imply that
\begin{equation*}
\E|\RR_{jk}|^q\le C^q q^{\frac q2}(nv)^{-\frac q2}+q^q\mu_{2q}^{\frac12}(nv)^{-\frac q2}.
\end{equation*}
Hence, for $1 \le q \le 2+\frac{\delta }2$ we get 
\begin{equation*}
\E|\RR_{jk}|^q \le C(nv)^{-\frac{q}{2}}.
\end{equation*}
\end{proof}

\bibliographystyle{plain}
\bibliography{literatur}

\end{document}